\newtheorem{theorem}{Theorem}[section]
\newtheorem{definition}[theorem]{Definition}
\newtheorem{lemma}[theorem]{Lemma}
\newtheorem{remark}[theorem]{Remark}
\newenvironment{proof}[1][Proof]{\noindent\textbf{#1.} }{\ \rule{0.5em}{0.5em}}
\begin{document}

\title{}
\author{}
\date{}
\maketitle

\renewcommand{\theequation}{\arabic{section}.\arabic{equation}}

\bigskip

\begin{center}
{\Large The} ${\LARGE H}^{\infty }${\Large -control problem for parabolic
systems with singular Hardy potentials}

\bigskip

Gabriela Marinoschi

\medskip

\textquotedblleft Gheorghe Mihoc-Caius Iacob\textquotedblright\ Institute of
Mathematical Statistics and

Applied Mathematics of the Romanian Academy,

Calea 13 Septembrie 13, Bucharest, Romania

\bigskip
\end{center}

\noindent Abstract. We solve the $H^{\infty }$-control problem with state
feedback for infinite dimensional boundary control systems of parabolic type
with distributed disturbances and apply the results to equations with Hardy
potentials with the singularity inside or on the boundary, in the cases of a
distributed control and of a boundary control.

\medskip

Keywords: $H^{\infty }$-control, feedback control, robust control, abstract
parabolic problems, Hardy potentials

\medskip

MSC 2020: 93B36, 93B52, 93B35, 35K90

\section{Introduction\label{Intro}}

\setcounter{equation}{0}

The $H^{\infty }$-control is a technique used in control theory to design
robust stabilizing feedback controllers that force a system to achieve
stability with a prescribed performance even if the system output may be
corrupted by perturbations. This method involves a transfer function which
incorporates the effects of the input perturbations towards the output
observation. The aim is to determine the optimal feedback controller which
minimizes the effect of these perturbations on the output, by ensuring that
the $L^{2}$-norm of the transfer function is smaller that the $L^{2}$-norm
of the perturbation with a certain prescribed bound. This turns out in
finding a suboptimal control solution constructed by means of a mathematical
optimization problem. The formal $H^{\infty }$-control theory was initiated
by Zames in \cite{Zames}, as an optimization problem with an operator norm,
in particular, the $H^{\infty }$-norm. State space formulations were
initially developed in \cite{Glover} and \cite{Packard-Doyle} and continued
later by the formulation of the necessary and sufficient conditions for the
existence of an admissible controller in terms of solutions of algebraic
Riccati equations. The state-space approach for linear infinite-dimensional $%
H^{\infty }$-control problems was developed in further works and we cite
here e.g., \cite{VB-H-92}, \cite{VB-H-95-hyp}, \cite{VB-H-95}, \cite{Bas-Ber}%
, \cite{Opmeer-Staffans-2010}, \cite{Opmeer-Staffans-2019}, \cite{Pri-Tow}, 
\cite{vanK-93}, \cite{vanK-al-93}, \cite{vanS-91}, \cite{vaS-92}, \cite{VB-S}%
, the last for Navier-Stokes equations.

In this paper we discuss the $H^{\infty }$-control problem for linear
infinite dimensional systems of parabolic type and give applications for
equations with singular Hardy potentials, of the type $\frac{\lambda }{%
|x|^{2}},$ which as far as we know is a novel approach. Following the papers 
\cite{VB-H-92}, \cite{VB-H-95-hyp}, \cite{VB-H-95}, where the $H^{\infty }$%
-control abstract problem was solved with assumptions proper for the
hyperbolic case, we prove here a main result stating the formulation of the $%
H^{\infty }$-control problem in the parabolic case, relying on appropriate
assumptions for parabolic operators. This is further applied to three
parabolic control systems with Hardy potentials and with distributed or
boundary controls. There is an extensive literature on Hardy-type
inequalities with the singularity located inside the domain or on the
boundary, focusing also on controllability studies (see e.g., \cite{Ervedoza}%
, \cite{Fragnelli-Mugnai}, \cite{Vancostenoble-Zuazua}). Besides the high
mathematical interest in such singular equations revealed in the past
decades, a parabolic operator with a Hardy potential term describes a
non-standard growth condition which may affect the behavior of the solutions
to diffusive physical models, as for example of heat transfer or diffusion
of contaminants in fluids. Also, it may represent an equivalent formulation
of a system of two equations in which a state in one equation is represented
as a fundamental solution by the other one. Operators with other similar
potentials can arise for example in quantum mechanics, \cite{Baras-Goldstein}
or in combustion theory, \cite{Bebernes}, \cite{Fragnelli-Mugnai}. Linear
parabolic equations with Hardy potentials have been studied in connection
with stationary Schr\"{o}dinger equations $-\Delta y+V(x)y+E(x)y=f$ with the
singular potential $V\in L^{\infty }(\Omega \backslash x_{0})$ arising from
the uncertainty principle. The robust stabilization of the corresponding
dynamic control system $y_{t}-\Delta y+V(x)y+E(x)y=B_{1}w+B_{2}u$, via the $%
H^{\infty }$-control method, with the control $u$ and the exogenous
perturbation $w$ has direct implication for the equilibrium solution to the
above Schr\"{o}dinger equation. The content of the paper is briefly
described below.

In Section \ref{Prel} we present the mathematical formulation of the $%
H^{\infty }$-control problem. In Section \ref{Main}, after specifying the
work hypotheses we provide the main result stating the existence of the
feedback controller determined via a Riccati equation. In Sections \ref%
{Distributed} and \ref{Boundary} there are given applications for parabolic
equations in the $N$-dimensional case with a distributed control and a
boundary control, respectively, and with Hardy potentials with interior
singularity, while in Section \ref{1D} it is treated the $1$-$D$ case with a
boundary singular Hardy potential.

\section{Problem presentation and preliminaries\label{Prel}}

\setcounter{equation}{0}

In this section we briefly explain the state-space approach of the $%
H^{\infty }$-control problem for the linear system%
\begin{eqnarray}
&&y^{\prime }(t)=Ay(t)+B_{1}w(t)+B_{2}u(t),\mbox{ \ }t\in \mathbb{R}%
_{+}:=(0,+\infty )  \label{1} \\
&&z(t)=C_{1}y(t)+D_{1}u(t),\mbox{ }t\in \mathbb{R}_{+},  \label{2} \\
&&y(0)=y_{0},  \label{1'}
\end{eqnarray}%
where $A,$ $B_{1},$ $B_{2},$ $C_{1},$ $D_{1}$ are linear operators
satisfying hypotheses that will be immediately specified. Here, $y$ is the
system state, $u$ is the control input, $w$ is an exogenous input, or an
unknown perturbation and $z$ is the performance output.

At this point we put down a few notation, definitions and results necessary
for explaining the problem. Let $X$ be a real Hilbert space with the scalar
product and norm denoted by $(\cdot ,\cdot )_{X}$ and $\left\Vert \cdot
\right\Vert _{X},$ respectively and $X^{\prime }$ is its dual. The symbol $%
\left\langle \cdot ,\cdot \right\rangle _{X^{\prime },X}$ is the pairing
between $X^{\prime }$ and $X.$ Let $A$ be a linear closed operator on $X$
with the domain $D(A):=\{y\in X;$ $Ay\in X\}$ dense in $X.$ By $A^{\ast }$
we denote the the adjoint of $A.$ If $Y$ is another Hilbert space, $L(X,Y)$
represent the space of all linear continuous operators from $X$ to $Y.$

Let $H,$ $U,$ $W,$ $Z$ be real Hilbert spaces identified with their duals.
For the beginning we assume:

\begin{itemize}
\item[$(i_{1})$] $A$ is the infinitesimal generator of an analytic $C_{0}$%
-semigroup $e^{At}$ on the Hilbert space $H,$ $e^{At}$ is compact for $t>0,$
and 
\begin{equation}
B_{1}\in L(W,H),\mbox{ }B_{2}\in L(U,(D(A^{\ast }))^{\prime }),\mbox{ }%
C_{1}\in L(H,Z),\mbox{ }D_{1}\in L(U,Z).  \label{1i}
\end{equation}%
Here, $(D(A^{\ast }))^{\prime }$ is the dual of the domain of $A^{\ast }$,
where $D(A^{\ast })$ is organized as a Hilbert space with the scalar product 
$(y_{1},y_{2})_{D(A^{\ast })}=(A^{\ast }y_{1},A^{\ast
}y_{2})_{H}+(y_{1},y_{2})_{H}$ for $y_{1},$ $y_{2}\in D(A^{\ast }).$
\end{itemize}

\noindent We note that the space $(D(A^{\ast }))^{\prime }$ is the
completion of $H$ in the norm $\left\vert \left\Vert y\right\Vert
\right\vert =\left\Vert (A-\lambda _{0}I)^{-1}y\right\Vert _{H},$ $\lambda
_{0}\in \rho (A).$ Also, we define the extension of the operator $A$ from $H$
to $(D(A^{\ast }))^{\prime },$ denoted for convenience still by $A,$ by 
\begin{equation}
\left\langle Ay,\psi \right\rangle _{(D(A^{\ast }))^{\prime },D(A)}=\left(
y,A^{\ast }\psi \right) _{H},\mbox{ for }y\in H,\mbox{ }\psi \in D(A^{\ast
}).  \label{401}
\end{equation}%
We shall work with both operators and if not seen clearly from the context
which operator is used, we shall specify this.

\noindent Let us consider the uncontrolled system $y^{\prime }(t)=Ay(t),$ $%
t\in \mathbb{R}_{+},$ $y(0)=y_{0},$ with $A$ the infinitesimal generator of
a $C_{0}$-semigroup on $H.$

\begin{definition}
\textrm{The operator $A$ generates an exponentially stable semigroup $e^{At}$
if 
\begin{equation}
\left\Vert e^{At}\right\Vert _{L(H,H)}\leq Ce^{-\alpha t},\mbox{ for all }%
t\geq 0,  \label{4}
\end{equation}%
where $\alpha $ and $C$ are positive constants. }
\end{definition}

\noindent Relation (\ref{4}) still reads%
\begin{equation}
\left\Vert e^{At}y\right\Vert _{H}\leq Ce^{-\alpha t}\left\Vert y\right\Vert
_{H},\mbox{ for all }y\in H\mbox{ and all }t\geq 0.  \label{5}
\end{equation}

\noindent Moreover, a result of Datko (see \cite{Datko}) asserts that
relation (\ref{5}) is equivalent to 
\begin{equation}
\int_{0}^{\infty }\left\Vert y(t)\right\Vert _{H}^{2}dt<\infty .  \label{6}
\end{equation}

\begin{definition}
\textrm{The pair $(A,C_{1})$ in system (\ref{1})-(\ref{2}) is exponentially
detectable if there exists $K\in L(Z,H)$ such that $A+KC_{1}$ generates an
exponentially stable semigroup. }
\end{definition}

In order to state our $H^{\infty }$-control problem, we recall some issues
about such a problem. Assume that under certain conditions system (\ref{1})-(%
\ref{1'}) has a mild solution $y\in C([0,T];H)$ for all $T>0$ and $u$ can be
represented as a feedback controller $u=Fy,$ where generally $F:U\rightarrow
H$ is a linear closed and densely defined operator. Then, the solution $%
(y(t),z(t))$ becomes dependent only on $w(t)$ and reads 
\begin{eqnarray}
&&y(t)=e^{(A+B_{2}F)t}y_{0}+\displaystyle%
\int_{0}^{t}e^{(A+B_{2}F)(t-s)}B_{1}w(s)ds,\ t\in \lbrack 0,\infty ),
\label{7} \\
&&z(t)=(C_{1}+D_{1}F)e^{(A+B_{2}F)t}y_{0}+(C_{1}+D_{1}F)\displaystyle%
\int_{0}^{t}e^{(A+B_{2}F)(t-s)}B_{1}w(s)ds.  \label{8}
\end{eqnarray}%
The latter equation can be still written 
\begin{equation}
z(t)=f_{0}(t)+(G_{F}w)(t),\mbox{ }t\geq 0  \label{8-1}
\end{equation}%
where $f_{0}(t)=(C_{1}+D_{1}F)e^{(A+B_{2}F)t}y_{0}\in Z,$ $t\geq 0,$ and $%
G_{F}:L^{2}(\mathbb{R}_{+},W)\rightarrow L^{2}(\mathbb{R}_{+},Z),$ defined by%
\begin{equation}
(G_{F}w)(t)=(C_{1}+D_{1}F)\int_{0}^{t}e^{(A+B_{2}F)(t-s)}B_{1}w(s)ds\in Z,%
\mbox{ }t\geq 0,  \label{8-2}
\end{equation}%
shows the transfer of the influence of the perturbation input $w$ to the
output. Roughly speaking, the $H^{\infty }$-control problem means to find a
feedback controller which stabilizes exponentially the system (with $%
y_{0}=0) $, with a certain specified performance for the output $G_{F}w$,
depending on a given constant $\gamma .$ Such a feedback control $F$ is
called a suboptimal solution and the $H^{\infty }$ problem can be formulated
as follows: given $\gamma >0,$ find the feedback control $F$ which
exponentially stabilizes system (\ref{1})-(\ref{2}) such that $\left\Vert
G_{F}\right\Vert _{L(L^{2}(\mathbb{R}_{+},W),L^{2}(\mathbb{R}%
_{+},Z))}<\gamma .$

To be more precise in what concerns the relation with the Hardy space $%
H^{\infty }$, we briefly recall a well-known result property of
vector-valued Hardy classes (see e.g., \cite{Staffans}, \cite{Staffans-Weiss}%
, \cite{Curtain-Zwart}, Theorem A6.26). The space $H^{\infty }$ is defined
as the vector space of bounded holomorphic functions on the right half
plane, $\mathbb{C}_{+}=\{z\in \mathbb{C};$ $\mathit{Re}\,z>0\}$, with the
norm $\left\Vert f\right\Vert _{H^{\infty }}=\sup_{\left\vert z\right\vert
<1}\left\vert f(z)\right\vert .$ Let us take the Laplace transform in system
(\ref{1})-(\ref{2}) and get 
\begin{equation}
\widehat{z}(\zeta )=C_{1}(\zeta I-A-B_{2}F)^{-1}y_{0}+\widehat{G_{F}}(\zeta )%
\widehat{w}(\zeta ).  \label{9-0}
\end{equation}%
The function $\widehat{G_{F}}:\mathbb{C}_{+}\rightarrow L(W,Z),$ 
\begin{equation}
\widehat{G_{F}}(\zeta )=(C_{1}+D_{1}F)(\zeta I+A+B_{2}F)^{-1}B_{1}
\label{9-1}
\end{equation}%
is the transfer function in the frequency domain, giving a relationship
between the input and output of the system. It plays an important role in
control theory by providing an insight in how disturbances in the system can
affect the output. The results in the papers cited before express the fact
that the $L^{2}$-operator norm of the gain in the time domain is equal to
the Hardy $H^{\infty }(L(W,Z))$-norm of the transfer operator in the
frequency domain, i.e.,

\begin{equation}
\left\Vert G_{F}\right\Vert _{L(L^{2}(\mathbb{R}_{+},W),L^{2}(\mathbb{R}%
_{+},Z))}:=\sup_{w\in L^{2}(\mathbb{R}_{+},W)}\frac{\left\Vert
G_{F}w\right\Vert _{L^{2}(\mathbb{R}_{+},Z)}}{\left\Vert w\right\Vert
_{L^{2}(\mathbb{R}_{+},W)}}=\sup_{\zeta \in \mathbb{C}_{+}}\left\Vert 
\widehat{G_{F}}(\zeta )\right\Vert _{L(W,Z)}=:\left\Vert \widehat{G_{F}}%
\right\Vert _{H^{\infty }}<\gamma .  \label{9-2}
\end{equation}

\textit{Notation and some necessary results.} We end this section by
recalling some other notation and results necessary in the paper. We denote
by $H^{m}(\Omega )$ the Sobolev spaces $W^{2,m}(\Omega ),$ for $m\geq 1$ and
by $H_{0}^{1}(\Omega )$ the space $\{y\in H^{1}(\Omega );$ $tr(y)=0$ on $%
\Gamma \},$ where $tr(y)$ is the trace operator of $y$ on $\Gamma :=\partial
\Omega .$ Moreover, $H^{-1}(\Omega )$ denotes the dual of $H_{0}^{1}(\Omega
).$ Given a Banach space $X$ and $T\in (0,\infty ]$ we define by $%
L^{p}(0,T;X)$ the space of $L^{p}$ $X$-valued functions on $(0,T),$ $p\in
\lbrack 1,\infty ],$ by $C([0,T];X)$ the space of continuous $X$-valued
functions on $(0,T)$ and $W^{1,p}(0,T;X)=\{u\in L^{p}(0,T;X);$ $du/dt\in
L^{p}(0,T;X)\}.$

Let $L:D(L)\subset H\rightarrow H$ be a linear operator defined on the
Hilbert space $H.$ We say that $L$ is $m$-accretive if $L$ is accretive,
meaning that $(Ly,y)_{H}\geq 0,$ $\forall y\in D(L),$ and if $R(I+L)=H,$
where $R$ is the range. The operator $L$ is quasi $m$-accretive or $\omega $-%
$m$-accretive if $\omega I+L$ is $m$-accretive for some $\omega >0.$

\textit{Hardy inequalities.} Let $N>3$ and let $\Omega $ be an open bounded
subset of $\mathbb{R}^{N},$\textit{\ }with $0\in \Omega .$ Then we have 
\begin{equation}
\int_{\Omega }\left\vert \nabla y(x)\right\vert ^{2}dx\geq H_{N}\int_{\Omega
}\frac{\left\vert y(x)\right\vert ^{2}}{\left\vert x\right\vert ^{2}}dx,%
\mbox{ for all }y\in H_{0}^{1}(\Omega ),  \label{HN}
\end{equation}%
where $H_{N}=\frac{(N-2)^{2}}{4}$ is optimal (see \cite{Brezis-Vazquez}, p.
452, Theorem 4.1).

Let $\Omega =(0,1).$ Then we have 
\begin{equation}
\int_{0}^{1}\left\vert y^{\prime }(x)\right\vert ^{2}dx\geq \frac{1}{4}%
\int_{0}^{1}\frac{y(x)}{\left\vert x\right\vert ^{2}}dx,\mbox{ }\forall y\in
H^{1}(0,1),\mbox{ }y(0)=0,  \label{HN0}
\end{equation}%
see \cite{Brezis-Marcus}, p. 217, or Lemma A.1, p. 234.

We recall the Young's inequality for convolutions $(f\ast
g)(t)=\int_{0}^{\infty }f(t-\tau )g(\tau )d\tau ,$ 
\begin{equation}
\left\Vert f\ast g\right\Vert _{L^{r}(0,\infty )}\leq \left\Vert
f\right\Vert _{L^{p}(0,\infty )}\left\Vert g\right\Vert _{L^{q}(0,\infty )},%
\mbox{ where }\frac{1}{p}+\frac{1}{q}=1+\frac{1}{r},\mbox{ }1\leq p,\mbox{ }%
q,\mbox{ }r\leq \infty .  \label{400}
\end{equation}

For simplicity, where there is no risk of confusion, the $L^{p}(\Omega )$%
-norm will be denoted by $\left\Vert \cdot \right\Vert _{p},$ $p\in \lbrack
1,\infty ],$ instead of $\left\Vert \cdot \right\Vert _{L^{p}(\Omega )}.$ We
set $\mathbb{R}=(-\infty ,\infty )$ and $\mathbb{R}_{+}=(0,\infty ).$ Also, $%
\left\vert \cdot \right\vert $ will represent the Euclidian norm in $\mathbb{%
R}^{N},$ for any $N=1,2,...,$ accordingly. In the further calculations $C,$ $%
C_{1},...,C_{N},$ $C_{T}$ denote positive constants (which may change from
line to line), $C_{N}$ depending on $N,$ via $\lambda <H_{N}$ and $C_{T}$
depending on $T.$

\section{The main result\label{Main}}

\setcounter{equation}{0}

Besides $(i_{1})$ we assume the following hypotheses:

\begin{itemize}
\item[$(i_{2})$] the next relation takes place: 
\begin{equation}
\left\Vert B_{2}^{\ast }e^{A^{\ast }t}\right\Vert _{L(H,U)}\in L^{1}(0,T),%
\mbox{ for all }T>0,  \label{12}
\end{equation}

\item[$(i_{3})$] the pair $(A,C_{1})$ is exponentially detectable (that is
there exists $K\in L(Z,H)$ such that $A+KC_{1}$ generates an exponentially
stable semigroup) and 
\begin{equation}
\int_{0}^{\infty }\left\Vert B_{2}^{\ast }e^{(A^{\ast }+C_{1}^{\ast }K^{\ast
})t}y\right\Vert _{U}dt\leq C\left\Vert y\right\Vert _{H},\mbox{ for all }%
y\in H,  \label{12-0}
\end{equation}

\item[$(i_{4})$] $\left\Vert D_{1}^{\ast }D_{1}u\right\Vert _{U^{\ast
}}=\left\Vert u\right\Vert _{U}$ and $D_{1}^{\ast }C_{1}=0.$
\end{itemize}

\noindent Let us comment a little these hypotheses. The $L^{1}$%
-admissibility hypothesis of the observation operator $B_{2}^{\ast }$ in $%
(i_{2})$ is made in order to ensure the existence of a mild solution to (\ref%
{1}) in $L^{2}(0,T;H)$ for every $T>0,$ with initial condition $y_{0}$ and
inputs $u\in L^{2}(0,T;U)$ and $w\in L^{2}(0,T;W)$. In an ideal situation
when $B_{2}\in L(U,H),$ eqs. (\ref{1})-(\ref{1'}) have a unique mild
solution $y\in C([0,T];H),$ for every $T>0,$ given by%
\begin{equation}
y(t)=e^{At}y_{0}+\int_{0}^{t}e^{A(t-s)}B_{1}w(s)ds+%
\int_{0}^{t}e^{A(t-s)}B_{2}u(s)ds,\mbox{ }t\in \lbrack 0,\infty ).
\label{13}
\end{equation}%
But generally, $B_{2}$ may be not continuous from $U$ to $H,$ in some
situations its range being in a larger abstract space, indicated before to
be $(D(A^{\ast }))^{\prime }.$ The unique solution to (\ref{1})-(\ref{1'})
is in this case in $C([0,\infty );(D(A^{\ast }))^{\prime }).$ Consequently,
the previous formula should be written in a weak sense, that is for all $%
t\geq 0,$ we have%
\begin{equation}
(y(t),\varphi )_{H}=(e^{At}y_{0},\varphi )_{H}+\int_{0}^{t}\left(
e^{A(t-s)}(B_{1}w(s),\varphi )_{H}+(u(s),B_{2}^{\ast }e^{A^{\ast
}(t-s)}\varphi )_{U}\right) ds,\mbox{ }\forall \varphi \in H,\mbox{ }%
y_{0}\in H.  \label{13-1}
\end{equation}%
Assumption $(i_{2})$ ensures that $y\in L^{2}(0,T;H),$ and this follows by
proving that $\int_{0}^{T}(y(t),\varphi (t))_{H}dt<C\left\Vert \varphi
\right\Vert _{L^{2}(0,T;H)},$ for $\varphi \in L^{2}(0,T;H).$ Indeed, this
is clearly seen for the first two terms in (\ref{13-1}), since $B_{1}w\in
L^{2}(\mathbb{R}_{+};H)$. For the last term we calculate%
\begin{eqnarray}
&&\int_{0}^{T}\int_{0}^{t}\left( u(s),B_{2}^{\ast }e^{A^{\ast }(t-s)}\varphi
(t)\right) _{U}dsdt=\int_{0}^{T}\int_{s}^{T}\left( u(s),B_{2}^{\ast
}e^{A^{\ast }(t-s)}\varphi (t)\right) _{U}dtds  \label{16-0} \\
&\leq &\left( \int_{0}^{T}\left\Vert u(s)\right\Vert _{U}^{2}ds\right)
^{1/2}\left( \int_{0}^{T}\left\Vert \int_{0}^{T}B_{2}^{\ast }e^{A^{\ast
}(t-s)}\varphi (t)dt\right\Vert _{U}^{2}ds\right) ^{1/2}  \notag \\
&\leq &\left\Vert u\right\Vert _{L^{2}(0,T;U)}\left\{ \left(
\int_{0}^{T}\left\Vert B_{2}^{\ast }e^{A^{\ast }t}\right\Vert
_{L(H,U)}dt\right) \left( \int_{0}^{T}\left\Vert \varphi (t)\right\Vert
_{H}^{2}dt\right) ^{1/2}\right\}   \notag \\
&\leq &\left\Vert u\right\Vert _{L^{2}(0,T);U)}\left( \int_{0}^{T}\left\Vert
B_{2}^{\ast }e^{A^{\ast }t}\right\Vert _{L(H,U)}dt\right) \left\Vert \varphi
\right\Vert _{L^{2}(0,T;H)}\leq C\left\Vert \varphi \right\Vert
_{L^{2}(0,T;H)},  \notag
\end{eqnarray}%
where we used $(i_{2})$ and the Young's inequality for convolution (\ref{400}%
) with $p=1,$ $q=r=2.$ Then, it follows that $y\in L^{2}(0,T;H)$ and the
last term in (\ref{13}) is in $H$.

Regarding (\ref{12-0}) we mention that the corresponding result related to $%
L^{2}$ instead of $L^{1}$ is a particular case of Theorem 5.4.2 in \cite%
{Tucsnak-Weiss}, so that we expect that (\ref{12}) and the detectability
hypothesis imply (\ref{12-0}), at least in some cases. However, we keep here
relation (\ref{12-0}) as a hypothesis and check it in the applications, by
different proofs according the case. In applications, the first relation in
hypothesis $(i_{4})$ may be weaken to $D_{1}^{\ast }D_{1}\geq \epsilon I$
(see e.g., \cite{VB-S}). However, for certain choices of operators $D_{1}$
and $C_{1},$ relations $(i_{4})$ may be proved as they are.

Theorem 3.1 below is the main result concerning the $H^{\infty }$-control
problem under hypotheses $(i_{1})$-$(i_{4})$ and it gives a representation
for the feedback operator $F$ which is a suboptimal solution to our $%
H^{\infty }$-control problem.

This theorem was proved, under some appropriate hypotheses for the
hyperbolic case in \cite{VB-H-92} and \cite{VB-H-95-hyp}. Actually, instead
of (\ref{12}) there it was used the $L^{2}$-admissibility condition 
\begin{equation}
\int_{0}^{T}\left\Vert B_{2}^{\ast }e^{A^{\ast }t}y\right\Vert
_{U}^{2}dt\leq C_{T}\left\Vert y\right\Vert _{H}^{2},\mbox{ for every }y\in H%
\mbox{ and }T>0.  \label{14}
\end{equation}%
For the treatment of specific parabolic problems intended to be achieved in
the paper, we have in mind to adapt that approach to the case covered by
assumptions $(i_{1})-(i_{4})$ to obtain the following main result.

\begin{theorem}
\label{Th-main}Let hypotheses $(i_{1})-(i_{4})$ hold and let $\gamma >0.$
Assume that there exists $F\in L(H,U)$ such that $A+B_{2}F$ generates an
analytic exponentially stable $C_{0}$-semigroup on $H$ and 
\begin{equation}
\left\Vert G_{F}\right\Vert _{L(L^{2}(\mathbb{R}_{+};W),L^{2}(\mathbb{R}%
_{+};Z))}<\gamma .  \label{14-1}
\end{equation}%
Then, there exists a Hilbert space $\mathcal{X}\subset H$ with dense and
continuous injection and an operator 
\begin{equation}
P\in L(H,H)\cap L(\mathcal{X},D(A^{\ast })),\mbox{ }P=P^{\ast }\geq 0,
\label{14-2}
\end{equation}%
which satisfies the algebraic Riccati equation 
\begin{equation}
A^{\ast }Py+P(A-B_{2}B_{2}^{\ast }P+\gamma ^{-2}B_{1}B_{1}^{\ast
}P)y+C_{1}^{\ast }C_{1}y=0,\mbox{ }\forall y\in \mathcal{X},  \label{15}
\end{equation}%
where $B_{2}^{\ast }P\in L(\mathcal{X},U)$ and the operators 
\begin{equation}
\Lambda _{P}:=A-B_{2}B_{2}^{\ast }P+\gamma ^{-2}B_{1}B_{1}^{\ast }P,\mbox{ }%
\Lambda _{P}^{1}:=A-B_{2}B_{2}^{\ast }P  \label{15-prim}
\end{equation}%
with the domain $\mathcal{X}$ generate exponentially stable semigroups on $H$%
. Moreover, the feedback control 
\begin{equation}
\widetilde{F}=-B_{2}^{\ast }P  \label{16}
\end{equation}%
solves the $H^{\infty }$-problem, that is $\left\Vert G_{\widetilde{F}%
}\right\Vert _{L(L^{2}(\mathbb{R}_{+};W),L^{2}(\mathbb{R}_{+};Z))}<\gamma .$

Conversely, assume that there exists a solution $P$ to equation (\ref{15})
with the properties (\ref{14-2}) and such that the corresponding operators $%
\Lambda _{P}$ and $\Lambda _{P}^{1}$ generate exponentially stable
semigroups on $H.$ Then, the feedback operator $\widetilde{F}=-B_{2}^{\ast
}P $ solves the $H^{\infty }$-problem (\ref{14-1}).
\end{theorem}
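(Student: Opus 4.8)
The plan is to reduce the $H^{\infty }$-problem to a min-max (differential game) optimal control problem, following the hyperbolic argument of \cite{VB-H-92}, \cite{VB-H-95-hyp} but using hypotheses $(i_{1})$--$(i_{4})$ in place of the $L^{2}$-admissibility (\ref{14}). To (\ref{1})--(\ref{1'}) I associate
\[
J(u,w;y_{0})=\int_{0}^{\infty }\big(\left\Vert z(t)\right\Vert _{Z}^{2}-\gamma ^{2}\left\Vert w(t)\right\Vert _{W}^{2}\big)\,dt,\qquad \varphi (y_{0})=\inf_{u}\sup_{w}J(u,w;y_{0}).
\]
First I would bound $\varphi $: taking $u=Fy$ with the stabilizing $F$ of the hypothesis, $z=f_{0}+G_{F}w$ with $f_{0}\in L^{2}(\mathbb{R}_{+};Z)$ and $\left\Vert f_{0}\right\Vert _{L^{2}}\leq C\left\Vert y_{0}\right\Vert _{H}$, so $J(Fy,w;y_{0})\leq (\left\Vert f_{0}\right\Vert _{L^{2}}+\left\Vert G_{F}\right\Vert \left\Vert w\right\Vert _{L^{2}})^{2}-\gamma ^{2}\left\Vert w\right\Vert _{L^{2}}^{2}$, and since $\left\Vert G_{F}\right\Vert <\gamma $ this is bounded above by $C\left\Vert y_{0}\right\Vert _{H}^{2}$; hence $0\leq \varphi (y_{0})\leq C\left\Vert y_{0}\right\Vert _{H}^{2}$, the lower bound coming from $w=0$ together with $(i_{4})$, which gives $\left\Vert z\right\Vert ^{2}=\left\Vert C_{1}y\right\Vert ^{2}+\left\Vert u\right\Vert ^{2}\geq 0$. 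The coercivity in $w$ forced by $\left\Vert G_{F}\right\Vert <\gamma $, together with convexity in $u$, yields a saddle point $(u^{\ast },w^{\ast })\in L^{2}(\mathbb{R}_{+};U)\times L^{2}(\mathbb{R}_{+};W)$ with $\varphi (y_{0})=J(u^{\ast },w^{\ast };y_{0})$; by homogeneity and the linear-quadratic structure $\varphi $ is a bounded nonnegative quadratic form, so $\varphi (y_{0})=(Py_{0},y_{0})_{H}$ for a unique $P=P^{\ast }\geq 0$ in $L(H,H)$.

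Next I would run the dynamic programming argument. Since $\varphi (y(t))$ is the value of the game restarted at time $t$, along optimal trajectories $\frac{d}{dt}(Py(t),y(t))_{H}=-(\left\Vert z(t)\right\Vert ^{2}-\gamma ^{2}\left\Vert w(t)\right\Vert ^{2})$, and optimizing the right-hand side pointwise in $u$ and $w$ — legitimate thanks to the parabolic smoothing of $(i_{1})$, which puts the optimal $y(t)$ into $D(A)$ for $t>0$ — gives the optimal feedbacks $u^{\ast }=-B_{2}^{\ast }Py$, $w^{\ast }=\gamma ^{-2}B_{1}^{\ast }Py$ and, after using $(i_{4})$, the identity
\[
2(Py,Ay)_{H}-\left\Vert B_{2}^{\ast }Py\right\Vert _{U}^{2}+\gamma ^{-2}\left\Vert B_{1}^{\ast }Py\right\Vert _{W}^{2}+\left\Vert C_{1}y\right\Vert _{Z}^{2}=0,
\]
which is (\ref{15}). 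One defines $\mathcal{X}$ as the graph-type space on which $\Lambda _{P}^{1}=A-B_{2}B_{2}^{\ast }P$ is well posed and on which $B_{2}^{\ast }P\in L(\mathcal{X},U)$ and $PA\in L(\mathcal{X},H)$ make sense, and then proves $P\in L(\mathcal{X},D(A^{\ast }))$ with $\mathcal{X}\hookrightarrow H$ dense and continuous; establishing this regularity is the parabolic-specific heart of the proof, and it is here that analyticity of the closed-loop semigroups together with the admissibility bounds $(i_{2})$ and (\ref{12-0}) take over the role played by (\ref{14}) in \cite{VB-H-92}, \cite{VB-H-95-hyp}.

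For the exponential stability of $\Lambda _{P}^{1}$ and $\Lambda _{P}$: along $\widetilde{y}=e^{\Lambda _{P}^{1}t}y_{0}$, equation (\ref{15}) gives $\frac{d}{dt}(P\widetilde{y},\widetilde{y})_{H}=-\left\Vert B_{2}^{\ast }P\widetilde{y}\right\Vert _{U}^{2}-\gamma ^{-2}\left\Vert B_{1}^{\ast }P\widetilde{y}\right\Vert _{W}^{2}-\left\Vert C_{1}\widetilde{y}\right\Vert _{Z}^{2}\leq 0$, hence $\int_{0}^{\infty }(\left\Vert B_{2}^{\ast }P\widetilde{y}\right\Vert _{U}^{2}+\left\Vert C_{1}\widetilde{y}\right\Vert _{Z}^{2})dt\leq (Py_{0},y_{0})_{H}<\infty $; writing $\widetilde{y}^{\prime }=(A+KC_{1})\widetilde{y}-KC_{1}\widetilde{y}-B_{2}B_{2}^{\ast }P\widetilde{y}$ with $K$ from $(i_{3})$ and combining the mild formula with Young's inequality (\ref{400}) and the $L^{1}$-admissibility (\ref{12-0}) gives $\widetilde{y}\in L^{2}(\mathbb{R}_{+};H)$, so $\Lambda _{P}^{1}$ generates an exponentially stable semigroup by the theorem of Datko \cite{Datko}; the argument for $\Lambda _{P}$ is the same, its optimal trajectory carrying $u^{\ast },w^{\ast }\in L^{2}$ and hence $\int \left\Vert C_{1}y\right\Vert ^{2}dt<\infty $ from $J(u^{\ast },w^{\ast };y_{0})=(Py_{0},y_{0})_{H}<\infty $. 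Finally, to see that $\widetilde{F}=-B_{2}^{\ast }P$ solves the problem, take $y_{0}=0$ and $w\in L^{2}(\mathbb{R}_{+};W)$; along $y^{\prime }=\Lambda _{P}^{1}y+B_{1}w$, using (\ref{15}) and $(i_{4})$ (so that $\left\Vert z\right\Vert ^{2}=\left\Vert C_{1}y\right\Vert ^{2}+\left\Vert B_{2}^{\ast }Py\right\Vert ^{2}$) and completing the square in $w$,
\[
\tfrac{d}{dt}(Py,y)_{H}=-\left\Vert z\right\Vert _{Z}^{2}-\gamma ^{-2}\left\Vert B_{1}^{\ast }Py-\gamma ^{2}w\right\Vert _{W}^{2}+\gamma ^{2}\left\Vert w\right\Vert _{W}^{2};
\]
since $y(0)=0$ and $y(t)\to 0$ (exponential stability of $\Lambda _{P}^{1}$ and $B_{1}w\in L^{2}$), integration over $\mathbb{R}_{+}$ gives $\left\Vert G_{\widetilde{F}}w\right\Vert _{L^{2}}^{2}=\gamma ^{2}\left\Vert w\right\Vert _{L^{2}}^{2}-\gamma ^{-2}\left\Vert B_{1}^{\ast }Py-\gamma ^{2}w\right\Vert _{L^{2}}^{2}\leq \gamma ^{2}\left\Vert w\right\Vert _{L^{2}}^{2}$. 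Strictness is then obtained by contradiction: for a maximizing sequence with $\left\Vert w_{n}\right\Vert _{L^{2}}=1$ one would have $\varepsilon _{n}:=w_{n}-\gamma ^{-2}B_{1}^{\ast }Py_{n}\to 0$ in $L^{2}$, so $y_{n}^{\prime }=\Lambda _{P}y_{n}+B_{1}\varepsilon _{n}$, $y_{n}(0)=0$, whence $y_{n}\to 0$ in $L^{2}(\mathbb{R}_{+};H)$ (by (\ref{400}) and exponential stability of $\Lambda _{P}$) and therefore $w_{n}\to 0$ in $L^{2}$, contradicting $\left\Vert w_{n}\right\Vert _{L^{2}}=1$; thus $\left\Vert G_{\widetilde{F}}\right\Vert <\gamma $.

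The converse part is then immediate: it is precisely the computation of the previous paragraph, which uses only the Riccati equation (\ref{15}) and the exponential stability of $\Lambda _{P}$ and $\Lambda _{P}^{1}$ — both now hypotheses — so the same completion of squares and maximizing-sequence argument give $\left\Vert G_{\widetilde{F}}\right\Vert <\gamma $. I expect the main obstacle to be the regularity step of the second paragraph: constructing $\mathcal{X}$ and proving $P\in L(\mathcal{X},D(A^{\ast }))$ and $B_{2}^{\ast }P\in L(\mathcal{X},U)$, that is, making the formal dynamic-programming derivation of (\ref{15}) rigorous when $B_{2}$ is unbounded into $H$; this is exactly where the analyticity of the semigroups and the $L^{1}$-admissibility hypotheses $(i_{2})$--$(i_{3})$ must substitute for the $L^{2}$-admissibility available in the hyperbolic case.
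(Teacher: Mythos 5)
Your overall strategy coincides with the paper's: the same differential game, the same identification of $P$ through the value of the game, the same use of detectability $(i_{3})$ together with the $L^{1}$-admissibility (\ref{12-0}), Young's inequality and Datko's theorem for the exponential stability of $\Lambda _{P}$ and $\Lambda _{P}^{1}$, and the same completion-of-squares identity for the closed loop. One step of yours is genuinely simpler than the paper's: for the strict inequality $\left\Vert G_{\widetilde{F}}\right\Vert <\gamma $ the paper proves $\left\Vert \widetilde{w}\right\Vert _{L^{2}}\geq \alpha \left\Vert w\right\Vert _{L^{2}}$ (relation (\ref{227})) by a contradiction argument resting on compactness of $e^{At}$ and Ascoli--Arzel\`{a} (Lemma 3.6), whereas your observation that $y^{\prime }=(A-B_{2}B_{2}^{\ast }P)y+B_{1}w$ rewrites as $y^{\prime }=\Lambda _{P}y+B_{1}\widetilde{w}$, so that exponential stability of $\Lambda _{P}$ and Young's inequality give $\left\Vert y\right\Vert _{L^{2}}\leq C\left\Vert \widetilde{w}\right\Vert _{L^{2}}$ and hence $\left\Vert w\right\Vert _{L^{2}}\leq (1+C\gamma ^{-2}\left\Vert B_{1}^{\ast }P\right\Vert )\left\Vert \widetilde{w}\right\Vert _{L^{2}}$, yields (\ref{227}) directly, with an explicit constant and without any compactness hypothesis on the semigroup. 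That part is an improvement.

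The gap is precisely where you announce you expect one, and you do not close it. First, your description of $\mathcal{X}$ as ``the graph-type space on which $\Lambda _{P}^{1}$ is well posed'' is circular: $\Lambda _{P}^{1}$ contains $B_{2}^{\ast }P$, which is only meaningful once the regularity $P\in L(\mathcal{X},D(A^{\ast }))$ is established, and that regularity is what $\mathcal{X}$ is supposed to encode. The paper's device is to set $\mathcal{X}:=D(A_{P})$, where $A_{P}$ is the generator of the semigroup $T_{P}(t)y_{0}=y^{\ast }(t)$ along optimal trajectories, to deduce $Py_{0}=-p(0)\in D(A^{\ast })$ from the regularity of the adjoint state $p$ in (\ref{217}) (for $y_{0}\in D(A_{P})$ one has $y^{\ast }\in C^{1}([0,T];H)$, hence $p^{\prime }\in C([0,T];H)$, hence $A^{\ast }p(0)\in H$), and only then to identify $A_{P}$ with $\Lambda _{P}$ on $\mathcal{X}$; none of this construction appears in your sketch. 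Second, the one concrete justification you give for the pointwise Hamilton--Jacobi optimization --- that parabolic smoothing puts the optimal $y(t)$ into $D(A)$ for $t>0$ --- is not correct in the regime the theorem is written for: $B_{2}\in L(U,(D(A^{\ast }))^{\prime })$, so the forcing $B_{2}u^{\ast }$ does not take values in $H$, and in the boundary-control applications of Sections \ref{Boundary}--\ref{1D} the optimal state satisfies an inhomogeneous Dirichlet condition and is never in $D(A)$; the paper only obtains $y^{\ast }\in C([0,\infty );H)\cap W^{1,2}(\delta ,T;H)$ and membership in $D(A_{P})$. The rigorous path to (\ref{15}) goes through the adjoint state, i.e.\ the Euler--Lagrange system (\ref{216})--(\ref{219}), the dynamic-programming identity $p(t)=-Py^{\ast }(t)$, and differentiation of $(Py^{\ast }(t),y^{\ast }(t))_{H}$; your formal derivation lands on the right equation, but the argument as written would not survive unbounded $B_{2}$.
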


The space $\mathcal{X}$ will be defined in the theorem proof before Lemma %
\ref{operators}, in (\ref{3-43-2}). Moreover, we shall show in Lemma \ref%
{operators} that if the operator $\Lambda _{P}$ with the domain $D(\Lambda
_{P})=\{y\in H;$ $\Lambda _{P}y=(A-B_{2}B_{2}^{\ast }P+\gamma
^{-2}B_{1}B_{1}^{\ast }P)y\in H\}$ is closed, then $\mathcal{X}=D(\Lambda
_{P}).$ This will happen in all examples given the next sections.

\medskip

\noindent \textbf{Proof }of Theorem \ref{Th-main}\textbf{. }We assume first
that there exists a solution $F\in L(H,U)$ to the $H^{\infty }$-control
problem such that $A_{F}:=A+B_{2}F$ generates an analytic exponentially
stable $C_{0}$-semigroup and (\ref{14-1}) holds. We must prove that there
exists $P$ satisfying (\ref{14-2})-(\ref{16}).

The state-space approach of the above $H^{\infty }$-control problem comes
back to solve the differential game%
\begin{equation}
\sup_{w\in L^{2}(\mathbb{R}_{+},W)}\inf_{u\in L^{2}(\mathbb{R}_{+},U)}\frac{1%
}{2}\int_{0}^{\infty }(\left\Vert z(t)\right\Vert _{Z}^{2}-\gamma
^{2}\left\Vert w(t)\right\Vert _{W}^{2})dt,  \label{11}
\end{equation}%
subject to (\ref{1})-(\ref{1'}), which ensures a prescribed bound on the
Hardy norm $H^{\infty }$ of the transfer operator (see e.g., \cite%
{VB-H-95-hyp}).

Let $J:L^{2}(\mathbb{R}_{+};U)\times L^{2}(\mathbb{R}_{+};W)\rightarrow
\lbrack -\infty ,\infty ]$ be defined as%
\begin{equation}
J(u,w)=\frac{1}{2}\int_{0}^{\infty }\{\left\Vert
C_{1}y(t)+D_{1}u(t)\right\Vert _{Z}^{2}-\gamma ^{2}\left\Vert
w(t)\right\Vert _{W}^{2}\}dt  \label{200}
\end{equation}%
and consider first a minimization problem, for a fixed $w\in L^{2}(\mathbb{R}%
_{+};W)$, 
\begin{equation}
\inf_{u\in L^{2}(\mathbb{R}_{+};U)}J(u,w),  \label{201}
\end{equation}%
subject to system (\ref{1})-(\ref{2}). By hypothesis $(i_{4})$ we see that 
\begin{equation}
J(u,w)=\frac{1}{2}\int_{0}^{\infty }\{\left\Vert C_{1}y(t)\right\Vert
_{Z}^{2}+\left\Vert u(t)\right\Vert _{U}^{2}-\gamma ^{2}\left\Vert
w(t)\right\Vert _{W}^{2}\}dt,  \label{202}
\end{equation}%
so $u\rightarrow J(u,w)$ is strictly convex, whence it easily can be shown
that (\ref{201}) has a unique solution 
\begin{equation}
u^{\ast }=\Gamma w  \label{203}
\end{equation}%
with $\Gamma :L^{2}(\mathbb{R}_{+};W)\rightarrow L^{2}(\mathbb{R}_{+};U).$

We denote by $y^{u^{\ast }}$ the solution to (\ref{1}) corresponding to $%
u^{\ast }$ (realizing the minimum in (\ref{201})) and $w,$ that is, $%
y^{u^{\ast }}:=y^{u^{\ast },w}.$

\begin{lemma}
There exists $p\in C(\mathbb{R}_{+};H)\cap L^{2}(\mathbb{R}_{+};H)$
satisfying%
\begin{equation}
p^{\prime }(t)=-A_{F}^{\ast }p(t)+C_{1}^{\ast }C_{1}y^{u^{\ast }}(t)+F^{\ast
}u^{\ast }(t),\mbox{ }t\in \mathbb{R}_{+},  \label{204}
\end{equation}%
\begin{equation}
u^{\ast }(t)=B_{2}^{\ast }p(t),\mbox{ a.e. }t>0.  \label{205}
\end{equation}
\end{lemma}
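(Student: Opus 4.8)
The statement is the standard first-order optimality (maximum principle) characterization for the inner minimization problem~(\ref{201}). Since by~(\ref{202}) the functional $u\mapsto J(u,w)$ is strictly convex and coercive on $L^2(\mathbb{R}_+;U)$ (the quadratic part in $u$ is $\frac12\|u\|^2$, and $C_1 y$ depends affinely and continuously on $u$ because $A_F=A+B_2F$ generates an exponentially stable analytic semigroup and $B_2$ is $L^1$-admissible by $(i_2)$), the minimizer $u^\ast=\Gamma w$ exists and is unique, and it is characterized by the Euler--Lagrange equation. The plan is to write this Euler--Lagrange equation, identify the costate $p$ as the solution of the adjoint equation, and then verify the claimed regularity $p\in C(\mathbb{R}_+;H)\cap L^2(\mathbb{R}_+;H)$.

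\textbf{Step 1: Variation of the state.} For a fixed $w$, write $y=y^{u,w}$ for the mild solution of~(\ref{1}) with $u$ and $w$ as inputs. Given a direction $v\in L^2(\mathbb{R}_+;U)$, the Gateaux derivative of $u\mapsto y^{u,w}$ at $u^\ast$ in direction $v$ is the mild solution $\eta$ of $\eta'(t)=A_F\eta(t)+B_2 v(t)$, $\eta(0)=0$ (the perturbation $B_2\delta u$ of the dynamics being split through $A+B_2F$ since $u=Fy+(u-Fy)$; more directly, $y^{u^\ast+\varepsilon v}-y^{u^\ast}=\varepsilon\eta$ exactly, by linearity, where $\eta$ solves $\eta'=A\eta+B_2 v$, equivalently $\eta'=A_F\eta+B_2(v-F\eta)$). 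I would work with $\eta'=A_F\eta+B_2(v-F\eta)$, which is the convenient form because $A_F$ is exponentially stable, so $\eta\in L^2(\mathbb{R}_+;H)$, and $C_1\eta\in L^2(\mathbb{R}_+;Z)$.

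\textbf{Step 2: First-order condition and introduction of the costate.} Differentiating~(\ref{202}) gives, for all $v\in L^2(\mathbb{R}_+;U)$,
\begin{equation}
0=\int_0^\infty\big[(C_1 y^{u^\ast}(t),C_1\eta(t))_Z+(u^\ast(t),v(t))_U\big]\,dt.
\label{eq:foc}
\end{equation}
Now define $p$ as the (backward) mild solution of
\begin{equation}
p'(t)=-A_F^\ast p(t)+C_1^\ast C_1 y^{u^\ast}(t)+F^\ast u^\ast(t),\qquad p(\cdot)\ \text{bounded on }\mathbb{R}_+,
\label{eq:adj}
\end{equation}
i.e.\ $p(t)=\int_t^\infty e^{A_F^\ast(s-t)}\big(C_1^\ast C_1 y^{u^\ast}(s)+F^\ast u^\ast(s)\big)\,ds$; this integral converges in $H$ and yields $p\in C(\mathbb{R}_+;H)$ because $A_F^\ast$ generates an exponentially stable semigroup and the forcing term $C_1^\ast C_1 y^{u^\ast}+F^\ast u^\ast$ is in $L^2(\mathbb{R}_+;H)$ (using $y^{u^\ast}\in L^2(\mathbb{R}_+;H)$, $u^\ast\in L^2(\mathbb{R}_+;U)$, $C_1,F^\ast$ bounded); a standard convolution/Datko argument ($L^1\ast L^2\subset L^2$, relation~(\ref{400})) then gives $p\in L^2(\mathbb{R}_+;H)$ as well. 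The point $p(0)$ is free (no transversality term appears since the cost has no terminal penalty and the horizon is infinite with $p$ decaying).

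\textbf{Step 3: Duality computation to read off~(\ref{205}).} Pair~(\ref{eq:adj}) with $\eta$ and the $\eta$-equation with $p$, integrate over $\mathbb{R}_+$, and use $\eta(0)=0$ together with $p(t),\eta(t)\to 0$ as $t\to\infty$ (both are $L^2$ and, being mild solutions of exponentially stable equations with $L^2$ forcing, they tend to $0$) to get
\begin{equation}
\int_0^\infty\!\!\big(C_1^\ast C_1 y^{u^\ast}(t)+F^\ast u^\ast(t),\eta(t)\big)_H\,dt=\int_0^\infty\!\!\big(p(t),B_2(v(t)-F\eta(t))\big)_H\,dt,
\label{eq:dual}
\end{equation}
that is $\int_0^\infty(C_1^\ast C_1 y^{u^\ast},\eta)_H+(u^\ast,F\eta)_U\,dt=\int_0^\infty(B_2^\ast p,v)_U-(B_2^\ast p,F\eta)_U\,dt$. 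The terms $(u^\ast,F\eta)_U$ cancel, leaving $\int_0^\infty(C_1\eta,C_1 y^{u^\ast})_Z\,dt=\int_0^\infty(B_2^\ast p,v)_U\,dt$. Comparing with~(\ref{eq:foc}) gives $\int_0^\infty(u^\ast(t)-B_2^\ast p(t),v(t))_U\,dt=0$ for all $v\in L^2(\mathbb{R}_+;U)$, hence $u^\ast(t)=B_2^\ast p(t)$ a.e., which is~(\ref{205}); and~(\ref{eq:adj}) is~(\ref{204}).

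\textbf{Main obstacle.} The delicate point is the rigorous duality identity~(\ref{eq:dual}): $B_2$ is only bounded into $(D(A^\ast))'$, so the pairing $(p,B_2 v)$ must be read as $(B_2^\ast p,v)_U$, which requires $p(t)\in D(B_2^\ast)$; this is where hypothesis $(i_2)$ (and the $L^1$-admissibility it encodes) is used, via the representation of $\eta$ in weak form~(\ref{13-1}) and the estimate~(\ref{16-0}) applied with $u$ replaced by $v-F\eta$. I would therefore first justify $\int_0^\infty(B_2^\ast e^{A_F^\ast t}\xi,\cdot)$-type integrals exactly as in~(\ref{16-0}), establishing that $B_2^\ast p\in L^2(\mathbb{R}_+;U)$, and only then perform the integration by parts in~(\ref{eq:dual}), approximating $y_0$ or the data by smooth elements and passing to the limit if one wants to avoid working directly in $(D(A^\ast))'$. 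The exponential stability of $A_F$ is what makes all the boundary-at-infinity terms vanish and all the convolutions land in $L^2$.
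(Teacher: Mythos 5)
Your overall strategy coincides with the paper's: define the costate $p$ by a backward mild formula driven by $e^{A_F^\ast t}$, use the exponential stability of $A_F$ and the analyticity of the semigroup to obtain the regularity of $p$ (in particular $p(t)\in D(A_F^\ast)$ a.e., so that $B_2^\ast p$ is meaningful), and derive (\ref{205}) by pairing the adjoint equation with the linearized state equation. There are, however, two concrete problems in your Steps 2--3. The first is a sign: the formula $p(t)=\int_t^\infty e^{A_F^\ast(s-t)}\bigl(C_1^\ast C_1y^{u^\ast}(s)+F^\ast u^\ast(s)\bigr)ds$ solves $p'=-A_F^\ast p-\bigl(C_1^\ast C_1y^{u^\ast}+F^\ast u^\ast\bigr)$, i.e.\ the adjoint equation with the source negated; to satisfy (\ref{204}) the integral must carry an overall minus sign.

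The second problem is more serious: the cancellation of the terms $(u^\ast,F\eta)_U$ against $-(B_2^\ast p,F\eta)_U$ in your duality identity is circular, since these terms cancel precisely when $(u^\ast-B_2^\ast p,F\eta)_U$ vanishes (up to your sign conventions), which is essentially the identity (\ref{205}) you are trying to prove. The difficulty arises because you take variations directly in $u$, so the linearized state satisfies $\eta'=A_F\eta+B_2(v-F\eta)$ and the effective input $v-F\eta$ is not a free parameter. Two repairs are available. (a) The paper's: change variables to $v=u-Fy$ \emph{before} taking variations, so that the variation $V$ of the new control is free, the linearized equation is (\ref{209}), and the first-order condition (\ref{210}) combines with the duality identity (\ref{212}) to give (\ref{205}) with no cancellation required. (b) Staying with your variables: instead of cancelling, collect terms to obtain $\int_0^\infty(u^\ast-B_2^\ast p,\,v-F\eta[v])_U\,dt=0$ for all $v\in L^2(\mathbb{R}_+;U)$, and then note that $v\mapsto v-F\eta[v]$ is a bijection of $L^2(\mathbb{R}_+;U)$ onto itself (given $V$, solve $\eta'=A_F\eta+B_2V$, $\eta(0)=0$, and set $v=V+F\eta$), which yields (\ref{205}). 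As written, the cancellation step does not go through, and the sign slip in the definition of $p$ must also be corrected for the final identity $u^\ast=B_2^\ast p$ to refer to the $p$ that actually solves (\ref{204}).
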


\begin{proof}
We note first that the solution to the equation $y^{\prime
}(t)=A_{F}y(t)+B_{1}w(t)$ with $y_{0}\in H,$ where $A_{F}=A+B_{2}F$ is
exponentially stable on $H$, is in $L^{2}(\mathbb{R}_{+};H).$ Indeed, 
\begin{eqnarray*}
&&\left\Vert y(t)\right\Vert _{H}\leq Ce^{-\alpha t}\left\Vert
y_{0}\right\Vert _{H}+\int_{0}^{t}\left\Vert e^{A_{F}(t-s)}w(s)\right\Vert
_{H}ds \\
&\leq &Ce^{-\alpha t}\left\Vert y_{0}\right\Vert _{H}+\int_{0}^{t}e^{-\alpha
(t-s)}\left\Vert w(s)\right\Vert _{W}ds,\mbox{ }t\geq 0,
\end{eqnarray*}%
and by applying the Young's inequality for convolution (\ref{400}) with $%
r=2, $ $p=1$ and $q=2$ we obtain%
\begin{eqnarray}
&&\int_{0}^{\infty }\left\Vert y(t)\right\Vert _{H}^{2}dt\leq C\left\{
\left\Vert y_{0}\right\Vert _{H}^{2}+\int_{0}^{\infty }\left(
\int_{0}^{t}e^{-\alpha (t-s)}\left\Vert w(s)\right\Vert _{W}ds\right)
^{2}dt\right\}  \label{11-2} \\
&\leq &C\left\Vert y_{0}\right\Vert _{H}^{2}+C\left( \int_{0}^{\infty
}e^{-\alpha t}dt\right) ^{2}\int_{0}^{\infty }\left\Vert w(t)\right\Vert
_{W}^{2}dt\leq C(\left\Vert y_{0}\right\Vert _{H}^{2}+\left\Vert
w\right\Vert _{L^{2}(0,\infty ;W)}^{2})<\infty .  \notag
\end{eqnarray}

We specify that the solution to (\ref{204}) should be understand in the
following mild sense 
\begin{equation*}
p(t)=-\int_{t}^{\infty }e^{-A_{F}^{\ast }(s-t)}(C_{1}^{\ast }C_{1}y^{u^{\ast
}}(s)+F^{\ast }u^{\ast }(s))ds,
\end{equation*}%
and so $C([0,\infty );H)$ because $F^{\ast }\in L(U,H),$ $C_{1}^{\ast
}C_{1}\in L(H,H).$ Since $A_{F}^{\ast }$ generates an analytic $C_{0}$%
-semigroup it follows by its regularizing effect that $p\in
W^{1,2}(0,T;H)\cap L^{2}(0,T;D(A_{F}^{\ast })),$ for all $T>0.$

We introduce $v:=u-Fy$ and write problem (\ref{201}) as 
\begin{equation}
\inf_{v\in L^{2}(\mathbb{R}_{+};U)}\frac{1}{2}\int_{0}^{\infty }\{\left\Vert
C_{1}y(t)\right\Vert _{Z}^{2}+\left\Vert Fy(t)+v(t)\right\Vert
_{U}^{2}-\gamma ^{2}\left\Vert w(t)\right\Vert _{W}^{2}\}dt  \label{207}
\end{equation}%
subject to $y^{\prime }(t)=A_{F}y(t)+B_{1}w(t)+B_{2}v(t),$ $t\geq 0,$ $%
y(0)=y_{0}$. Since the functional is weakly lower semicontinous and convex,
it follows that (\ref{207}) has a unique solution $v^{\ast }=u^{\ast
}-Fy^{u^{\ast }},$ with $u^{\ast }$ the solution to (\ref{201}) and $%
y^{u^{\ast }}$ the solution to (\ref{1}) corresponding to $u^{\ast }$ and $%
w. $

We set the variation $v^{\lambda }=v^{\ast }+\lambda V,$ where $\lambda >0,$ 
$V\in L^{2}(\mathbb{R}_{+};U)$ and write the system in variations 
\begin{equation}
Y^{\prime }(t)=A_{F}Y(t)+B_{2}V(t),\mbox{ }Y(0)=0,  \label{209}
\end{equation}%
where $Y(t)=\lim_{\lambda \rightarrow 0}\frac{y^{v^{\lambda }}-y^{v^{\ast }}%
}{\lambda }$ weakly in $L^{2}(\mathbb{R}_{+};H).$ Eq. (\ref{209}) can be
still written as $Y^{\prime }(t)=AY(t)+B_{2}(FY(t)+V(T))$ and so it is
easily seen that it has a unique solution $Y$ belonging to $W^{1,2}(\mathbb{R%
}_{+};(D(A_{F}^{\ast }))^{\prime })\cap L^{2}(\mathbb{R}_{+};H)$, the latter
following in the same way as shown before for $y(t)$ in (\ref{16-0})$.$

Writing that $v^{\ast }$ realizes the minimum in (\ref{207}), in particular
that $J(v^{\lambda },w)\geq J(v^{\ast },w),$ we deduce 
\begin{equation*}
\int_{0}^{\infty }\left\{ (C_{1}y^{u^{\ast }}(t),C_{1}Y(t))_{Z}+(Fy^{u^{\ast
}}(t)+v^{\ast }(t),FY(t)+V(t))_{U}\right\} dt\geq 0.
\end{equation*}%
If $\lambda \rightarrow -\lambda $ we obtain the reverse inequality, so that
in conclusion 
\begin{equation}
\int_{0}^{\infty }\left\{ (C_{1}^{\ast }C_{1}y^{u^{\ast }}(t)+F^{\ast
}Fy^{u^{\ast }}(t)+F^{\ast }v^{\ast }(t),Y(t))_{H}+(Fy^{u^{\ast
}}(t)+v^{\ast }(t),V(t))_{U}\right\} dt=0,  \label{210}
\end{equation}%
for all $V\in L^{2}(\mathbb{R}_{+};U).$ By testing the first equation (\ref%
{209}) by $p(t)\in D(A_{F}^{\ast }),$ solution to (\ref{204}) and
integrating with respect to $t$ from $0$ to $\infty ,$ we obtain 
\begin{equation}
\int_{0}^{\infty }(p^{\prime }(t)+A_{F}^{\ast
}p(t),Y(t))_{H}dt+\int_{0}^{\infty }(B_{2}^{\ast }p(t),V(t))_{U}dt=0,
\label{211}
\end{equation}%
which by (\ref{204}) yields%
\begin{equation}
\int_{0}^{\infty }\left( C_{1}^{\ast }C_{1}y^{u^{\ast }}(t)+F^{\ast }u^{\ast
}(t),Y(t)\right) _{H}dt=-\int_{0}^{\infty }(B_{2}^{\ast }p(t),V(t))_{U}dt.
\label{212}
\end{equation}%
By comparison with (\ref{210}), where we write $v^{\ast }=u^{\ast
}-Fy^{u^{\ast }},$ this yields 
\begin{equation}
\int_{0}^{\infty }(-B_{2}^{\ast }p(t)+u^{\ast }(t),V(t))_{U}dt=0,\mbox{ for
all }V\in L^{2}(\mathbb{R}_{+};U).  \label{213}
\end{equation}%
Therefore, we obtain (\ref{205}) as claimed.
\end{proof}

\noindent Then, the dual system (\ref{204}) can be still written by the
replacement of $v^{\ast }$ as 
\begin{equation}
p^{\prime }(t)=-A^{\ast }p(t)+C_{1}^{\ast }C_{1}y^{u^{\ast }}(t),\mbox{ a.e. 
}t\in \mathbb{R}_{+}.  \label{214-2}
\end{equation}

Now, let us consider the function $\varphi :L^{2}(\mathbb{R}%
_{+};W)\rightarrow \mathbb{R}_{+},$ $\varphi (w)=-J(\Gamma w,w),$that is 
\begin{equation*}
\varphi (w)=\frac{1}{2}\int_{0}^{\infty }\left( \gamma ^{2}\left\Vert
w(t)\right\Vert _{W}^{2}-\left\Vert C_{1}y^{u^{\ast }}(t)+D_{1}u^{\ast
}(t)\right\Vert _{Z}^{2}\right) dt,
\end{equation*}%
where $y^{u^{\ast }}$ is the solution to (\ref{1}) corresponding to $%
(u^{\ast },w).$ By (\ref{8-1}) and (\ref{8-2}) we have 
\begin{equation*}
C_{1}y^{u^{\ast }}(t)+D_{1}u^{\ast }(t)=G_{F}w(t)-f_{0}(t)
\end{equation*}%
and so 
\begin{eqnarray*}
&&\left\Vert (C_{1}y^{u^{\ast }}(t)+D_{1}u^{\ast }(t)\right\Vert
_{Z}^{2}=\left\Vert G_{F}w(t)\right\Vert
_{Z}^{2}-2(G_{F}w(t),f_{0}(t))_{Z}+\left\Vert f_{0}(t)\right\Vert _{Z}^{2} \\
&\leq &(1+\delta )\left\Vert G_{F}w(t)\right\Vert _{Z}^{2}+C_{\delta
}\left\Vert f_{0}(t)\right\Vert _{Z}^{2},\mbox{ }\forall t\geq 0.
\end{eqnarray*}%
Now, we integrate from $0$ to $\infty ,$ note that $f_{0}\in L^{2}(\mathbb{R}%
_{+};H),$ and get, 
\begin{equation*}
\int_{0}^{\infty }\left\Vert (C_{1}y^{u^{\ast }}(t)+D_{1}u^{\ast
}(t)\right\Vert _{Z}^{2}dt\leq (1+\delta )(\gamma ^{2}-\varepsilon
)\int_{0}^{\infty }\left\Vert w(t)\right\Vert ^{2}+C_{\delta },
\end{equation*}%
where $\varepsilon $ is fixed and the last inequality is implied by (\ref%
{14-1}). We can find $\delta $ and $\widetilde{\delta }$ such that $%
(1+\delta )(\gamma ^{2}-\varepsilon )\leq \gamma ^{2}-\widetilde{\delta },$
which is verified with the choice $\widetilde{\delta }<\varepsilon -\delta
(\gamma ^{2}-\varepsilon )$ and $\delta <\frac{\varepsilon }{\gamma
^{2}-\varepsilon }.$ Then\thinspace\ 
\begin{equation*}
\varphi (w)\geq \widetilde{\delta }\int_{0}^{\infty }\left\Vert
w(t)\right\Vert _{W}^{2}dt+C,\mbox{ }
\end{equation*}%
and it turns out that $\varphi $ attains its minimum on $L^{2}(\mathbb{R}%
_{+};W)$ in a unique point $w^{\ast }.$

\begin{lemma}
We have 
\begin{equation}
w^{\ast }(t)=-\gamma ^{-2}B_{1}^{\ast }p(t),\mbox{ a.e. }t>0,  \label{215}
\end{equation}%
where $p\in W^{1,2}(0,T;H)$ is the solution to (\ref{214-2}).
\end{lemma}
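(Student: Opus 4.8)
The plan is to compute the first variation of $\varphi$ at the minimizer $w^{\ast}$, exactly mirroring the variational argument already used in the previous lemma for $u^{\ast}$. Since $\varphi$ is strictly convex and coercive on $L^{2}(\mathbb{R}_{+};W)$, its unique minimizer $w^{\ast}$ is characterized by the Euler equation $\varphi'(w^{\ast})=0$. So first I would take $w^{\lambda}=w^{\ast}+\lambda\omega$ for $\lambda\in\mathbb{R}$, $\omega\in L^{2}(\mathbb{R}_{+};W)$, and introduce the state in variations. Because the control $u^{\ast}=\Gamma w$ depends on $w$ as well, the cleanest route is to use the reduced formulation: recall $\varphi(w)=-J(\Gamma w,w)$, and use the fact that at the inner minimum the partial derivative of $J$ with respect to $u$ vanishes, so that the total derivative of $w\mapsto -J(\Gamma w,w)$ equals the partial derivative $-\partial_{w}J$ evaluated along the optimal pair. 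This is the standard envelope/adjoint argument and it means I only need to differentiate $J$ holding $u=u^{\ast}$ fixed while perturbing $w$.

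Next I would set up the sensitivity system: with $u=u^{\ast}$ frozen, perturbing $w$ by $\lambda\omega$ produces a state perturbation $Z(t)=\lim_{\lambda\to 0}\lambda^{-1}(y^{u^{\ast},w^{\lambda}}-y^{u^{\ast},w^{\ast}})$ solving $Z'(t)=A Z(t)+B_{2}F Z(t)+B_{1}\omega(t)=A_{F}Z(t)+B_{1}\omega(t)$, $Z(0)=0$ — here I use that $u^{\ast}=Fy+v^{\ast}$ so the state equation closes with $A_{F}$, and $Z\in W^{1,2}(\mathbb{R}_{+};(D(A_{F}^{\ast}))')\cap L^{2}(\mathbb{R}_{+};H)$ by the same $L^{1}$-admissibility estimate $(i_{2})$ invoked earlier. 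Writing that $w^{\ast}$ realizes the minimum of $\varphi$, i.e. the maximum of $J(\Gamma w,w)$ over $w$, and using the envelope property, I get
\begin{equation*}
\int_{0}^{\infty}\left\{\gamma^{2}(w^{\ast}(t),\omega(t))_{W}-(C_{1}y^{u^{\ast}}(t)+D_{1}u^{\ast}(t),C_{1}Z(t))_{Z}\right\}dt=0
\end{equation*}
for all $\omega$. Using $(i_{4})$ (namely $D_{1}^{\ast}C_{1}=0$), the second term is $(C_{1}^{\ast}C_{1}y^{u^{\ast}}(t),Z(t))_{H}$. Then I would test the sensitivity equation for $Z$ against $p(t)$, the adjoint state solving $p'(t)=-A_{F}^{\ast}p(t)+C_{1}^{\ast}C_{1}y^{u^{\ast}}(t)+F^{\ast}u^{\ast}(t)$ from the previous lemma, integrate over $\mathbb{R}_{+}$, and use the decay of $p$ and $Z$ at infinity together with $Z(0)=0$ to discard boundary terms; this yields $\int_{0}^{\infty}(C_{1}^{\ast}C_{1}y^{u^{\ast}}(t)+F^{\ast}u^{\ast}(t),Z(t))_{H}dt=\int_{0}^{\infty}(B_{1}^{\ast}p(t),\omega(t))_{W}dt$, and combined with $(i_{4})$-reduced optimality plus the relation $u^{\ast}=B_{2}^{\ast}p$ (which makes $F^{\ast}u^{\ast}$ already accounted for), substituting back gives $\int_{0}^{\infty}(\gamma^{2}w^{\ast}(t)+B_{1}^{\ast}p(t),\omega(t))_{W}dt=0$ for all $\omega$, hence $w^{\ast}(t)=-\gamma^{-2}B_{1}^{\ast}p(t)$ a.e.

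The main obstacle I anticipate is the bookkeeping with the $F^{\ast}u^{\ast}$ term in the adjoint equation (\ref{204}): because $\varphi$ is defined through the \emph{reduced} functional where $u^{\ast}=\Gamma w$ also varies with $w$, I must be careful that the envelope argument correctly kills the variation in the $u$-direction, so that the only surviving contribution is through $Z$ driven by $B_{1}\omega$, not by any $B_{2}(\delta u)$. The clean way to handle this, and the one I would adopt, is to note that the inner problem (\ref{201}) is solved exactly by $u^{\ast}$ for the given $w$, so $\frac{d}{d\lambda}J(\Gamma w^{\lambda},w^{\lambda})\big|_{\lambda=0}=\partial_{w}J(u^{\ast},w^{\ast})\cdot\omega$ with no contribution from $\partial_{u}J$; once this is granted the computation is routine and identical in structure to the preceding lemma. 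A secondary technical point is justifying that $Z\in L^{2}(\mathbb{R}_{+};H)$ and that the pairing with $p$ is legitimate with vanishing endpoint terms at $t=\infty$ — but this follows verbatim from the exponential stability of $A_{F}$ and the Young-convolution estimate (\ref{400}) already carried out in the proof of the previous lemma (cf. (\ref{11-2})), so I would simply refer to that.
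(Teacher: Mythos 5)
Your overall strategy --- perturb $w$, write the linearized state equation, and pair it with the adjoint $p$ --- is exactly the paper's argument; the only substantive difference is how the dependence of $u^{\ast }=\Gamma w$ on $w$ is handled. The paper carries it explicitly: the system in variations (\ref{215-1}) contains the term $B_{2}\Gamma \widetilde{w}$, the optimality condition (\ref{215-3}) contains $-\Gamma ^{\ast }\Gamma w^{\ast }$, and these are cancelled only at the very end via $\Gamma ^{\ast }B_{2}^{\ast }p=\Gamma ^{\ast }u^{\ast }=\Gamma ^{\ast }\Gamma w^{\ast }$. You instead invoke the envelope property to discard the $u$-variation from the outset, which is legitimate and arguably cleaner. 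However, as written your computation is internally inconsistent at one point, precisely the point you flag as the main obstacle but then resolve only by assertion: you take the sensitivity equation in the closed-loop form $Z'=A_{F}Z+B_{1}\omega $ (so that $\delta u=FZ\neq 0$), yet you write the first-order condition with the output variation equal to $C_{1}Z$ alone and with no variation of the $\tfrac{1}{2}\Vert u\Vert _{U}^{2}$ term --- that is the condition appropriate to the open-loop sensitivity $Z'=AZ+B_{1}\omega $. You must commit to one convention: either freeze $u$ itself, use $Z'=AZ+B_{1}\omega $, and pair with the adjoint in the form (\ref{214-2}) (no $F^{\ast }u^{\ast }$ anywhere, which is essentially what the paper does after its cancellation); or freeze $v=u-Fy$, use $Z'=A_{F}Z+B_{1}\omega $, include the induced contributions $(F^{\ast }u^{\ast },Z)_{H}$ in the optimality condition (the output perturbation $D_{1}FZ$ is absorbed via $(i_{4})$), and pair with (\ref{204}). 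Either way the extra terms match and you arrive at $\int_{0}^{\infty }(\gamma ^{2}w^{\ast }(t)+B_{1}^{\ast }p(t),\omega (t))_{W}dt=0$; but in the hybrid form you wrote, the term $\int_{0}^{\infty }(F^{\ast }u^{\ast }(t),Z(t))_{H}dt$ does not cancel, and the phrase that it is ``already accounted for'' is not a proof. This is a bookkeeping defect rather than a wrong idea, and once fixed your argument coincides with the paper's.
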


\begin{proof}
Recall that $u^{\ast }=\Gamma w$ and that $y^{u^{\ast }}$ satisfies the
problem 
\begin{equation*}
(y^{u^{\ast }})^{\prime }(t)=Ay^{u^{\ast }}(t)+B_{1}w(t)+B_{2}\Gamma w(t),%
\mbox{ }t\in \mathbb{R}_{+},\mbox{ }y^{\ast }(0)=y_{0}
\end{equation*}%
and proceed by giving variations to $w,$ that is $w^{\lambda }=w^{\ast
}+\lambda \widetilde{w},$ $w\in L^{2}(\mathbb{R}_{+};H).$ Then, the system
in variations is 
\begin{equation}
Y^{\prime }(t)=AY(t)+B_{1}\widetilde{w}(t)+B_{2}\Gamma \widetilde{w}(t)\mbox{%
, }t\in \mathbb{R}_{+},\mbox{ }Y(0)=0  \label{215-1}
\end{equation}%
and the condition of optimality reads 
\begin{equation}
\int_{0}^{\infty }(\gamma ^{2}w^{\ast }(t)-\Gamma ^{\ast }\Gamma w^{\ast
}(t),\widetilde{w}(t))_{W}dt-\int_{0}^{\infty }(C_{1}^{\ast }C_{1}y^{u^{\ast
}}(t),Y(t))_{H}dt=0,  \label{215-3}
\end{equation}%
for all $\widetilde{w}\in L^{2}(\mathbb{R}_{+};W).$ Let us recall the dual
system (\ref{214-2}) and test (\ref{215-1}) by $p(t)$ and integrate for $%
t\in (0,\infty ).$ We get 
\begin{equation}
\int_{0}^{\infty }(p^{\prime }(t)+A^{\ast }p(t),Y(t))_{H}dt+\int_{0}^{\infty
}(B_{1}^{\ast }p(t)+\Gamma ^{\ast }B_{2}^{\ast }p(t),\widetilde{w}%
(t))_{W}dt=0.  \label{215-4}
\end{equation}%
The latter and (\ref{215-3}) gives 
\begin{equation*}
\int_{0}^{\infty }(\gamma ^{2}w^{\ast }(t)-\Gamma ^{\ast }\Gamma w^{\ast
}(t),\widetilde{w}(t))_{W}dt+\int_{0}^{\infty }(B_{1}^{\ast }p(t)+\Gamma
^{\ast }B_{2}^{\ast }p(t),\widetilde{w}(t))_{W}dt=0,
\end{equation*}%
so that, since $-\Gamma ^{\ast }\Gamma w^{\ast }(t)+\Gamma ^{\ast
}B_{2}^{\ast }p(t)=-\Gamma ^{\ast }u^{\ast }(t)+\Gamma ^{\ast }u^{\ast
}(t)=0,$ we obtain 
\begin{equation*}
\int_{0}^{\infty }(\gamma ^{2}w^{\ast }(t)+B_{1}^{\ast }p(t),\widetilde{w}%
(t))_{W}dt=0,
\end{equation*}%
for all $\widetilde{w}\in L^{2}(\mathbb{R}_{+};W),$ that implies (\ref{215}%
), as claimed.
\end{proof}

Thus, we have proved that (\ref{11}) has a unique solution $(u^{\ast
},w^{\ast })$ with the corresponding state denoted $y^{\ast },$
characterized by the Euler-Lagrange system 
\begin{equation}
y^{\ast \prime }(t)=Ay^{\ast }(t)+B_{1}w^{\ast }(t)+B_{2}u^{\ast }(t),\mbox{ 
}t\in \mathbb{R}_{+},\mbox{ }y^{\ast }(0)=y_{0},  \label{216}
\end{equation}%
\begin{equation}
p^{\prime }(t)=-A^{\ast }p(t)+C_{1}^{\ast }C_{1}y^{\ast }(t),\mbox{ }t\in 
\mathbb{R}_{+},  \label{217}
\end{equation}%
\begin{equation}
u^{\ast }(t)=B_{2}^{\ast }p(t),\mbox{ a.e. }t>0.  \label{218}
\end{equation}%
\begin{equation}
w^{\ast }(t)=-\gamma ^{-2}B_{1}^{\ast }p(t),\mbox{ a.e. }t>0,  \label{219}
\end{equation}%
where we already know that 
\begin{eqnarray*}
y^{\ast } &\in &C([0,\infty );(D(A^{\ast }))^{\prime })\cap L^{2}(0,T;H),%
\mbox{ }\forall T>0, \\
p &\in &C([0,\infty );H)\cap L^{2}(\mathbb{R}_{+};H).
\end{eqnarray*}

\begin{lemma}
Let $y_{0}\in H.$ Then, 
\begin{equation}
y^{\ast }\in C([0,\infty );H)\cap W^{1,2}(\delta ,T;H),\mbox{ }\forall
\delta ,\mbox{ }0<\delta \leq T<\infty ,  \label{3-35-2}
\end{equation}%
\begin{equation}
p\in W^{1,2}(0,T;H)\cap L^{2}(0,T;D(A^{\ast })),\mbox{ }\forall T>0.\mbox{ }
\label{3-35-1}
\end{equation}
\end{lemma}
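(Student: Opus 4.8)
The regularity in \eqref{3-35-1} for $p$ has essentially been recorded already: $p$ solves \eqref{214-2} in the mild sense with right-hand side $C_1^*C_1 y^{u^*}\in L^2(\mathbb{R}_+;H)$ (since $y^*\in L^2(0,T;H)$ and $C_1^*C_1\in L(H,H)$), $A^*$ generates an analytic $C_0$-semigroup, and $p\in L^2(\mathbb{R}_+;H)$; so the standard parabolic regularizing estimate for the backward evolution equation gives $p\in W^{1,2}(0,T;H)\cap L^2(0,T;D(A^*))$ for every $T>0$. I would state this, citing the maximal-$L^2$-regularity of analytic semigroups (e.g., the argument already used for $p\in C([0,\infty);H)$ before the first lemma, now bootstrapped one level using that the source is in $L^2$ rather than merely continuous), and be done with \eqref{3-35-1}.

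For \eqref{3-35-2}, the point is that $y^*$ solves \eqref{216} with $B_1 w^*(t) = -\gamma^{-2}B_1 B_1^* p(t)$ and $B_2 u^*(t) = B_2 B_2^* p(t)$ on the right, i.e.
\begin{equation*}
y^{*\prime}(t) = A y^*(t) + \gamma^{-2} B_1 B_1^* p(t) \cdot(-1) + B_2 B_2^* p(t),\qquad y^*(0)=y_0 .
\end{equation*}
Here $-\gamma^{-2}B_1 B_1^* p \in L^2(\mathbb{R}_+;H)$ because $B_1\in L(W,H)$ and $p\in L^2(\mathbb{R}_+;H)$, which is a genuine $H$-valued forcing term; the trouble is the unbounded input term $B_2 B_2^* p$, which a priori only lies in $L^2(0,T;(D(A^*))^\prime)$. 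So I would split $y^* = y_1 + y_2$, where $y_1$ solves $y_1' = A y_1 + (-\gamma^{-2})B_1 B_1^* p$, $y_1(0)=y_0$, and $y_2$ solves $y_2' = A y_2 + B_2 B_2^* p$, $y_2(0)=0$. For $y_1$: analyticity of $e^{At}$ plus the $L^2$-in-time source plus $y_0\in H$ gives, via the parabolic smoothing estimate $\|A e^{At}\|_{L(H,H)}\le C t^{-1}$, that $y_1\in C([0,\infty);H)$ and $y_1\in W^{1,2}(\delta,T;H)$ for $0<\delta\le T<\infty$ (one loses integrability only near $t=0$ from the $e^{At}y_0$ part unless $y_0\in D(A)$, hence the $\delta>0$). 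For $y_2$: this is exactly the mild solution whose $L^2(0,T;H)$-membership was established in \eqref{16-0} using $(i_2)$; to upgrade it to $C([0,\infty);H)$ I would use that $u^*(t)=B_2^*p(t)$ with $p\in C([0,\infty);H)$, so $u^*\in C([0,\infty);U)$, and then invoke the $L^1$-admissibility $(i_2)$ together with the analytic-semigroup smoothing to get continuity of $t\mapsto \int_0^t e^{A(t-s)}B_2 u^*(s)\,ds$ in $H$; for $W^{1,2}(\delta,T;H)$ one differentiates and uses that $A y_2 = y_2' - B_2 B_2^* p$, controlling $y_2'$ on $[\delta,T]$ by the smoothing estimate since $p$ is bounded there. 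Adding, $y^*=y_1+y_2$ has the asserted regularity \eqref{3-35-2}.

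The main obstacle is the $B_2 B_2^*p$ term: one must genuinely use the parabolic (analytic-semigroup) structure and the $L^1$-admissibility $(i_2)$ — not just the a priori $(D(A^*))^\prime$-valued bound — to bring $y^*$ back into $C([0,\infty);H)$ and to make sense of $y^{*\prime}$ away from $t=0$. A secondary, routine point is tracking why the time derivative is only in $L^2(\delta,T;H)$ and not on all of $(0,T)$: the culprit is the initial layer from $e^{At}y_0$ when $y_0\notin D(A)$, which is harmless since $\delta>0$ is allowed. Everything else — the convolution estimates, Datko's theorem for the $L^2$-in-time bounds already in hand, and the continuity assertions — is standard analytic-semigroup bookkeeping, so I would keep those steps brief and reference \eqref{16-0} and the estimate \eqref{11-2} for the $L^2$-in-time facts already proved.
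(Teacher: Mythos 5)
Your argument for (\ref{3-35-1}) is exactly the paper's: $A^{\ast }$ generates an analytic semigroup, the source $C_{1}^{\ast }C_{1}y^{\ast }$ is in $L^{2}(\mathbb{R}_{+};H)$, and maximal $L^{2}$-regularity gives the claim. The decomposition of $y^{\ast }$ into $e^{At}y_{0}$, the $B_{1}w^{\ast }$-convolution and the $B_{2}B_{2}^{\ast }p$-convolution is also the paper's starting point, and your handling of the first two pieces is fine. The gap is in your treatment of $g_{2}(t)=\int_{0}^{t}e^{A(t-s)}B_{2}B_{2}^{\ast }p(s)\,ds$.

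First, the claim that $u^{\ast }=B_{2}^{\ast }p\in C([0,\infty );U)$ because $p\in C([0,\infty );H)$ is unjustified: by $(i_{1})$ one only has $B_{2}^{\ast }\in L(D(A^{\ast }),U)$, so continuity of $p$ with values in $H$ gives nothing; what (\ref{3-35-1}) actually yields is $u^{\ast }\in L^{2}(0,T;U)$. Second, and more seriously, ``$L^{1}$-admissibility plus analytic smoothing'' produces at best the duality bound of (\ref{16-0}), i.e.\ $g_{2}\in L^{2}(0,T;H)$ (and with more care boundedness), but it does not produce a time derivative: the identity $Ay_{2}=y_{2}^{\prime }-B_{2}B_{2}^{\ast }p$ holds only in $(D(A^{\ast }))^{\prime }$, the forcing is not $H$-valued, and ``controlling $y_{2}^{\prime }$ by the smoothing estimate'' presupposes what is to be proved. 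The ingredient you are missing --- and which the paper uses --- is to exploit the full strength of (\ref{3-35-1}): since $p\in W^{1,2}(0,T;H)\cap L^{2}(0,T;D(A^{\ast }))$, one has $B_{2}B_{2}^{\ast }p\in W^{1,2}(0,T;(D(A^{\ast }))^{\prime })$, hence one may write $B_{2}B_{2}^{\ast }p=(A-\omega I)f$ with $f\in W^{1,2}(0,T;H)$ for $\omega $ large, and integrate by parts in the Duhamel formula:
\begin{equation*}
g_{2}(t)=-\omega \int_{0}^{t}e^{A(t-s)}f(s)\,ds-f(t)+e^{At}f(0)+\int_{0}^{t}e^{A(t-s)}f^{\prime }(s)\,ds.
\end{equation*}
Every term now has $H$-valued data: the convolution with $f^{\prime }\in L^{2}(0,T;H)$ lies in $W^{1,2}(0,T;H)$ by analyticity, and $e^{At}f(0)$ (with $f(0)\notin D(A)$ in general) is, together with $e^{At}y_{0}$, the reason the derivative is only controlled on $(\delta ,T)$ with $\delta >0$. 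Without this integration by parts, or an equivalent device that uses the $W^{1,2}$-in-time regularity of $p$ rather than only its continuity, the $W^{1,2}(\delta ,T;H)$ assertion for the $B_{2}$-term is not established.
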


\begin{proof}
Since $A^{\ast }$ generates an analytic $C_{0}$-semigroup and $C_{1}^{\ast
}C_{1}y^{\ast }\in L^{2}(\mathbb{R}_{+};H)$ we see by (\ref{217}) that (\ref%
{3-35-1}) holds. Moreover, by (\ref{216}) and (\ref{218}) we have%
\begin{eqnarray}
y^{\ast }(t) &=&e^{At}y_{0}+\int_{0}^{t}e^{A(t-s)}B_{1}w^{\ast
}(s)ds+\int_{0}^{t}e^{A(t-s)}B_{2}B_{2}^{\ast }p(s)ds  \label{3-35-5} \\
&=&e^{At}y_{0}+g_{1}(t)+g_{2}(t),\mbox{ }\forall t\geq 0.  \notag
\end{eqnarray}%
The first two terms are in $C([0,\infty );H)\cap W^{1,2}(\delta ,T;H).$ By (%
\ref{3-35-1}), $B_{2}B_{2}^{\ast }p\in W^{1,2}(0,T;(D(A^{\ast }))^{\prime })$
and so we may represent it as $B_{2}B_{2}^{\ast }p=(A-\omega I)f,$ with $%
f\in W^{1,2}(0,T;H),$ for $\omega $ sufficiently large. This yields%
\begin{eqnarray*}
g_{2}(t) &=&\int_{0}^{t}e^{A(t-s)}(A-\omega I)f(s)ds=-\omega
\int_{0}^{t}e^{A(t-s)}f(s)ds-\int_{0}^{t}\left( \frac{d}{ds}%
e^{A(t-s)}\right) f(s)ds \\
&=&-\omega
\int_{0}^{t}e^{A(t-s)}f(s)ds-f(t)+e^{At}f(0)+\int_{0}^{t}e^{A(t-s)}f^{\prime
}(s)ds,\mbox{ }\forall t\geq 0.
\end{eqnarray*}%
Since $e^{At}$ is an analytic semigroup it follows that $g(t)=%
\int_{0}^{t}e^{A(t-s)}f^{\prime }(s)ds$, the solution to $g^{\prime
}(t)=Ag(t)+f(t),$ $g(0)=0\in D(A),$ is in $W^{1,2}(0,T;H),$ as the first two
terms. Though $f(0)\notin D(A)$, the third term is in $C([0,\infty );H)\cap
W^{1,2}(\delta ,T;H),$ $\forall 0<\delta \leq T<\infty $ and so is $g_{2}$
and $y^{\ast },$ too. Moreover, since $A^{\ast }$ is analytic, then (\ref%
{3-35-1}) holds.
\end{proof}

\noindent \textit{Proof} (of Theorem \ref{Th-main}, continued). Now we set 
\begin{equation}
Py_{0}:=-p(0),\mbox{ for }y_{0}\in H  \label{220}
\end{equation}%
and note that $P\in L(H,H).$

Moreover, by adding (\ref{216}) multiplied by $p(t)$ with (\ref{217})
multiplied by $y^{\ast }(t)$ and integrating on $(0,\infty )$ we get%
\begin{eqnarray}
-2(y_{0},p(0))_{H} &=&\int_{0}^{\infty }\left\{ \left\langle Ay^{\ast
}(t),p(t)\right\rangle _{(D(A^{\ast }))^{\prime },D(A^{\ast })}+(w^{\ast
}(t),B_{1}^{\ast }p(t))_{W}+(u^{\ast }(t),B_{2}^{\ast }p(t))_{U}\right\} dt 
\notag \\
&&+\int_{0}^{\infty }\left\{ -\left\langle Ay^{\ast }(t),p(t)\right\rangle
_{(D(A^{\ast }))^{\prime },D(A^{\ast })}+(C_{1}^{\ast }C_{1}y^{\ast
}(t),p(t))_{H}\right\} dt  \label{220-1} \\
&=&\int_{0}^{\infty }\left\{ -\gamma ^{2}\left\Vert w^{\ast }(t)\right\Vert
_{W}^{2}+\left\Vert u^{\ast }(t)\right\Vert _{U}^{2}+\left\Vert C_{1}y^{\ast
}(t)\right\Vert _{Z}^{2}\right\} dt  \notag
\end{eqnarray}%
whence 
\begin{eqnarray*}
(Py_{0},y_{0})_{H} &=&-(p(0),y_{0})_{H}=\frac{1}{2}\int_{0}^{\infty }\left(
\left\Vert C_{1}y^{\ast }(t)\right\Vert _{Z}^{2}+\left\Vert u^{\ast
}(t)\right\Vert _{U}^{2}-\gamma ^{2}\left\Vert w^{\ast }(t)\right\Vert
_{W}^{2}\right) dt \\
&=&\sup_{w\in L^{2}(\mathbb{R}_{+};W)}\inf_{u\in L^{2}(\mathbb{R}_{+};U)}%
\frac{1}{2}\int_{0}^{\infty }\left( \left\Vert C_{1}y(t)\right\Vert
_{Z}^{2}+\left\Vert u(t)\right\Vert _{U}^{2}-\gamma ^{2}\left\Vert
w(t)\right\Vert _{W}^{2}\right) dt \\
&\geq &\inf_{u\in L^{2}(\mathbb{R}_{+};U)}\frac{1}{2}\int_{0}^{\infty
}\left( \left\Vert C_{1}y(t)\right\Vert _{Z}^{2}+\left\Vert u(t)\right\Vert
_{U}^{2}\right) dt\geq 0,
\end{eqnarray*}%
hence $P\geq 0.$

Moreover, $P=P^{\ast }.$ Indeed, let $y_{0},$ $z_{0}\in H$ and $(y^{\ast
},p),$ $(z^{\ast },q)$ be the corresponding solutions to (\ref{216})-(\ref%
{219}). Namely, $(z^{\ast },q)$ satisfy 
\begin{equation*}
z^{\ast \prime }(t)=Az^{\ast }(t)+B_{1}w^{\ast }(t)+B_{2}u^{\ast }(t),\mbox{ 
}t\in \mathbb{R}_{+},\mbox{ }y^{\ast }(0)=y_{0},
\end{equation*}%
\begin{equation*}
q^{\prime }(t)=-A^{\ast }q(t)+C_{1}^{\ast }C_{1}z^{\ast }(t),\mbox{ }t\in 
\mathbb{R}_{+}.
\end{equation*}%
We see that 
\begin{equation*}
\frac{d}{dt}(p(t),z^{\ast }(t))_{H}=\frac{d}{dt}(q(t),y^{\ast }(t))_{H},%
\mbox{ }\forall t\geq 0
\end{equation*}%
and this yields $(Py_{0},z_{0})_{H}=(y_{0},Pz_{0})_{H},$ as claimed.

We recall that by the dynamic programming principle (see e.g., \cite%
{VB-book-94}, p. 104), the minimization problem (\ref{201}) for $w=w^{\ast
}, $ is equivalent with the following problem 
\begin{equation*}
\inf_{u\in L^{2}(\mathbb{R}_{+};U)}\frac{1}{2}\int_{t}^{\infty }\left(
\left\Vert C_{1}y(s)\right\Vert _{Z}^{2}+\left\Vert u(s)\right\Vert
_{U}^{2}-\gamma ^{2}\left\Vert w^{\ast }(s)\right\Vert _{W}^{2}\right) ds
\end{equation*}%
subject to (\ref{1})-(\ref{2}) in $S_{t}=\{(t,\infty );$ $y(t)=y^{\ast
}(t)\},$ for every $t\geq 0.$ Since $u^{\ast }$ is the solution to this
problem it follows by (\ref{220}) that 
\begin{equation}
p(t)=-Py^{\ast }(t),\mbox{ }\forall t\geq 0.  \label{221}
\end{equation}%
We denote by $T_{P}(t):H\rightarrow H$ the family of operators 
\begin{equation}
T_{P}(t)y_{0}=y^{\ast }(t),\mbox{ }\forall t\geq 0  \label{222}
\end{equation}%
where $y^{\ast }(t)$ is the solution to (\ref{216}) with $u^{\ast }$ and $%
w^{\ast }$ given by (\ref{217})-(\ref{219}). By (\ref{3-35-2}) it follows
that $T_{P}(t)$ is a $C_{0}$-semigroup on $H.$

Let us denote by $A_{P}$ the infinitesimal generator of $T_{P}(t),$ that is%
\begin{equation}
\frac{dy^{\ast }}{dt}(t)=A_{P}y^{\ast }(t),\mbox{ }\forall t\geq 0,\mbox{ }%
y^{\ast }(0)=y_{0},  \label{222-1}
\end{equation}%
or, equivalently 
\begin{equation}
y^{\ast }(t)=e^{A_{P}t}y_{0},\mbox{ }t\geq 0,\mbox{ }\forall y_{0}\in H.
\label{222-2}
\end{equation}%
If $y_{0}\in D(A_{P})$ we have 
\begin{equation}
y^{\ast }\in C^{1}([0,T];H)\cap C([0,T];D(A_{P})),\mbox{ }\forall T>0.
\label{3-43-1}
\end{equation}%
Here, $D(A_{P})=\{y\in H;$ $A_{P}y\in H\}$ is the domain of $A_{P}$. The
space $\mathcal{X}$ in Theorem \ref{Th-main} is actually%
\begin{equation}
\mathcal{X}:=D(A_{P}).  \label{3-43-2}
\end{equation}%
Now, replacing in the right-hand side of (\ref{216}) $u^{\ast }$ and $%
w^{\ast }$ by (\ref{218})-(\ref{219}), (\ref{217}) and (\ref{221}) we get 
\begin{equation}
y^{\ast \prime }(t)=\widetilde{\Lambda _{P}}y^{\ast }(t)  \label{216-1}
\end{equation}%
where $\widetilde{\Lambda _{P}}$ is the operator 
\begin{equation*}
\widetilde{\Lambda _{P}}:H\rightarrow (D(A^{\ast }))^{\prime },\mbox{ }%
\widetilde{\Lambda _{P}}y=Ay-B_{2}B_{2}^{\ast }Py+\gamma
^{-2}B_{1}B_{1}^{\ast }Py\in (D(A^{\ast }))^{\prime }
\end{equation*}%
and $A$ is the extension from $H$ to $(D(A^{\ast }))^{\prime }.$

We define by $\Lambda _{P}:D(\Lambda _{P})\subset H\rightarrow H$ the
restriction of the operator $\widetilde{\Lambda _{P}}$ to $H,$ namely 
\begin{eqnarray}
\Lambda _{P}y &=&(A-B_{2}B_{2}^{\ast }P+\gamma ^{-2}B_{1}B_{1}^{\ast }P)y,%
\mbox{ }y\in D(\Lambda _{P}),  \label{222-3} \\
D(\Lambda _{P}) &=&\{y\in H;\mbox{ }(A-B_{2}B_{2}^{\ast }P+\gamma
^{-2}B_{1}B_{1}^{\ast }P)y\in H\}.  \notag
\end{eqnarray}

\begin{lemma}
\label{operators}We have 
\begin{equation}
P\in L(\mathcal{X},D(A^{\ast })),  \label{223}
\end{equation}%
\begin{equation}
B_{2}^{\ast }P\in L(\mathcal{X};U),  \label{223-1}
\end{equation}%
\begin{equation}
A_{P}y=\Lambda _{P}y,\mbox{ for all }y\in \mathcal{X}\subset D(\Lambda _{P})
\label{224}
\end{equation}%
and $\Lambda _{P}$ generates a $C_{0}$-semigroup on $H.$

Moreover, if $\Lambda _{P}$ is closed in $H$, then 
\begin{equation}
\mathcal{X}=D(A_{P})=D(\Lambda _{P}).  \label{224-1}
\end{equation}
\end{lemma}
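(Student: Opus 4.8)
The plan is to extract all four assertions from the optimality system \eqref{216}--\eqref{219} together with the semigroup identities \eqref{221}--\eqref{222-2} and the regularity \eqref{3-35-2}--\eqref{3-35-1}. First I would establish \eqref{223}. Take $y_0\in\mathcal{X}=D(A_P)$; then by \eqref{3-43-1} the optimal trajectory satisfies $y^{\ast}\in C^1([0,T];H)\cap C([0,T];D(A_P))$, so in particular $y^{\ast\prime}(0)=A_Py_0\in H$. Evaluating the dual equation \eqref{217} at a generic $t$ and using \eqref{221}, namely $p(t)=-Py^{\ast}(t)$, I would differentiate: from $p\in W^{1,2}(0,T;H)\cap L^2(0,T;D(A^{\ast}))$ and the fact that $p(t)=-Py^{\ast}(t)$ with $y^{\ast}$ continuous into $D(A_P)$, one reads off $p(0)=-Py_0\in D(A^{\ast})$ because \eqref{217} rearranges to $A^{\ast}p(t)=C_1^{\ast}C_1y^{\ast}(t)-p'(t)$, whose right-hand side is continuous in $t$ near $0$; hence $A^{\ast}p(0)\in H$, i.e. $Py_0\in D(A^{\ast})$. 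The closed graph theorem then upgrades this to $P\in L(\mathcal{X},D(A^{\ast}))$, since $P$ is already bounded $H\to H$ and $\mathcal{X}$ carries the graph norm of $A_P$. Assertion \eqref{223-1} is then immediate: $B_2^{\ast}\in L(D(A^{\ast}),U)$ by duality from $B_2\in L(U,(D(A^{\ast}))')$, so $B_2^{\ast}P\in L(\mathcal{X},U)$ as a composition.

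Next I would prove \eqref{224}. For $y_0\in\mathcal{X}$, differentiate \eqref{216} using \eqref{218}--\eqref{219} and $p(t)=-Py^{\ast}(t)$ to obtain $y^{\ast\prime}(t)=Ay^{\ast}(t)-B_2B_2^{\ast}Py^{\ast}(t)+\gamma^{-2}B_1B_1^{\ast}Py^{\ast}(t)=\widetilde{\Lambda_P}y^{\ast}(t)$ in $(D(A^{\ast}))'$, which is \eqref{216-1}. But the left-hand side equals $A_Py^{\ast}(t)\in H$ by \eqref{222-1} and \eqref{3-43-1}; evaluating at $t=0$ gives $A_Py_0=\widetilde{\Lambda_P}y_0$ with both sides in $H$, so $y_0\in D(\Lambda_P)$ and $A_Py_0=\Lambda_Py_0$. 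This shows $\mathcal{X}\subset D(\Lambda_P)$ and the two operators agree there. Since $A_P$ generates the $C_0$-semigroup $T_P(t)$ on $H$ and $\Lambda_P$ restricted to $\mathcal{X}=D(A_P)$ coincides with $A_P$, the operator $\Lambda_P$ (on its natural domain) is an extension of a generator; the generation claim for $\Lambda_P$ follows once we know $\mathcal{X}$ is a core for $A_P$, which holds because $\mathcal{X}=D(A_P)$ is the full domain, so $\Lambda_P\supset A_P$ generates in the sense that its restriction to $D(A_P)$ generates $T_P(t)$. I would phrase the conclusion as: $\Lambda_P$ with domain $\mathcal{X}$ generates the $C_0$-semigroup $T_P(t)$ on $H$.

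Finally, for \eqref{224-1}, suppose $\Lambda_P$ (with the maximal domain $D(\Lambda_P)$ from \eqref{222-3}) is closed in $H$. We already have $\mathcal{X}=D(A_P)\subset D(\Lambda_P)$ with $A_P=\Lambda_P$ on $\mathcal{X}$. To get the reverse inclusion $D(\Lambda_P)\subset D(A_P)$, I would argue that $A_P$ is the part of $\widetilde{\Lambda_P}$ in $H$ in the sense that $A_P$ is a closed restriction of $\Lambda_P$ which generates a $C_0$-semigroup; since a generator is in particular closed and, by the standard characterization, $\lambda\in\rho(A_P)$ for $\lambda$ in a right half-plane, the resolvent $(\lambda-A_P)^{-1}\colon H\to\mathcal{X}$ is a bijection. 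If $y\in D(\Lambda_P)$, set $f=(\lambda-\Lambda_P)y\in H$ and $\tilde y=(\lambda-A_P)^{-1}f\in\mathcal{X}\subset D(\Lambda_P)$; then $(\lambda-\Lambda_P)(y-\tilde y)=0$. Injectivity of $\lambda-\Lambda_P$ — which follows from closedness of $\Lambda_P$ together with the dissipativity-type estimate inherited from $A_P$ on the dense core $\mathcal{X}$, extended to all of $D(\Lambda_P)$ by closedness — forces $y=\tilde y\in\mathcal{X}$. Hence $D(\Lambda_P)\subset D(A_P)$, giving \eqref{224-1}. The main obstacle I anticipate is precisely this last injectivity/surjectivity step: one must be careful that the a priori estimate $\|(\lambda-\Lambda_P)y\|_H\geq c\|y\|_H$ valid on the core $\mathcal{X}$ propagates to the closure, which is exactly where the hypothesis "$\Lambda_P$ closed in $H$" is used; without it, $D(\Lambda_P)$ could be strictly larger than $D(A_P)$.
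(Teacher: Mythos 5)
Your treatment of \eqref{223}, \eqref{223-1} and \eqref{224} matches the paper's argument in all essentials: $Py_0\in D(A^{\ast})$ is read off from the dual equation \eqref{217} once $y^{\ast}\in C^1([0,T];H)$ is known, \eqref{223-1} follows by composition with $B_2^{\ast}\in L(D(A^{\ast}),U)$, and \eqref{224} is obtained by identifying the two representations of $y^{\ast\prime}$ at $t=0$ (the paper does this through the weak formulation against test functions $\varphi\in D(A^{\ast})$ rather than by evaluating the strong equation at $t=0$, but this is cosmetic). The generation claim for $\Lambda_P$ is handled identically.

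The last assertion \eqref{224-1} is where you diverge from the paper and where there is a genuine gap. You reduce $D(\Lambda_P)\subset D(A_P)$ to injectivity of $\lambda-\Lambda_P$ on $D(\Lambda_P)$, and you propose to get injectivity by extending the Hille--Yosida lower bound $\left\Vert(\lambda-\Lambda_P)z\right\Vert_H\ge c\left\Vert z\right\Vert_H$, valid for $z\in\mathcal{X}$, to all of $D(\Lambda_P)$ ``by closedness''. Closedness of $\Lambda_P$ does not deliver this: it says the graph of $\Lambda_P$ is closed, not that $\mathcal{X}$ is dense in $D(\Lambda_P)$ for the graph norm, and graph-norm density is exactly what propagating the estimate would require. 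Indeed, if $\mathcal{X}$ were a core for $\Lambda_P$ in that sense, then $\Lambda_P=\overline{A_P|_{\mathcal{X}}}=A_P$ outright (since $A_P$, being a generator, is closed) and there would be nothing left to prove; so the step you gloss over is equivalent to the statement itself, and calling $\mathcal{X}$ a ``dense core'' begs the question. The paper avoids this circularity by a duality argument: for $y_0\in D(\Lambda_P)$ it takes $y_0^n\in D(A_P)$ with $y_0^n\to y_0$ in $H$, rewrites $(A_Py_0^n,\varphi)_H=(\Lambda_Py_0^n,\varphi)_H$ as $(y_0^n,A_P^{\ast}\varphi)_H=(\Lambda_Py_0^n,\varphi)_H$ for $\varphi\in D(A_P^{\ast})$, passes to the limit, and concludes from the boundedness of the functional $\varphi\mapsto(y_0,A_P^{\ast}\varphi)_H$ that $y_0\in D(A_P^{\ast\ast})=D(A_P)$. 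If you wish to keep your resolvent formulation, the injectivity of $\lambda-\Lambda_P$ should likewise be established by testing $(\lambda-\Lambda_P)y=0$ against $D(A_P^{\ast})$, not by a density-of-the-core argument.
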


\begin{proof}
Let $y_{0}\in D(A_{P})$. We know by (\ref{3-43-1}) that $y^{\ast }\in
C^{1}([0,T];H)$ and $A_{P}y^{\ast }\in C([0,T];H)$ for all $T>0,$ and so by (%
\ref{217}) it follows therefore that $p^{\prime }\in C([0,T];H)$ and so $%
A^{\ast }p\in C([0,T];H).$ Hence, $A^{\ast }p(0)\in H.$ It follows that $%
p(0)\in D(A^{\ast })$ and so $Py_{0}\in D(A^{\ast }).$ This implies (\ref%
{223}). Since $P\in (\mathcal{X},D(A^{\ast }))$ and $B_{2}^{\ast }\in
L(D(A^{\ast }),U)$ it follows (\ref{223-1}).

We have by (\ref{222-1}) and (\ref{3-43-1}) that 
\begin{equation*}
\frac{d}{dt}(y^{\ast }(t),\varphi )_{H}=(A_{P}y^{\ast }(t),\varphi )_{H},%
\mbox{ }\forall t\geq 0,\mbox{ }\varphi \in H.
\end{equation*}%
On the other hand, by (\ref{216-1}) we have (see the weak form (\ref{13-1})
applied to $\widetilde{\Lambda _{P}}:H\rightarrow (D(A^{\ast }))^{\prime }$) 
\begin{equation*}
\frac{d}{dt}(y^{\ast }(t),\varphi )_{H}=\left\langle \frac{dy^{\ast }}{dt}%
(t),\varphi )\right\rangle _{(D(A^{\ast }))^{\prime },D(A^{\ast
})}=\left\langle \widetilde{\Lambda _{P}}y^{\ast }(t),\varphi \right\rangle
_{(D(A^{\ast }))^{\prime },D(A^{\ast })},\mbox{ }\forall t\geq 0,\mbox{ }%
\forall \varphi \in D(A^{\ast }).
\end{equation*}%
Hence, 
\begin{equation*}
(A_{P}y^{\ast }(t),\varphi )_{H}=\left\langle \widetilde{\Lambda _{P}}%
y^{\ast }(t),\varphi \right\rangle _{(D(A^{\ast }))^{\prime },D(A^{\ast })},%
\mbox{ }\forall t\geq 0,\mbox{ }\forall \varphi \in D(A^{\ast }).
\end{equation*}%
Recalling that $y^{\ast }\in C^{1}([0,\infty );H)\subset C([0,\infty
);(D(A^{\ast }))^{\prime })$ and letting $t\rightarrow 0$ we get 
\begin{equation*}
(A_{P}y_{0},\varphi )_{H}=\left\langle \widetilde{\Lambda _{P}}y_{0},\varphi
\right\rangle _{(D(A^{\ast }))^{\prime },D(A^{\ast })},\mbox{ }\forall
\varphi \in D(A^{\ast }).
\end{equation*}%
This implies that $\widetilde{\Lambda _{P}}y_{0}\in H,$ namely $y_{0}\in
D(\Lambda _{P}),$ and $A_{P}y_{0}=\Lambda _{P}y_{0}$ on $D(A_{P})\subset
D(\Lambda _{P}),$ that is (\ref{224}).

Since these two operators coincide on $D(A_{P})$ then $\Lambda _{P}$
generates a $C_{0}$-semigroup on $H.$

Now, $D(A_{P})\subset D(\Lambda _{P})\subset H$ and since $D(A_{P})$ is
dense in $H$ it follows that $D(\Lambda _{P})$ is dense in $H$ and $D(A_{P})$
is dense in $D(\Lambda _{P}).$

Assume that $\Lambda _{P}$ is closed and let $y_{0}\in D(\Lambda _{P}).$
There exists $(y_{0}^{n})_{n}\subset D(A_{P})$, $y_{0}^{n}\rightarrow y_{0}$
in $H$ and by (\ref{224}) we have%
\begin{equation*}
(A_{P}y_{0}^{n},\varphi )_{H}=\left( \Lambda _{P}y_{0}^{n},\varphi \right)
_{H},\mbox{ }\varphi \in H,
\end{equation*}%
which implies (using the adjoint of $A_{P}^{\ast }$ which is the generator
of a $C_{0}$-semigroup on $H)$ that 
\begin{equation*}
\left( y_{0}^{n},A_{P}^{\ast }\varphi \right) _{H}=\left( \Lambda
_{P}y_{0}^{n},\varphi \right) _{H},\mbox{ }\varphi \in D(A_{P}^{\ast
})\subset H.
\end{equation*}%
Since $\Lambda _{P}$ is closed, by letting $n\rightarrow \infty $ we obtain%
\begin{equation*}
(y_{0},A_{P}^{\ast }\varphi )_{H}=\left( \Lambda _{P}y_{0},\varphi \right)
_{H},\mbox{ }\varphi \in D(A_{P}^{\ast }).
\end{equation*}%
Then, $\varphi \rightarrow (y_{0},A_{P}^{\ast }\varphi )_{H}$ is a linear
continuous functional on $H$ and $\left\vert (y_{0},A_{P}^{\ast }\varphi
)_{H}\right\vert \leq C\left\Vert \varphi \right\Vert _{H},$ so that $%
y_{0}\in D(A_{P})$ and (\ref{224}) is proved.
\end{proof}

\begin{proof}
(of Theorem \ref{Th-main}, continued). To prove that $P$ is a solution to
the Riccati equation (\ref{15}) we use the relation 
\begin{equation*}
\frac{d}{dt}(y^{\ast }(t),p(t))_{H}=\left\langle (y^{\ast })^{\prime
}(t),p(t)\right\rangle _{(D(A^{\ast }))^{\prime },D(A^{\ast })}+(y^{\ast
}(t),p^{\prime }(t))_{H}
\end{equation*}%
and calculate by (\ref{216})-(\ref{219}) and (\ref{221}) a relation as done
for (\ref{220-1}) but integrating from $t$ to $\infty $. We get 
\begin{eqnarray*}
(Py^{\ast }(t),y^{\ast }(t))_{H} &=&(-p(t),y^{\ast }(t))_{H} \\
&=&\frac{1}{2}\int_{t}^{\infty }\left( \left\Vert C_{1}y^{\ast
}(t)\right\Vert _{Z}^{2}+\left\Vert u^{\ast }(t)\right\Vert _{U}^{2}-\gamma
^{2}\left\Vert w^{\ast }(t)\right\Vert _{W}^{2}\right) dt,\mbox{ }t\geq 0.
\end{eqnarray*}%
If $y_{0}\in D(A_{P})$ this implies by differentiating (by using (\ref{222-1}%
) and (\ref{3-43-1})) that 
\begin{eqnarray*}
&&(Py^{\ast }(t),A_{P}y^{\ast }(t))_{H}+(PA_{P}y^{\ast }(t),y^{\ast
}(t))_{H}+\left\Vert C_{1}y^{\ast }(t)\right\Vert _{Z}^{2} \\
&&+\left\Vert B_{2}^{\ast }Py^{\ast }(t)\right\Vert _{U}^{2}-\gamma
^{2}\left\Vert \gamma ^{-2}B_{1}^{\ast }Py^{\ast }(t)\right\Vert _{W}^{2}=0,%
\mbox{ }t\geq 0
\end{eqnarray*}%
and since, by (\ref{223}), $B_{2}^{\ast }P\in L(D(A_{P}),D(A^{\ast }))$ we
obtain for $t\rightarrow 0$ the equation 
\begin{equation}
2(Py_{0},A_{P}y_{0})_{H}+\left\Vert C_{1}y_{0}\right\Vert
_{Z}^{2}+\left\Vert B_{2}^{\ast }Py_{0}\right\Vert _{U}^{2}-\gamma
^{-2}\left\Vert B_{1}^{\ast }Py_{0}\right\Vert _{W}^{2}=0,\mbox{ }\forall
y_{0}\in D(A_{P}).  \label{225}
\end{equation}%
By differentiating along $z\in D(A_{P})$ we get 
\begin{equation*}
(Py_{0},A_{P}z)_{H}+(Pz,A_{P}y_{0})_{H}+((B_{2}B_{2}^{\ast }-\gamma
^{-2}B_{1}B_{1}^{\ast })Pz,Py_{0})_{H}+(C_{1}^{\ast }C_{1}y_{0},z)_{H}=0,
\end{equation*}%
for all $y_{0,}$ $z\in D(A_{P}).$ But here $A_{P}y=\Lambda _{P}y$ for $y\in
D(A_{P})$ and we can replace $A_{P}$ by $\Lambda _{P}$ in the previous
equation obtaining after all calculations 
\begin{equation*}
\left( A^{\ast }Py_{0},z\right) _{H}+\left( P(A-B_{2}B_{2}^{\ast }+\gamma
^{-2}B_{1}B_{1}^{\ast })Py_{0},z\right) _{H}+(C_{1}^{\ast
}C_{1}y_{0},z)_{H}=0
\end{equation*}%
for all $y_{0,}$ $z\in D(A_{P}),$ namely (\ref{15}).

For proving that the semigroup $e^{\Lambda _{P}t\mbox{ }}$is exponentially
stable we use the detectability assumption $(i_{3}).$ Let us take $K\in
L(Z,H)$ and write eq. (\ref{216}) in the following form 
\begin{equation*}
y^{\ast \prime }(t)=(A+KC_{1})y^{\ast }(t)+B_{2}u^{\ast }(t)+B_{1}w^{\ast
}(t)-KC_{1}y^{\ast }(t),\mbox{ }t\geq 0,
\end{equation*}%
or equivalently, 
\begin{eqnarray*}
y^{\ast }(t)
&=&e^{(A+KC_{1})t}y_{0}+\int_{0}^{t}e^{(A+KC_{1})(t-s)}(B_{2}u^{\ast
}(s)+B_{1}w^{\ast }(s))ds \\
&&-\int_{0}^{t}e^{(A+KC_{1})(t-s)}KC_{1}y^{\ast }(s)ds,\mbox{ for all }t\geq
0.
\end{eqnarray*}%
Since $B_{1}w^{\ast },$ $KC_{1}y^{\ast }\in L^{2}(\mathbb{R}_{+};H)$ and $%
e^{(A+KC_{1})t}$ is exponentially stable it remains to show that 
\begin{equation}
t\rightarrow \int_{0}^{t}e^{(A+KC_{1})(t-s)}B_{2}u^{\ast }(s)ds\in L^{2}(%
\mathbb{R}_{+};H).  \label{225-1}
\end{equation}%
To this end, for each $\psi \in L^{2}(\mathbb{R}_{+};H),$ using the Young's
inequality (\ref{400}) (with $p=1,$ $q=r=2)$ and (\ref{12-0}) we calculate 
\begin{eqnarray*}
&&\int_{0}^{\infty }\left( \psi
(t),\int_{0}^{t}e^{(A+KC_{1})(t-s)}B_{2}u^{\ast }(s)ds\right) _{H}dt \\
&=&\int_{0}^{\infty }\left( \int_{s}^{\infty }B_{2}^{\ast }e^{(A^{\ast
}+C_{1}^{\ast }K^{\ast })(t-s)}\psi (t)dt,u^{\ast }(s)\right) _{U}ds \\
&\leq &\left( \int_{0}^{\infty }\left( \int_{s}^{\infty }\left\Vert
B_{2}^{\ast }e^{(A^{\ast }+C_{1}^{\ast }K^{\ast })(t-s)}\psi (t)\right\Vert
_{U}dt\right) ^{2}ds\right) ^{1/2}\left( \int_{0}^{\infty }\left\Vert
u^{\ast }(s)\right\Vert _{U}^{2}ds\right) ^{1/2} \\
&\leq &\left\Vert u^{\ast }\right\Vert _{L^{2}(0,\infty ;U)}\left(
\int_{0}^{\infty }\left( \int_{0}^{\infty }\left\Vert B_{2}^{\ast
}e^{(A^{\ast }+C_{1}^{\ast }K^{\ast })(t-s)}\right\Vert _{L(H,U)}\left\Vert
\psi (t)\right\Vert _{H}dt\right) ^{2}ds\right) ^{1/2} \\
&\leq &\left\Vert u^{\ast }\right\Vert _{L^{2}(0,\infty ;U)}\left(
\int_{0}^{\infty }\left\Vert B_{2}^{\ast }e^{(A^{\ast }+C_{1}^{\ast }K^{\ast
})s}\right\Vert _{L(H,U)}ds\right) \left( \int_{0}^{\infty }\left\Vert \psi
(s)\right\Vert _{H}^{2}ds\right) ^{1/2} \\
&\leq &C\left\Vert u^{\ast }\right\Vert _{L^{2}(0,\infty ;U)}\left\Vert \psi
\right\Vert _{L^{2}(0,\infty ;U)}\leq C_{1}\left\Vert \psi \right\Vert
_{L^{2}(0,\infty ;U)},
\end{eqnarray*}%
and this implies (\ref{225-1}), as claimed.

We shall prove now that the operator $\Lambda _{P}^{1}:=A-B_{2}B_{2}^{\ast }P
$ generates an exponentially stable $C_{0}$-semigroup in $H$ with the domain 
$\{y\in H;$ $(A-B_{2}B_{2}^{\ast }P)y\in H\}=D(A_{P}).$ The solution $%
y^{\ast }(t)$ to (\ref{216}) is in $L^{2}(\mathbb{R}_{+};H)$ can be written
also as%
\begin{equation*}
y^{\ast }(t)=e^{\Lambda _{P}^{1}t}y_{0}+\gamma ^{-2}\int_{0}^{t}e^{\Lambda
_{P}^{1}(t-s)}B_{1}B_{1}^{\ast }Py^{\ast }(s)ds
\end{equation*}%
and since the second term on the right-hand side is in $L^{2}(\mathbb{R}%
_{+};H),$ it follows that $e^{\Lambda _{P}^{1}t}y_{0}\in L^{2}(\mathbb{R}%
_{+};H).$

Now, we shall prove (\ref{14-1}). Let us consider the equation 
\begin{equation}
y^{\prime }(t)=(A-B_{2}B_{2}^{\ast }P)y(t)+B_{1}w(t),\mbox{ }t\geq 0,\mbox{ }%
y(0)=0,  \label{225-2}
\end{equation}%
with $w\in L^{2}(\mathbb{R}_{+};W).$ As seen earlier, this equation has a
unique mild solution and by (\ref{13-1}) we have%
\begin{equation}
\frac{d}{dt}(y(t),\varphi )_{H}=(y(t),(A^{\ast }-B_{2}B_{2}^{\ast }P)\varphi
)_{H}+(B_{1}w(y),\varphi )_{H},\mbox{ }\forall \varphi \in D(A^{\ast }).
\label{3-49-1}
\end{equation}%
Let $p(t)=-Py(t),$ $t>0.$ Since by (\ref{223}) $P\in L(D(A_{P}),D(A^{\ast
})) $ it follows that $p(t)\in D(A^{\ast })$ and $p$ is the solution to eq. (%
\ref{217}) with $y^{\ast }$ replaced by $y.$ Moreover, as seen earlier by (%
\ref{217}) it follows that $A^{\ast }p,$ $p^{\prime }\in L^{2}(\mathbb{R}%
_{+};H)$ and we have by (\ref{3-49-1})%
\begin{equation*}
\frac{d}{dt}(y(t),p(t))_{H}=(y(t),(A^{\ast }-B_{2}B_{2}^{\ast
}P)p(t))_{H}+(B_{1}w(t),p(t))_{H}+(y^{\prime }(t),p(t))_{H}.
\end{equation*}%
Then we calculate using (\ref{221}) and (\ref{15}) 
\begin{eqnarray*}
&&\frac{d}{dt}(Py(t),y(t))_{H}=2(Py(t),y^{\prime }(t))_{H} \\
&=&2(Py(t),Ay(t))_{H}-2\left\Vert B_{2}^{\ast }Py(t)\right\Vert
_{H}^{2}+2(B_{1}w(t),Py(t))_{H} \\
&=&\left\Vert B_{2}^{\ast }Py(t)\right\Vert _{H}^{2}-\gamma ^{-2}\left\Vert
B_{1}^{\ast }Py(t)\right\Vert _{H}^{2}-\left\Vert C_{1}y(t)\right\Vert
_{H}^{2}-2\left\Vert B_{2}^{\ast }Py(t)\right\Vert
_{H}^{2}+2(B_{1}w(t),Py(t))_{H} \\
&=&-\left\Vert B_{2}^{\ast }Py(t)\right\Vert _{H}^{2}-\left\Vert
C_{1}y(t)\right\Vert _{H}^{2}-\gamma ^{-2}\left\Vert B_{1}^{\ast
}Py(t)\right\Vert _{H}^{2}+2(w(t),B_{1}^{\ast }Py(t))_{W},\mbox{ a.e. }t>0.
\end{eqnarray*}%
Integrating this from $0$ to $\infty $ we obtain 
\begin{equation*}
0=\int_{0}^{\infty }\left( -\left\Vert B_{2}^{\ast }Py(t)\right\Vert
_{H}^{2}-\left\Vert C_{1}y(t)\right\Vert _{H}^{2}-\gamma ^{-2}\left\Vert
B_{1}^{\ast }Py(t)\right\Vert _{H}^{2}+2(w(t),B_{1}^{\ast }Py(t))_{W}\right)
dt,
\end{equation*}%
since $y(0)=0$ and $\lim_{t\rightarrow \infty }(Py(t),y(t))_{H}=0.$
Therefore,%
\begin{eqnarray*}
&&\int_{0}^{\infty }\left( \left\Vert C_{1}y(t)\right\Vert
_{H}^{2}+\left\Vert B_{2}^{\ast }Py(t)\right\Vert _{H}^{2}\right) dt \\
&=&\int_{0}^{\infty }\left( -\gamma ^{-2}\left\Vert B_{1}^{\ast
}Py(t)\right\Vert _{H}^{2}+2(w(t),B_{1}^{\ast }Py(t))_{H}-\gamma
^{2}\left\Vert w(t)\right\Vert _{W}^{2}\right) dt+\int_{0}^{\infty }\gamma
^{2}\left\Vert w(t)\right\Vert _{W}^{2}dt \\
&=&\int_{0}^{\infty }\gamma ^{2}\left\Vert w(t)\right\Vert
_{W}^{2}dt-\int_{0}^{\infty }\gamma ^{2}\left\Vert \widetilde{w}%
(t)\right\Vert _{W}^{2}dt,
\end{eqnarray*}%
where 
\begin{equation}
\widetilde{w}(t)=w(t)-\gamma ^{-2}B_{1}^{\ast }Py(t).  \label{226}
\end{equation}%
If we prove that there exists $\alpha >0$ such that%
\begin{equation}
\left\Vert \widetilde{w}\right\Vert _{L^{2}(0,\infty ;W)}\geq \alpha
\left\Vert w\right\Vert _{L^{2}(0,\infty ;W)},\mbox{ }\forall w\in L^{2}(%
\mathbb{R}_{+};W),  \label{227}
\end{equation}%
it follows that%
\begin{equation*}
\gamma ^{2}\left( \left\Vert w\right\Vert _{L^{2}(\mathbb{R}%
_{+};W)}^{2}-\left\Vert \widetilde{w}\right\Vert _{L^{2}(\mathbb{R}%
_{+};W)}^{2}\right) \leq \gamma ^{2}(1-\alpha )\left\Vert w\right\Vert
_{L^{2}(\mathbb{R}_{+};W)}^{2}
\end{equation*}%
and therefore%
\begin{equation*}
\int_{0}^{\infty }\left( \left\Vert C_{1}y(t)\right\Vert _{H}^{2}+\left\Vert
B_{2}^{\ast }Py(t)\right\Vert _{H}^{2}\right) dt\leq (\gamma ^{2}-\delta
)\left\Vert w\right\Vert _{L^{2}(\mathbb{R}_{+};W)}^{2}
\end{equation*}%
with $\delta >0$ independent on $w.$ Therefore,%
\begin{equation*}
\int_{0}^{\infty }\left( \left\Vert C_{1}y(t)\right\Vert _{H}^{2}+\left\Vert
B_{2}^{\ast }Py(t)\right\Vert _{H}^{2}\right) dt\leq (\gamma ^{2}-\delta
)\int_{0}^{\infty }\left\Vert w(t)\right\Vert _{W}^{2}dt,
\end{equation*}%
which by (\ref{8-2}) implies (\ref{14-1}). We note that once (\ref{227})
proved, $\alpha $ can be chosen smaller such that $\alpha <1.$ It remains to
prove (\ref{227}) and this will be done in Lemma 3.6 given at the end of
this section.

Therefore, $G$ corresponding to $\widetilde{F}:=-B_{2}^{\ast }P$ has the
property $\left\Vert G_{\widetilde{F}}w\right\Vert _{L^{2}(\mathbb{R}%
_{+};Z)}<\gamma \left\Vert w\right\Vert _{L^{2}(\mathbb{R}_{+};W)},$ that is 
$\widetilde{F}$ is the feedback operator which solves the $H^{\infty }$%
-control problem. This ends the proof of the the first part of Theorem \ref%
{Th-main}.

Assume now that $P$ is a solution to equation (\ref{15}), satisfying (\ref%
{14-2}), such that $\Lambda _{P}=A-(B_{2}B_{2}^{\ast }-\gamma
^{-2}B_{1}B_{1}^{\ast })P$ generates an exponentially stable semigroup on $%
\mathcal{X}$. We set $y^{\ast }(t)=e^{\Lambda _{P}t}y_{0},$ for $y_{0}\in H,$
so that $y^{\ast }\in C([0,\infty );H)\cap L^{2}(\mathbb{R}_{+};H).$ Let us
define $p(t)=-Py^{\ast }(t),$ for $t\geq 0.$ Then, $p\in L^{2}(\mathbb{R}%
_{+};H)\cap C([0,\infty );H)$ and by replacing $Py^{\ast }(t)$ in (\ref{15})
we get that $p$ satisfies equation (\ref{217}) with the regularity obtained
in Lemma 3.4. The , as before. Finally, we show that the operator $\Lambda
_{P}^{1}$ generates an exponentially stable semigroup and that the
controller $\widetilde{F}y=-B_{2}^{\ast }Py$ stabilizes equation $y^{\prime
}(t)=(A+B_{2}\widetilde{F})y(t)+B_{1}w(t),$ $y(0)=0,$ arguing as before
beginning from (\ref{225-2}). This ends the proof of Theorem \ref{Th-main}.
\end{proof}

It remains to prove (\ref{227}). We set 
\begin{equation}
\Phi (w)=\left\Vert \widetilde{w}\right\Vert _{L^{2}(\mathbb{R}_{+};W)}^{2}.
\label{228}
\end{equation}

\begin{lemma}
We have 
\begin{equation}
\Phi (w)\geq \alpha \left\Vert w\right\Vert _{L^{2}(0,\infty ;W)}^{2},\mbox{
for all }w\in L^{2}(\mathbb{R}_{+};W),  \label{229}
\end{equation}%
where $\alpha >0.$
\end{lemma}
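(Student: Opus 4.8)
The plan is to prove $(229)$ by showing that the affine operator $w\mapsto \widetilde w$ defined through $(225\text{-}2)$ and $(226)$ possesses a bounded left inverse; the constant $\alpha$ will then be the reciprocal of the square of that inverse's norm. The engine of the argument is the exponential stability of the semigroup generated by $\Lambda _P$, which has already been established above by means of the detectability hypothesis $(i_3)$.

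First I would insert $(226)$, i.e. $w(t)=\widetilde w(t)+\gamma ^{-2}B_1^{\ast }Py(t)$, into equation $(225\text{-}2)$. Since $\Lambda _P=(A-B_2B_2^{\ast }P)+\gamma ^{-2}B_1B_1^{\ast }P$, this rewrites $(225\text{-}2)$ as
\[
y'(t)=\Lambda _P y(t)+B_1\widetilde w(t),\quad t\geq 0,\qquad y(0)=0 ,
\]
so that the very function $y$ produced from $w$ is the mild solution of the $\Lambda _P$-equation driven by $\widetilde w$, namely $y(t)=\int _0^t e^{\Lambda _P(t-s)}B_1\widetilde w(s)\,ds$. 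Using $\Vert e^{\Lambda _Pt}\Vert _{L(H,H)}\leq Ce^{-\beta t}$ with $\beta >0$ together with Young's inequality for convolutions $(400)$ with $p=1$, $q=r=2$, I obtain $\Vert y\Vert _{L^2(\mathbb R_+;H)}\leq C_1\Vert B_1\Vert _{L(W,H)}\Vert \widetilde w\Vert _{L^2(\mathbb R_+;W)}$. Since $P\in L(H,H)$ and $B_1\in L(W,H)$ give $B_1^{\ast }P\in L(H,W)$, relation $(226)$ then yields
\[
\Vert w\Vert _{L^2(\mathbb R_+;W)}\leq \Vert \widetilde w\Vert _{L^2(\mathbb R_+;W)}+\gamma ^{-2}\Vert B_1^{\ast }P\Vert _{L(H,W)}\Vert y\Vert _{L^2(\mathbb R_+;H)}\leq C_2\Vert \widetilde w\Vert _{L^2(\mathbb R_+;W)},
\]
with $C_2=1+\gamma ^{-2}C_1\Vert B_1^{\ast }P\Vert _{L(H,W)}\Vert B_1\Vert _{L(W,H)}$ independent of $w$. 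Squaring and setting $\alpha =C_2^{-2}>0$ gives $\Phi (w)=\Vert \widetilde w\Vert _{L^2(\mathbb R_+;W)}^2\geq \alpha \Vert w\Vert _{L^2(0,\infty ;W)}^2$ for every $w\in L^2(\mathbb R_+;W)$, which is $(229)$.

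The substantive point — more an observation than an obstacle — is that feeding the law $(226)$ back into $(225\text{-}2)$ replaces the operator $\Lambda _P^1=A-B_2B_2^{\ast }P$ driven by $w$ (which on its own only delivers $\Vert y\Vert \leq C\Vert w\Vert$, the wrong direction) by the operator $\Lambda _P$ driven by $\widetilde w$; once this exchange of the roles of $w$ and $\widetilde w$ is performed, $(229)$ reduces to the standard a priori bound for an exponentially stable linear evolution equation, already in hand. The only care needed is that the substitution is legitimate at the level of mild solutions, which it is: $B_1^{\ast }Py\in L^2(\mathbb R_+;W)$, and since $\gamma ^{-2}B_1B_1^{\ast }P$ is a bounded operator with $\Lambda _P=\Lambda _P^1+\gamma ^{-2}B_1B_1^{\ast }P$, moving this term from the forcing into the generator preserves the (unique) mild solution, so the variation-of-constants formula for $\Lambda _P^1$ transforms term by term into the one for $\Lambda _P$. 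Finally, as already remarked after $(227)$, $\alpha$ may be taken strictly less than $1$.
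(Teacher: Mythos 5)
Your proof is correct, but it takes a genuinely different route from the paper's. The paper argues by contradiction: it takes a normalized sequence $(w_{n})_{n}$ with $\Phi (w_{n})\rightarrow 0$, extracts a weak limit $\overline{w}$, shows $\Phi (\overline{w})=0$ by weak lower semicontinuity and then $\overline{w}=0$ by Gronwall's lemma, and finally upgrades the weak convergence to strong convergence of $(w_{n})_{n}$ through a compactness argument for the maps $z_{n}$ --- this is where the compactness of $e^{At}$ from hypothesis $(i_{1})$ and the Ascoli--Arzel\`{a} theorem enter --- reaching a contradiction with $\left\Vert w_{n}\right\Vert _{L^{2}(\mathbb{R}_{+};W)}=1$. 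You instead observe that substituting (\ref{226}) into (\ref{225-2}) turns the $\Lambda _{P}^{1}$-equation driven by $w$ into the $\Lambda _{P}$-equation driven by $\widetilde{w}$; this exchange is legitimate because $\gamma ^{-2}B_{1}B_{1}^{\ast }P$ is bounded on $H$, so a bounded perturbation may be moved between the generator and the forcing without changing the (unique) mild solution. The exponential stability of $e^{\Lambda _{P}t}$ on $H$, already established from $(i_{3})$ earlier in the proof of Theorem \ref{Th-main}, then gives $\left\Vert y\right\Vert _{L^{2}(\mathbb{R}_{+};H)}\leq C\left\Vert \widetilde{w}\right\Vert _{L^{2}(\mathbb{R}_{+};W)}$ via Young's inequality, hence $\left\Vert w\right\Vert _{L^{2}(\mathbb{R}_{+};W)}\leq C_{2}\left\Vert \widetilde{w}\right\Vert _{L^{2}(\mathbb{R}_{+};W)}$ and (\ref{229}) with the explicit constant $\alpha =C_{2}^{-2}$. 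Your argument is shorter, avoids the compactness of the semigroup entirely, and produces a quantitative $\alpha $, whereas the contradiction argument only yields the existence of some $\alpha >0$; what it relies on --- and what you correctly identify --- is precisely the exponential stability of $\Lambda _{P}$ on all of $H$ supplied by the preceding part of the proof via Datko's theorem, together with the fact that $y\in L^{2}(\mathbb{R}_{+};H)$ so that $\widetilde{w}\in L^{2}(\mathbb{R}_{+};W)$ to begin with. Given those ingredients, your argument is complete.
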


\begin{proof}
We proceed by reduction to absurdity. Assume that (\ref{229}) does not hold
and argue from this a contradiction. Thus, let $(w_{n})_{n}\subset L^{2}(%
\mathbb{R}_{+};W)$ be such that $\left\Vert w_{n}\right\Vert _{L^{2}(\mathbb{%
R}_{+};W)}=1,$ $\forall n\in \mathbb{N}$ and $\Phi (w_{n})\rightarrow 0$ as $%
n\rightarrow \infty .$ Hence, by (\ref{228}) and eq. (\ref{225-2}) we have 
\begin{equation}
\Phi (w_{n})=\left\Vert w_{n}-\gamma ^{-2}B_{1}^{\ast
}P\int_{0}^{t}e^{(A-B_{2}B_{2}^{\ast }P)t}B_{1}w_{n}(s)ds\right\Vert _{L^{2}(%
\mathbb{R}_{+};W)}\rightarrow 0,\mbox{ as }n\rightarrow \infty .  \label{230}
\end{equation}%
On the other hand, on a subsequence, we have $w_{n}\rightarrow \overline{w}$
weakly in $L^{2}(\mathbb{R}_{+};W),$ and since $\Phi $ is weakly lower
semicontinuous in $L^{2}(\mathbb{R}_{+};W)$ (because it is continuous and
convex) we have by (\ref{230}) that $\Phi (\overline{w})=0$ which implies
that 
\begin{equation*}
\overline{w}(t)=\gamma ^{-2}B_{1}^{\ast }P\int_{0}^{t}e^{(A-B_{2}B_{2}^{\ast
}P)t}B_{1}\overline{w}(s)ds,\mbox{ }\forall t\geq 0.
\end{equation*}%
By Gronwall's lemma we deduce that $\overline{w}(t)=0.$ Now, if we prove
that 
\begin{equation*}
w_{n}\rightarrow \overline{w}\mbox{ strongly in }L^{2}(\mathbb{R}_{+};W)%
\mbox{ as }n\rightarrow \infty
\end{equation*}%
(namely, that $(w_{n})_{n}$ is compact in $L^{2}(\mathbb{R}_{+};W))$ we
arrive to a contradiction because, the choice $\left\Vert w_{n}\right\Vert
_{L^{2}(\mathbb{R}_{+};W)}=1$ implies $\left\Vert \overline{w}\right\Vert
_{L^{2}(\mathbb{R}_{+};W)}=1,$ which was found before to be $0.$

To prove that $(w_{n})_{n}$ is compact in $L^{2}(\mathbb{R}_{+};W),$ by (\ref%
{230}) it suffices to show that the sequence%
\begin{equation*}
z_{n}(t)=\gamma ^{-2}B_{1}^{\ast }P\int_{0}^{t}e^{(A-B_{2}B_{2}^{\ast
}P)t}B_{1}w_{n}(s)ds,\mbox{ }t\geq 0
\end{equation*}%
is compact in $L^{2}(\mathbb{R}_{+};W),$ that is, it contains a convergent
subsequence. Taking into account that 
\begin{equation}
\left\Vert z_{n}\right\Vert _{L^{2}(T,\infty ;W)}\rightarrow 0\mbox{ as }%
T\rightarrow \infty ,\mbox{ uniformly in }n,  \label{230-0}
\end{equation}%
since $A-B_{2}B_{2}^{\ast }P$ generates an exponentially stable semigroup,
it suffices to prove that $(z_{n})_{n}$ is compact in $L^{2}(0,T;W),$ for
each $T>0.$ We set 
\begin{equation}
S(t)=e^{(A-B_{2}B_{2}^{\ast }P)t},\mbox{ }t\geq 0  \label{230-1}
\end{equation}%
and prove that $\{S(t)\}$ is compact for each $t>0.$ This means that the set~%
$\{S(t)y_{0};$ $y_{0}\in H;$ $\left\Vert y_{0}\right\Vert _{H}\leq M\}$ is
relatively compact in $H.$ Since $A-B_{2}B_{2}^{\ast }P=\Lambda _{P}-\gamma
^{-2}B_{1}B_{1}^{\ast }P$ and $B_{1}B_{1}^{\ast }P\in L(H,H)$ and $\Lambda
_{P}=A_{P}$ on $D(A_{P})$ it suffices to show that $T_{P}(t)=e^{A_{P}t}$ is
compact for each $t>0.$ This follows by density by showing first that $%
\{T_{P}(t)y_{0};$ $y_{0}\in D(A_{P}),$ $\left\Vert A_{P}y_{0}\right\Vert
_{H}+\left\Vert y_{0}\right\Vert _{H}\leq M\}$ is relatively compact in $H.$
To this end, for $\varepsilon >0,$ we write $T_{P}(t)y_{0}$ in the following
form 
\begin{eqnarray}
&&T_{P}(t)y_{0}=e^{At}y_{0}-\int_{0}^{t}e^{A(t-s)}(B_{2}B_{2}^{\ast
}Py(s)-\gamma ^{-2}B_{1}B_{1}^{\ast }Py(s))ds  \label{231} \\
&=&e^{At}y_{0}-e^{A\varepsilon }\int_{0}^{t-\varepsilon
}e^{A(t-s-\varepsilon )}(B_{2}B_{2}^{\ast }Py(s)-\gamma
^{-2}B_{1}B_{1}^{\ast }Py(s))ds  \notag \\
&&-\int_{t-\varepsilon }^{t}e^{A(t-s)}(B_{2}B_{2}^{\ast }Py(s)-\gamma
^{-2}B_{1}B_{1}^{\ast }Py(s))ds,  \notag
\end{eqnarray}%
where $y(t)=T_{P}(t)y_{0}.$ If $\mathcal{M}=\{y_{0}\in D(A_{P});$ $%
\left\Vert A_{P}y_{0}\right\Vert _{H}+\left\Vert y_{0}\right\Vert _{H}\leq
M\},$ relation (\ref{231}) yields 
\begin{eqnarray*}
&&T_{P}(t)\mathcal{M=}\left\{ e^{At}y_{0};\mbox{ }y_{0}\in \mathcal{M}%
\right\}  \\
&&-\left\{ e^{A\varepsilon }\int_{0}^{t-\varepsilon }e^{A(t-s-\varepsilon
)}(B_{2}B_{2}^{\ast }Py(s)-\gamma ^{-2}B_{1}B_{1}^{\ast }Py(s))ds;\mbox{ }%
y_{0}\in \mathcal{M}\right\}  \\
&&-\left\{ \int_{t-\varepsilon }^{t}e^{A(t-s)}(B_{2}B_{2}^{\ast
}Py(s)-\gamma ^{-2}B_{1}B_{1}^{\ast }Py(s))ds;\mbox{ }y_{0}\in \mathcal{M}%
\right\} =\mathcal{M}_{1}+\mathcal{M}_{2}+\mathcal{M}_{3}.
\end{eqnarray*}%
In the sum above, $\mathcal{M}_{1}$ is relatively compact because $e^{At}$
is compact by $(i_{1})$. Next, we write $\mathcal{M}_{2}=\mathcal{M}_{21}+%
\mathcal{M}_{22}$ where $\mathcal{M}_{2i}=\left\{ e^{A\varepsilon
}\int_{0}^{t-\varepsilon }e^{A(t-s-\varepsilon )}B_{i}B_{i}^{\ast }Py(s)ds;%
\mbox{ }y_{0}\in \mathcal{M}\right\} ,$ $i=1,2.$ $\mathcal{M}_{21}$ is
relatively compact because $e^{A\varepsilon }$ is compact and $%
\int_{0}^{t-\varepsilon }e^{A(t-s-\varepsilon )}B_{1}B_{1}^{\ast }Py(s)ds$
is bounded, 
\begin{eqnarray*}
\left\Vert \int_{0}^{t-\varepsilon }e^{A(t-s-\varepsilon )}B_{1}B_{1}^{\ast
}Py(s)ds\right\Vert _{H} &\leq &C\int_{0}^{t}\left\Vert B_{1}B_{1}^{\ast
}Py(s)\right\Vert _{U}ds \\
&\leq &C\int_{0}^{t}\left\Vert B_{1}^{\ast }Py(s)\right\Vert _{H}ds\leq
Ct\left\Vert y_{0}\right\Vert _{H}.
\end{eqnarray*}%
Then, 
\begin{eqnarray*}
&&\left\Vert \int_{0}^{t-\varepsilon }e^{A(t-s-\varepsilon
)}B_{2}B_{2}^{\ast }Py(s)ds\right\Vert _{H}=\sup_{\varphi \in H,\left\Vert
\varphi \right\Vert _{H}\leq 1}\left( \int_{0}^{t-\varepsilon
}e^{A(t-s-\varepsilon )}B_{2}B_{2}^{\ast }Py(s)ds,\varphi \right) _{H} \\
&\leq &\sup_{\left\Vert \varphi \right\Vert _{H}\leq
1}\int_{0}^{t-\varepsilon }\left( B_{2}^{\ast }Py(s),B_{2}^{\ast }e^{A^{\ast
}(t-s-\varepsilon )}\varphi \right) _{U}ds\leq \sup_{\left\Vert \varphi
\right\Vert _{H}\leq 1}\int_{0}^{t-\varepsilon }\left\Vert B_{2}^{\ast
}Py(s)\right\Vert _{U}\left\Vert B_{2}^{\ast }e^{A^{\ast }(t-s-\varepsilon
)}\varphi \right\Vert _{U}ds \\
&\leq &\sup_{\left\Vert \varphi \right\Vert _{H}\leq
1}\int_{0}^{t-\varepsilon }\left\Vert Py(s)\right\Vert _{D(A^{\ast
})}\left\Vert B_{2}^{\ast }e^{A^{\ast }(t-s-\varepsilon )}\right\Vert
_{L(H,U)}\left\Vert \varphi \right\Vert _{H}ds \\
&\leq &\int_{0}^{t-\varepsilon }\left\Vert y(s)\right\Vert
_{D(A_{P})}\left\Vert B_{2}^{\ast }e^{A^{\ast }(t-s-\varepsilon
)}\right\Vert _{L(H,U)}ds \\
&\leq &\int_{0}^{t-\varepsilon }\left\Vert A_{P}y(s)\right\Vert
_{H}\left\Vert B_{2}^{\ast }e^{A^{\ast }(t-s-\varepsilon )}\right\Vert
_{L(H,U)}ds\leq C\left\Vert A_{P}y_{0}\right\Vert
_{H}\int_{0}^{t-\varepsilon }\left\Vert B_{2}^{\ast }e^{A^{\ast
}(t-s-\varepsilon )}\right\Vert _{L(H,U)}ds\leq C_{T},
\end{eqnarray*}%
hence $\mathcal{M}_{22}$ is relatively compact, too. We also have 
\begin{equation*}
\left\Vert \int_{t-\varepsilon }^{t}e^{A(t-s)}B_{1}B_{1}^{\ast
}Py(s)ds\right\Vert _{H}\leq C\int_{t-\varepsilon }^{t}\left\Vert
B_{2}B_{2}^{\ast }Py(s)\right\Vert _{H}ds\leq C\varepsilon 
\end{equation*}%
and similarly we estimate that the term corresponding to $B_{2}B_{2}^{\ast }P
$ is bounded by $C\varepsilon .$ Since $\varepsilon $ is arbitrary it
follows that $T_{P}(t)\mathcal{M}$ is compact and, as mentioned earlier, it
follows by density that the set $\left\{ T_{P}(t)y_{0};\mbox{ }\left\Vert
y_{0}\right\Vert _{H}\leq M\right\} $ is compact for each $M$ and $t>0,$
fixed. Now, coming back to $z_{n}$ we write 
\begin{equation*}
z_{n}(t)=\gamma ^{-2}B_{1}^{\ast }PS(\varepsilon )\left(
\int_{0}^{t-\varepsilon }S(t-s-\varepsilon )B_{1}w_{n}(s)ds\right) +\gamma
^{-2}B_{1}^{\ast }P\int_{t-\varepsilon }^{t}S(t-s)B_{1}w_{n}(s)ds
\end{equation*}%
and get 
\begin{eqnarray*}
&&\left\Vert \int_{0}^{t-\varepsilon }S(t-s-\varepsilon
)B_{1}w_{n}(s)ds\right\Vert _{H}\leq C\left\Vert \int_{0}^{t-\varepsilon
}e^{-\beta (t-s-\varepsilon )}B_{1}w(s)ds\right\Vert _{H} \\
&\leq &C\int_{0}^{t-\varepsilon }\left\Vert B_{1}w_{n}(s)\right\Vert
_{H}ds\leq C\left\Vert w\right\Vert _{L^{2}(\mathbb{R}_{+};W)}\leq C,\mbox{ }%
\forall t\geq 0,
\end{eqnarray*}%
hence, $\left\{ S(\varepsilon )\left( \int_{0}^{t-\varepsilon
}S(t-s-\varepsilon )B_{1}w_{n}(s)ds\right) \right\} $ is compact in $H.$

Taking into account that $\left\Vert \int_{t-\varepsilon
}^{t}S(t-s)B_{1}w(s)ds\right\Vert _{H}\leq C\varepsilon ,$ it follows that $%
(z_{n}(t))_{n}$ is compact in $H,$ for every $t>0.$ Also, it is
equi-uniformly continuous, that is $\left\Vert
z_{n}(t+h)-z_{n}(t)\right\Vert _{H}\leq \varepsilon $ if $\left\vert
h\right\vert \leq \delta (\varepsilon ),$ for any $t.$ The latter follows
because the semigroup $S(t)$ is continuous for $t>0$ in the uniform operator
topology (see \cite{Pazy}, p. 48, Theorem 3.2), and this means that $%
\left\Vert (S(t+h)-S(t))\theta \right\Vert _{H}\leq \delta _{1}(h)\left\Vert
\theta \right\Vert _{H},$ where $\delta _{1}(h)\rightarrow 0,$ and $\theta
\in H.$ Then, 
\begin{eqnarray*}
&&\left\Vert z_{n}(t+h)-z_{n}(t)\right\Vert _{H}\leq
C_{1}\int_{t}^{t+h}\left\Vert S(t+h-s)B_{1}Pw_{n}(s)\right\Vert _{H}ds \\
&&+C_{2}\int_{0}^{t}\left\Vert (S(t+h-s)-S(t-s))B_{1}Pw_{n}(s)\right\Vert
_{H}ds \\
&\leq &C_{1}\int_{t}^{t+h}e^{-\beta (t+h-s)}\left\Vert w_{n}(s)\right\Vert
_{H}ds+C_{2}\delta _{2}(h),
\end{eqnarray*}%
where $\delta _{2}(h)\rightarrow 0$ as $h\rightarrow 0.$ Then, by
Ascoli-Arzel\`{a}'s theorem, $(z_{n})_{n}$ is compact in $C([0,T];H),$ for
every $T>0$ and so $z_{n}\rightarrow z$ strongly in $L^{2}(0,T;H),$ for
every $T>0.$ Recalling (\ref{230-0}) we note that 
\begin{equation*}
\left\Vert z_{n}-z\right\Vert _{L^{2}(\mathbb{R}_{+};H)}^{2}=\int_{0}^{T}%
\left\Vert z_{n}(t)-z(t)\right\Vert _{H}^{2}dt+\int_{T}^{\infty }\left\Vert
z_{n}(t)-z(t)\right\Vert _{H}^{2}dt\rightarrow 0,\mbox{ as }n\rightarrow
\infty 
\end{equation*}%
because the first term tends to $0$ by the compactness argument developed
before and 
\begin{equation*}
\int_{T}^{\infty }\left\Vert z_{n}(t)-z(t)\right\Vert _{H}^{2}dt\leq
2\int_{T}^{\infty }\left\Vert z_{n}(t)\right\Vert
_{H}^{2}dt+\int_{T}^{\infty }\left\Vert z(t)\right\Vert
_{H}^{2}dt\rightarrow 0
\end{equation*}%
by (\ref{230-0}) and the fact that $A-B_{2}B_{2}^{\ast }P$ generates an
exponentially stable semigroup.

Going back to (\ref{230}), it follows that $\left\Vert
(w_{n}-z_{n})(t)\right\Vert _{W}\rightarrow 0,$ a.e. $t>0,$ and so $%
(w_{n})_{n}$ is compact, as claimed. This ends the proof of Lemma 3.5 and
also of Theorem 3.1.
\end{proof}

\begin{remark}
Theorem \ref{Th-main} reduces the existence of a robust feedback controller $%
F$ satisfying (\ref{14-1}) to the existence of a solution $P$ to (\ref{15})
in the same way as for $B_{1}=B_{2}=D_{1}=0,$ $C_{1}=I,$ the Lyapunov
equation $A^{\ast }P+PA=I$ is related to the stability of the semigroup $%
e^{At}.$ In the specific examples discussed in the next sections we shall
show that the operatorial equation (\ref{15}) reduces to a nonlinear
integro-differential elliptic equation.
\end{remark}

\section{The case of a $N$-$D$ distributed control\label{Distributed}}

\setcounter{equation}{0}

Let $\Omega $ be an open bounded subset of $\mathbb{R}^{N},$ $N>3$ with the
boundary $\Gamma =\partial \Omega $ sufficiently smooth and assume that $%
0\in \Omega .$ We consider the following singular system%
\begin{eqnarray}
&&\left. y_{t}-\Delta y-\frac{\lambda y}{|x|^{2}}-a(x)y=B_{1}w+B_{2}u,%
\right. \left. \mbox{in }(0,\infty )\times \Omega ,\right.  \label{21} \\
&&\left. y=0,\right. \left. \mbox{ \ \ \ \ \ \ \ \ \ \ \ \ \ \ \ \ \ \ \ \ \
\ \ \ \ \ \ \ \ \ \ \ \ \ \ \ \ \ \ \ \ on }(0,\infty )\times \Gamma ,\right.
\label{21-1} \\
&&\left. y(0)=y_{0},\right. \left. \mbox{ \ \ \ \ \ \ \ \ \ \ \ \ \ \ \ \ \
\ \ \ \ \ \ \ \ \ \ \ \ \ \ \ \ \ \ \ in }\Omega ,\right.  \label{21-2} \\
&&\left. z=C_{1}y+D_{1}u,\right. \left. \mbox{ \ \ \ \ \ \ \ \ \ \ \ \ \ \ \
\ \ \ \ \ \ \ \ \ \ \ \ \ \ in }(0,\infty )\times \Omega ,\right.
\label{21-3}
\end{eqnarray}%
where $\lambda >0,$ $\left\vert \cdot \right\vert $ denotes the Euclidian
norm in $\mathbb{R}^{N},$ for any $N=1,2,...$, according the case and $a$
has the expression 
\begin{equation}
a(x)=a_{0}\chi _{\Omega _{0}}(x),\mbox{ }a_{0}>0,\mbox{ }\Omega _{0}\subset
\Omega .  \label{21-4}
\end{equation}%
In this problem 
\begin{equation}
y_{0}\in L^{2}(\Omega )  \label{21-5}
\end{equation}%
and we choose%
\begin{equation}
H=W=Z=L^{2}(\Omega ),\mbox{ }U=\mathbb{R},  \label{21-6}
\end{equation}%
\begin{eqnarray}
B_{1}w &=&\chi _{\omega _{1}}(x)w,\mbox{ \ }B_{2}u=b(x)u,\mbox{ \ }
\label{22} \\
C_{1}y &=&\chi _{\Omega _{C}}(x)y,\mbox{ \ }D_{1}u=d(x)u,\mbox{ }x\in \Omega
,\mbox{ }u\in \mathbb{R},  \notag
\end{eqnarray}%
where $\Omega _{0},$ $\Omega _{C},$ $\omega _{1}$ are open sets of $\Omega ,$
$\chi _{\omega }$ is characteristic functions of the set $\omega \subset
\Omega ,$ 
\begin{equation}
\omega _{1}\sqsubseteq \Omega ,\mbox{ }\Omega _{0}\sqsubseteq \Omega
_{C}\subset \Omega ,\mbox{ }  \label{22-5}
\end{equation}%
and 
\begin{equation}
b\in L^{2}(\Omega ),\mbox{ }d\in L^{2}(\Omega ),\mbox{ }d(x)=\chi _{\Omega
\backslash \Omega _{C}.}.  \label{22-1}
\end{equation}%
We begin by checking the hypotheses $(i_{1})-(i_{4}).$

\medskip

$(i_{1})$ By their expressions we see that 
\begin{equation*}
B_{1},C_{1}\in L(L^{2}(\Omega ),L^{2}(\Omega )),\mbox{ }B_{2},D_{1}\in L(%
\mathbb{R},L^{2}(\Omega ))
\end{equation*}%
and $B_{2}^{\ast }:L^{2}(\Omega )\rightarrow \mathbb{R}$ is defined by 
\begin{equation}
B_{2}^{\ast }v=\int_{\Omega }b(x)v(x)dx,\mbox{ for }v\in L^{2}(\Omega ).
\label{22-3}
\end{equation}%
We recall the Hardy inequality (\ref{HN}) and consider $\lambda <H_{N}.$ We
introduce the self-adjoint operator 
\begin{equation}
A:D(A)\subset L^{2}(\Omega )\rightarrow L^{2}(\Omega ),\mbox{ }Ay=\Delta y+%
\frac{\lambda y}{|x|^{2}}+ay,  \label{23}
\end{equation}%
with%
\begin{equation}
D(A)=\{y\in H_{0}^{1}(\Omega );\mbox{ }Ay\in L^{2}(\Omega )\}.  \label{24}
\end{equation}%
It is clear that $\overline{D(A)}=L^{2}(\Omega )$ because $D(A)$ contains $%
C_{0}^{\infty }(\Omega \backslash \{0\}).$ Then, equation (\ref{21}) can be
equivalently written%
\begin{equation}
y^{\prime }(t)=Ay(t)+B_{1}w(t)+B_{2}u(t),\mbox{ }t\geq 0.  \label{24-1}
\end{equation}%
In order to show that $A$ generates a $C_{0}$-semigroup on $L^{2}(\Omega ),$
we have to prove that $A$ is $\omega $-$m$-dissipative on $L^{2}(\Omega ),$
or that $-A$ is $\omega $-$m$-accretive on $L^{2}(\Omega )$ (see \cite%
{VB-book-2010}, p. 155).

\begin{lemma}
Let $\lambda <H_{N}.$ The operator $-A$ is $\omega $-m-accretive on $%
L^{2}(\Omega ),$ for $\omega >a_{0}.$
\end{lemma}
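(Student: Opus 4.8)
The plan is to show directly that $-A$ is $\omega$-$m$-accretive for $\omega > a_0$, i.e.\ that $\omega I - A$ is accretive and $R(\omega I - A + I)$ — more precisely $R((\omega+1)I - A)$, or for any $\mu > \omega$, $R(\mu I - A)$ — equals $L^2(\Omega)$. I would first establish accretivity of $\omega I - A$. For $y \in D(A)$, I compute
\[
((\omega I - A)y, y)_{L^2(\Omega)} = \omega \|y\|_2^2 - \int_\Omega \left(\Delta y + \frac{\lambda y}{|x|^2} + a(x)y\right) y\,dx = \omega\|y\|_2^2 + \int_\Omega |\nabla y|^2\,dx - \lambda \int_\Omega \frac{|y|^2}{|x|^2}\,dx - \int_\Omega a(x)|y|^2\,dx,
\]
after integrating by parts (legitimate since $y \in H_0^1(\Omega)$ and $Ay \in L^2$). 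By the Hardy inequality (\ref{HN}), $\int_\Omega |\nabla y|^2 - \lambda \int_\Omega \frac{|y|^2}{|x|^2} \geq \left(1 - \frac{\lambda}{H_N}\right)\int_\Omega |\nabla y|^2 \geq 0$ since $\lambda < H_N$. Since $0 \le a(x) \le a_0$, we get $((\omega I - A)y, y)_{L^2} \geq (\omega - a_0)\|y\|_2^2 \geq 0$, so $\omega I - A$ is accretive (in fact strictly accretive for $\omega > a_0$).

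The second and main part is the range condition: given $f \in L^2(\Omega)$ and $\mu > a_0$, solve $\mu y - Ay = f$, i.e.\ $\mu y - \Delta y - \frac{\lambda y}{|x|^2} - a(x)y = f$ with $y \in H_0^1(\Omega)$. I would set this up variationally on $H_0^1(\Omega)$ via the bilinear form
\[
\mathfrak{a}(y,v) = \mu \int_\Omega yv\,dx + \int_\Omega \nabla y \cdot \nabla v\,dx - \lambda \int_\Omega \frac{yv}{|x|^2}\,dx - \int_\Omega a(x)yv\,dx.
\]
Continuity of $\mathfrak{a}$ on $H_0^1$ follows from Cauchy–Schwarz together with the Hardy inequality applied to the singular term (which bounds $\int \frac{|yv|}{|x|^2}$ by $H_N^{-1}\|\nabla y\|_2 \|\nabla v\|_2$), and boundedness of $a$. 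Coercivity is the key point: $\mathfrak{a}(y,y) \geq (\mu - a_0)\|y\|_2^2 + \left(1 - \frac{\lambda}{H_N}\right)\|\nabla y\|_2^2$, and since $\lambda < H_N$ the gradient term has a strictly positive coefficient, so $\mathfrak{a}(y,y) \geq c\|y\|_{H_0^1}^2$ with $c = \min\{\mu - a_0, 1 - \lambda/H_N\} > 0$. Lax–Milgram then yields a unique $y \in H_0^1(\Omega)$ with $\mathfrak{a}(y,v) = (f,v)_{L^2}$ for all $v \in H_0^1(\Omega)$; taking $v \in C_0^\infty(\Omega \setminus\{0\})$ shows $\mu y - \Delta y - \frac{\lambda y}{|x|^2} - ay = f$ in $\mathcal{D}'$, whence $Ay = \mu y - f \in L^2(\Omega)$ and $y \in D(A)$. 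Thus $R(\mu I - A) = L^2(\Omega)$ for all $\mu > a_0$; choosing $\mu = \omega + 1$ with $\omega > a_0$ gives $R((\omega+1)I - A) = R(I + (\omega I - A)) = L^2(\Omega)$, completing the proof that $-A$ is $\omega$-$m$-accretive.

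The main obstacle — though a mild one — is handling the singular Hardy potential term $\frac{\lambda y}{|x|^2}$, which is not a bounded perturbation of $-\Delta$: both the continuity and the coercivity estimates for $\mathfrak{a}$ must route through the Hardy inequality (\ref{HN}) rather than through $L^\infty$ bounds, and it is precisely the strict inequality $\lambda < H_N$ that keeps the coercivity constant positive. One should also note that the form domain $H_0^1(\Omega)$ is the correct one since, by the Hardy inequality, the quadratic form $\int|\nabla y|^2 - \lambda\int\frac{|y|^2}{|x|^2}$ is comparable to $\|\nabla y\|_2^2$ on $H_0^1(\Omega)$; no enlargement of the domain is needed because $\lambda$ is strictly subcritical. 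Everything else (the integration by parts, the boundedness of $a$, the identification of the weak solution with an element of $D(A)$) is routine. Since $-A$ is $\omega$-$m$-accretive, $A - \omega I$ is $m$-dissipative and $A$ generates an analytic $C_0$-semigroup on $L^2(\Omega)$ (as $A$ is self-adjoint and bounded above), which is the form in which this will be used in verifying $(i_1)$.
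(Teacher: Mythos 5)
Your proof is correct. The accretivity computation is identical to the paper's: integrate by parts, absorb the singular term via the Hardy inequality (\ref{HN}) using $\lambda<H_N$, and absorb $a$ using $\omega>a_0$. For the range condition the paper and you both pass to a weak (variational) formulation on $H_0^1(\Omega)$ with coercivity supplied by Hardy, but the existence mechanisms differ: the paper minimizes the energy functional $J$ directly, taking a minimizing sequence, extracting weak limits of $y_n$ and $y_n/|x|$, and invoking Vitali's theorem to identify the limit of the singular term before deriving the Euler--Lagrange equation; you instead verify boundedness and coercivity of the associated bilinear form $\mathfrak{a}$ and apply Lax--Milgram. For this symmetric, bounded, coercive form the two routes are equivalent, and yours is somewhat cleaner in that it sidesteps the identification of the weak limit of $y_n/|x|$ entirely (the continuity estimate $\int_\Omega \frac{|yv|}{|x|^2}\,dx\le H_N^{-1}\|\nabla y\|_2\|\nabla v\|_2$ does that work up front), while the paper's direct method displays the underlying convex minimization structure that it reuses in later sections (e.g.\ Lemma 5.1). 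One cosmetic remark: there is no need to restrict test functions to $C_0^\infty(\Omega\setminus\{0\})$ when recovering the strong equation; since $y/|x|\in L^2(\Omega)$ and $1/|x|\in L^2_{\mathrm{loc}}$ for $N>3$, the weak identity $\mathfrak{a}(y,v)=(f,v)_{L^2}$ already holds for all $v\in C_0^\infty(\Omega)$ and yields the equation in $\mathcal{D}'(\Omega)$, whence $Ay=\mu y-f\in L^2(\Omega)$ and $y\in D(A)$.
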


\begin{proof}
This means to show that $-A$ is $\omega $-accretive, that is $((\omega
I-A)y,y)_{2}\geq 0$ for some $\omega >0$ and all $y\in L^{2}(\Omega )$ and
that $\omega I-A$ is surjective. To this end we shall use several times the
Hardy inequality (\ref{HN}) which ensures that $\frac{y}{x}\in L^{2}(\Omega
) $ if $y\in H_{0}^{1}(\Omega ).$ We have 
\begin{eqnarray*}
((\omega I-A)y,y)_{2} &=&\omega \int_{\Omega }\left\vert y\right\vert
^{2}dx+\int_{\Omega }\left\vert \nabla y\right\vert ^{2}dx-\lambda
\int_{\Omega }\frac{\left\vert y\right\vert ^{2}}{\left\vert x\right\vert
^{2}}dx-a_{0}\int_{\Omega _{0}}\left\vert y\right\vert ^{2}dx \\
&\geq &\left( 1-\frac{\lambda }{H_{N}}\right) \int_{\Omega }\left\vert
\nabla y\right\vert ^{2}dx+(\omega -a_{0})\int_{\Omega }\left\vert
y\right\vert ^{2}dx \\
&\geq &\frac{1}{2}\left( 1-\frac{\lambda }{H_{N}}\right) \left\Vert \nabla
y\right\Vert _{2}^{2}+\frac{H_{N}}{2}\left( 1-\frac{\lambda }{H_{N}}\right)
\left\Vert \frac{y}{x}\right\Vert _{2}^{2}+(\omega -a_{0})\left\Vert
y\right\Vert _{2}^{2}
\end{eqnarray*}%
which shows that $-A$ is $\omega $-accretive on $L^{2}(\Omega )$ for $%
\lambda <H_{N}$ and $\omega >a_{0}.$

To prove the surjectivity of $\omega I-A,$ we show that the range $R(\omega
I-A)=L^{2}(\Omega ).$ Thus, let $f\in L^{2}(\Omega )$ and prove that the
equation 
\begin{equation}
\omega y-Ay=f  \label{24-3}
\end{equation}%
has a solution $y\in D(A)$, by the equivalent variational formulation
expressed by the minimization problem 
\begin{equation}
\min_{y\in H_{0}^{1}(\Omega )}\left\{ J(y)=\int_{\Omega }\left( \frac{1}{2}%
\left\vert \nabla y\right\vert ^{2}-\frac{\lambda }{2}\frac{y^{2}}{%
\left\vert x\right\vert ^{2}}-\frac{\omega -a(x)}{2}y^{2}-fy\right)
dx\right\} ,  \label{24-0}
\end{equation}%
subject to (\ref{24-1}) and $y(0)=y_{0}\in L^{2}(\Omega ).$ For $\omega
>a_{0}$ we have 
\begin{equation*}
\frac{1}{2}\left( 1-\frac{\lambda }{H_{N}}\right) \int_{\Omega }\left\vert
\nabla y\right\vert ^{2}dx+(\omega -a_{0})\int_{\Omega }y^{2}dx-\frac{1}{%
2(\omega -a_{0})}\int_{\Omega }\left\vert f\right\vert ^{2}dx\leq J(\varphi
)<\infty
\end{equation*}%
so that $J$ has an infimum $d.$ Taking a minimizing sequence $(y_{n})_{n}$
we have 
\begin{equation}
d\leq J(y_{n})\leq d+\frac{1}{n}  \label{24-2}
\end{equation}%
and so 
\begin{equation*}
\left\Vert \nabla y_{n}\right\Vert _{2}+\left\Vert y_{n}\right\Vert
_{2}+\left\Vert \frac{y_{n}}{x}\right\Vert _{2}\leq C_{N}\mbox{ for }\omega
>a_{0}.
\end{equation*}%
Further, $C,$ $C_{N},$ $C_{T}$ denote some constants (which may change from
line to line), $C_{N}$ depending on $N,$ via $\lambda <H_{N}$ and $C_{T}$
depending on $T.$

We deduce that on a subsequence denoted still by $n$ it follows that 
\begin{equation*}
y_{n}\rightarrow y\mbox{ weakly in }H_{0}^{1}(\Omega ),\mbox{ }\frac{y_{n}}{x%
}\rightarrow l\mbox{ weakly in }L^{2}(\Omega )
\end{equation*}%
and by compactness $y_{n}\rightarrow y$ strongly in $L^{2}(\Omega ).$ Then $%
\frac{y_{n}}{x}\rightarrow \frac{y}{x}$ a.e. on $\Omega $ and $l=\frac{y}{x}$
by the Vitali's theorem. We can now pass to the limit in (\ref{24-2}),
relying on the weakly lower semicontinuity of $J$ and get that $J(y)=d,$
that is $y$ realizes the minimum in (\ref{24-0}).

Next, we give a variation $y^{\sigma }=y+\sigma \eta ,$ for $\sigma >0$ and $%
\eta \in H_{0}^{1}(\Omega ),$ and particularize the condition of optimality,
namely $J(\widetilde{y})\geq J(y)$ for any $\widetilde{y}\in
H_{0}^{1}(\Omega )$ for $\widetilde{y}=y^{\sigma }.$ We calculate 
\begin{equation*}
\lim_{\sigma \rightarrow 0}\frac{J(y^{\sigma })-J(y)}{\sigma }=\int_{\Omega
}\left( (\omega -a(x))y\eta +\nabla y\cdot \nabla \eta -\frac{\lambda y\eta 
}{\left\vert x\right\vert ^{2}}-f\eta \right) dx\geq 0.
\end{equation*}%
Repeating the calculus for $\sigma \rightarrow -\sigma $ we get the reverse
inequality, so that finally we can write%
\begin{equation*}
\int_{\Omega }\left\langle (\omega -a(x))y-\Delta y-\frac{\lambda y}{%
\left\vert x\right\vert ^{2}}-f,\eta \right\rangle _{H^{-1}(\Omega
),H_{0}^{1}(\Omega )}dx=0\mbox{ for all }\eta \in H_{0}^{1}(\Omega ),
\end{equation*}%
which implies that $y$ is the weak solution to the equation (\ref{24-3}).
The solution is also unique because $J$ is strictly convex and the system is
linear. By (\ref{24-3}) we see that $Ay\in L^{2}(\Omega ),$ so that $y\in
D(A).$
\end{proof}

In conclusion, $A$ generates an analytic $C_{0}$-semigroup on $L^{2}(\Omega
) $ for $\lambda <H_{N}.$

Moreover, since as earlier seen, the operator $(\omega I-A)^{-1}$ is a
compact operator for $\omega >a_{0},$ it follows that $e^{At}$ is compact
for all $t>0.$

\medskip

$(i_{2})$ Let $y_{0}\in L^{2}(\Omega ),$ $u\in L^{2}(\mathbb{R}_{+},\mathbb{R%
}),$ $w\in L^{2}(\mathbb{R}_{+};L^{2}(\Omega )).$ Since $B_{1}w+B_{2}u\in
L^{2}(0,T;L^{2}(\Omega ))$ and $y_{0}\in \overline{D(A)}=L^{2}(\Omega ),$
eq. (\ref{24-1}) with $y(0)=y_{0}$ has a unique mild solution $y\in
C([0,T],L^{2}(\Omega )),$ given by (\ref{13}) for any $T>0$ (see \cite%
{VB-book-2010}, p. 131, Corollary 4.1). The solution also satisfies $y\in
L^{2}(0,T;H_{0}^{1}(\Omega ))\cup W^{1,2}(0,T;H^{-1}(\Omega )).$

\medskip

In order to prove $(i_{3})$ we provide the following lemma.

\begin{lemma}
Let $\lambda <H_{N}.$ Then, the pair $(A,C_{1})$ is exponentially detectable.
\end{lemma}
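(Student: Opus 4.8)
The plan is to exhibit an explicit output injection operator and verify exponential stability of the resulting semigroup by a dissipativity estimate. Take $Z=H=L^{2}(\Omega )$ and choose $K=-\kappa I\in L(Z,H)$ with a constant $\kappa \ge a_{0}$ to be fixed, so that $KC_{1}y=-\kappa \,\chi _{\Omega _{C}}(x)y$. Since $A$ generates an analytic $C_{0}$-semigroup on $L^{2}(\Omega )$ for $\lambda <H_{N}$ (established right after the preceding lemma) and $KC_{1}\in L(L^{2}(\Omega ),L^{2}(\Omega ))$ is bounded, the perturbed operator $A+KC_{1}$, with domain $D(A)$, again generates an analytic $C_{0}$-semigroup on $L^{2}(\Omega )$.

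The next step is a dissipativity inequality. Carrying out the computation in the proof of the preceding lemma, but keeping the zero-order term in its localized form $-a_{0}\int_{\Omega _{0}}|y|^{2}dx$ rather than estimating it by $-a_{0}\int_{\Omega }|y|^{2}dx$, two applications of the Hardy inequality (\ref{HN}) give, for $y\in D(A)$,
\[
(Ay,y)_{2}\le -\tfrac12\Bigl(1-\tfrac{\lambda }{H_{N}}\Bigr)\|\nabla y\|_{2}^{2}-\tfrac{H_{N}}{2}\Bigl(1-\tfrac{\lambda }{H_{N}}\Bigr)\Bigl\|\tfrac{y}{x}\Bigr\|_{2}^{2}+a_{0}\int_{\Omega _{0}}|y(x)|^{2}\,dx .
\]
Adding $(KC_{1}y,y)_{2}=-\kappa \int_{\Omega _{C}}|y(x)|^{2}\,dx$ and using the inclusion $\Omega _{0}\sqsubseteq \Omega _{C}$ from (\ref{22-5}), the zero-order contribution obeys $a_{0}\int_{\Omega _{0}}|y|^{2}dx-\kappa \int_{\Omega _{C}}|y|^{2}dx\le (a_{0}-\kappa )\int_{\Omega _{0}}|y|^{2}dx\le 0$ whenever $\kappa \ge a_{0}$. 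Discarding as well the (nonpositive) Hardy term and invoking Poincaré's inequality $\|\nabla y\|_{2}^{2}\ge c_{0}\|y\|_{2}^{2}$ on $H_{0}^{1}(\Omega )$, with $c_{0}>0$ depending only on $\Omega $, we obtain
\[
((A+KC_{1})y,y)_{2}\le -\tfrac12\Bigl(1-\tfrac{\lambda }{H_{N}}\Bigr)\|\nabla y\|_{2}^{2}\le -\alpha \|y\|_{2}^{2},\qquad y\in D(A),
\]
with $\alpha :=\tfrac{c_{0}}{2}(1-\lambda /H_{N})>0$; positivity of $\alpha $ is exactly where $\lambda <H_{N}$ is used.

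Finally I would turn this into the decay bound. For $y_{0}\in D(A)$ set $y(t)=e^{(A+KC_{1})t}y_{0}$; then $y\in C^{1}([0,\infty );H)$ with $y^{\prime }(t)=(A+KC_{1})y(t)$, so $\tfrac{d}{dt}\|y(t)\|_{2}^{2}=2((A+KC_{1})y(t),y(t))_{2}\le -2\alpha \|y(t)\|_{2}^{2}$, whence $\|y(t)\|_{2}\le e^{-\alpha t}\|y_{0}\|_{2}$; by density this holds for every $y_{0}\in L^{2}(\Omega )$, i.e. $\|e^{(A+KC_{1})t}\|_{L(H,H)}\le e^{-\alpha t}$ for all $t\ge 0$. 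Thus $A+KC_{1}$ generates an exponentially stable semigroup and $(A,C_{1})$ is exponentially detectable. There is no serious obstacle; the one point requiring care is that the only anti-dissipative term, the zero-order potential $a_{0}\chi _{\Omega _{0}}$, is supported inside the observation region $\Omega _{C}$, which is precisely why the localized damping $-\kappa \chi _{\Omega _{C}}$ can absorb it — this is where hypothesis (\ref{22-5}) enters, together with $\lambda <H_{N}$ which keeps the gradient coefficient, and hence $\alpha $, strictly positive.
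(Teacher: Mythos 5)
Your proof is correct and follows essentially the same route as the paper: the same output injection $K=-kI$ with $k\ge a_{0}$, the same absorption of the localized potential term via $\Omega _{0}\sqsubseteq \Omega _{C}$, and the same combination of the Hardy and Poincar\'{e} inequalities. The only (harmless) difference is at the very end: you obtain the pointwise decay $\Vert y(t)\Vert _{2}\le e^{-\alpha t}\Vert y_{0}\Vert _{2}$ directly from the differential inequality, whereas the paper integrates the energy estimate to get $\int_{0}^{\infty }\Vert y(s)\Vert _{2}^{2}\,ds\le C_{N}\Vert y_{0}\Vert _{2}^{2}$ and then invokes Datko's theorem.
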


\begin{proof}
Let $K\equiv -kI$, with $k\geq a_{0}$ and set $A_{1}=A+KC_{1}.$ This is
still $\omega $-$m$-accretive, so that $A_{1}$ generates a $C_{0}$-semigroup
on $L^{2}(\Omega ),$ $S_{1}(t)=e^{A_{1}t}.$ Hence $y(t)=e^{A_{1}t}y_{0}$
satisfies%
\begin{equation}
\frac{dy}{dt}(t)=A_{1}y(t),\mbox{ }t\geq 0,\mbox{ }y(0)=y_{0}.  \label{29}
\end{equation}%
Recalling the expression of $C_{1},$ multiplying (\ref{29}) by $y(t)$ and
applying again (\ref{HN}) we get 
\begin{equation}
\frac{1}{2}\frac{d}{dt}\left\Vert y(t)\right\Vert _{2}^{2}+\left( 1-\frac{%
\lambda }{H_{N}}\right) \left\Vert \nabla y(t)\right\Vert
_{2}^{2}+k\int_{\Omega _{C}}\left\vert y(t)\right\vert ^{2}ds\leq
a_{0}\int_{\Omega _{0}}\left\vert y(t)\right\vert ^{2}dx.  \label{30}
\end{equation}%
We take into account that $\Omega _{0}\sqsubseteq \Omega _{C}$ and $k\geq
a_{0}$, and integrate from $0$ to $t.$ We obtain 
\begin{equation*}
\frac{1}{2}\left\Vert y(t)\right\Vert _{2}^{2}+\left( 1-\frac{\lambda }{H_{N}%
}\right) \int_{0}^{t}\left\Vert \nabla y(s)\right\Vert
_{2}^{2}ds+(k-a_{0})\int_{\Omega _{0}}\left\vert y(t)\right\vert ^{2}ds\leq 
\frac{1}{2}\left\Vert y_{0}\right\Vert _{2}^{2},\mbox{ }\forall t>0.
\end{equation*}%
From here and the Poincar\'{e} inequality it follows that 
\begin{equation}
\int_{0}^{t}\left\Vert y(s)\right\Vert _{2}^{2}ds\leq C_{N}\left\Vert
y_{0}\right\Vert _{2}^{2},\mbox{ for all }t>0,  \label{31}
\end{equation}%
with $C_{N}$ a constant depending on $H_{N}.$ Letting $t\rightarrow \infty $
in (\ref{31}) we finally get that%
\begin{equation}
\int_{0}^{\infty }\left\Vert y(s)\right\Vert _{2}^{2}ds\leq C_{N}\left\Vert
y_{0}\right\Vert _{2}^{2}.  \label{31-0}
\end{equation}

This means by Datko's result, previously recalled, that $e^{A+KC_{1}}$
generates an exponentially stable semigroup, that is there exists $\alpha >0$
such that 
\begin{equation*}
\left\Vert e^{(A+KC_{1})t}y\right\Vert _{2}\leq Ce^{-\alpha t}y_{2}\mbox{
for all }y\in L^{2}(\Omega ).
\end{equation*}%
Then, 
\begin{eqnarray*}
&&\int_{0}^{\infty }\left\Vert B_{2}^{\ast }e^{(A^{\ast }+C_{1}^{\ast
}K^{\ast })t}y\right\Vert _{U}dt\leq C\int_{0}^{\infty }\left\Vert
e^{(A^{\ast }+C_{1}^{\ast }K^{\ast })t}y\right\Vert _{2}dt \\
&\leq &C\left\Vert y\right\Vert _{2}\int_{0}^{\infty }e^{-\alpha
t}dt=C\left\Vert y\right\Vert _{2},\mbox{ }\forall y\in L^{2}(\Omega ),
\end{eqnarray*}%
that is (\ref{12-0}) is verified.
\end{proof}

$(i_{4})$ By (\ref{22-1}) we have 
\begin{equation*}
\left\Vert D_{1}u\right\Vert _{2}^{2}=u^{2}\left\Vert d\right\Vert
_{L^{2}(\Omega \backslash \Omega _{C})}=u^{2}
\end{equation*}%
and 
\begin{equation*}
D_{1}^{\ast }C_{1}y=\int_{\Omega }d(x)\chi _{\Omega _{C}}(x)y(x)dx=0.
\end{equation*}%
The hypotheses being checked, we can formulate the $H^{\infty }$-control
problem for system (\ref{21})-(\ref{21-3}) as in Theorem \ref{Th-main}.

In order to explicit Theorem \ref{Th-main} and to give a differential
formulation for it, as announced in Remark 3.7, we recall that the linear
continuous operator $P\in L(L^{2}(\Omega ),L^{2}(\Omega ))$ can be
represented by the L. Schwartz kernel theorem (see e.g., \cite{Lions-control}%
, p. 166) as an integral operator with a kernel $P_{0}\in L^{2}(\Omega
\times \Omega ),$ namely%
\begin{equation}
P\varphi (x)=\int_{\Omega }P_{0}(x,\xi )\varphi (\xi )d\xi ,\mbox{ for all }%
\varphi \in C_{0}^{\infty }(\Omega ).  \label{36}
\end{equation}%
By (\ref{22}) and (\ref{22-3}) we have%
\begin{eqnarray}
B_{1}B_{1}^{\ast }\varphi (x) &=&\chi _{\omega _{1}}(x)\varphi (x),\mbox{ }%
C_{1}C_{1}^{\ast }\varphi (x)=\chi _{\Omega _{C}}(x)\varphi (x),\mbox{ } 
\notag \\
B_{1}B_{1}^{\ast }P\varphi (x) &=&\chi _{\omega _{1}}(x)\int_{\Omega
}P_{0}(x,\xi )\varphi (\xi )d\xi ,\mbox{ }  \notag \\
PB_{1}B_{1}^{\ast }P\varphi (x) &=&\int_{\Omega }\int_{\Omega }\chi _{\omega
_{1}}(\overline{\xi })P_{0}(x,\overline{\xi })P_{0}(\overline{\xi },\xi
)\varphi (\xi )d\overline{\xi }d\xi   \label{36-0}
\end{eqnarray}%
\begin{eqnarray}
B_{2}B_{2}^{\ast }\varphi (x) &=&b(x)\int_{\Omega }b(\overline{x})\varphi (%
\overline{x})d\overline{x},\mbox{ }x\in \Omega ,  \notag \\
B_{2}B_{2}^{\ast }P\varphi (x) &=&b(x)\int_{\Omega }\int_{\Omega }b(%
\overline{x})P_{0}(\overline{x},\xi )\varphi (\xi )d\overline{x}d\xi ,\mbox{ 
}x\in \Omega ,  \notag \\
PB_{2}B_{2}^{\ast }P\varphi (x) &=&\int_{\Omega }\int_{\Omega }\int_{\Omega
}P_{0}(x,\overline{\xi })P_{0}(\overline{x},\xi )b(\overline{\xi })b(%
\overline{x})\varphi (\xi )d\overline{x}d\overline{\xi }d\xi .  \label{36-1}
\end{eqnarray}%
Moreover, by a straightforward calculation we obtain 
\begin{equation}
A^{\ast }P\varphi (x)=\int_{\Omega }\left( \Delta _{x}P_{0}(x,\xi )+\frac{%
\lambda P_{0}(x,\xi )}{|x|^{2}}+a(x)P_{0}(x,\xi )\right) \varphi (\xi )d\xi ,
\label{36-2}
\end{equation}%
\begin{equation}
PA\varphi (x)=\int_{\Omega }\varphi (\xi )\left( \Delta _{\xi }P_{0}(x,\xi )+%
\frac{\lambda P_{0}(x,\xi )}{|\xi |^{2}}+a(\xi )P_{0}(x,\xi )\right) d\xi ,
\label{36-3}
\end{equation}%
and by denoting $E:=B_{2}B_{2}^{\ast }-\gamma ^{-2}B_{1}B_{1}^{\ast },$ we
have%
\begin{eqnarray*}
PEP\varphi (x) &=&\int_{\Omega }\varphi (\xi )d\xi \int_{\Omega
}\int_{\Omega }P_{0}(x,\overline{\xi })P_{0}(\overline{x},\xi )b(\overline{%
\xi })b(\overline{x})d\overline{x}d\overline{\xi } \\
&&-\gamma ^{-2}\int_{\Omega }\varphi (\xi )d\xi \int_{\Omega }\chi _{\omega
_{1}}(\overline{\xi })P_{0}(x,\overline{\xi })P_{0}(\overline{\xi },\xi )d%
\overline{\xi }.
\end{eqnarray*}%
For $x\in \Omega $ we define the distribution $\mu _{x}\in \mathcal{D}%
^{\prime }(\Omega )$ by 
\begin{equation*}
\mu _{x}(\varphi )=\chi _{\Omega _{C}}(x)\varphi (x)=\int_{\Omega }\delta
(x-\xi )\chi _{\Omega _{C}}(\xi )\varphi (\xi )d\xi ,\mbox{ }\forall \varphi
\in C_{0}^{\infty }(\Omega ),
\end{equation*}%
where $\delta $ is the Dirac distribution. Then, by replacing all these in (%
\ref{15}), we deduce the equation%
\begin{eqnarray}
&&\Delta _{x}P_{0}(x,\xi )+\Delta _{\xi }P_{0}(x,\xi )+\lambda P_{0}(x,\xi
)\left( \frac{1}{|x|^{2}}+\frac{1}{|\xi |^{2}}\right) +(a(x)+a(\xi
))P_{0}(x,\xi )  \notag \\
&&-\int_{\Omega }\int_{\Omega }P_{0}(x,\overline{\xi })P_{0}(\overline{x}%
,\xi )b(\overline{\xi })b(\overline{x})d\overline{x}d\overline{\xi }+\gamma
^{-2}\int_{\Omega }\chi _{\omega _{1}}(\overline{\xi })P_{0}(x,\overline{\xi 
})P_{0}(\overline{\xi },\xi )d\overline{\xi }  \label{37} \\
&=&-\delta (x-\xi )\chi _{\Omega _{C}}(\xi ),\mbox{ in }\mathcal{D}^{\prime
}(\Omega \times \Omega ).  \notag
\end{eqnarray}%
This equation is accompanied by the conditions 
\begin{equation}
P_{0}(x,\xi )=0,\mbox{ }\forall (x,\xi )\in \Gamma \times \Gamma ,
\label{38}
\end{equation}%
\begin{equation}
P_{0}(x,\xi )=P(\xi ,x),\mbox{ }\forall (x,\xi )\in \Omega \times \Omega ,
\label{39}
\end{equation}%
\begin{equation}
P_{0}(x,\xi )\geq 0,\mbox{ }\forall (x,\xi )\in \Omega \times \Omega 
\label{40}
\end{equation}%
and so we can enounce the following

\begin{theorem}
Let $\gamma >0$ and let $A,$ $B_{1},$ $B_{2},$ $C_{1}$ and $D_{1}$ be given
by (\ref{23}) and (\ref{22}), respectively. Then there exists $\widetilde{F}%
\in L(L^{2}(\Omega ),\mathbb{R})$ which solves the $H^{\infty }$-control
problem for system (\ref{21})-(\ref{21-3}) if and only if there exists a
solution $P_{0}\in D(A)\times D(A)$ to (\ref{37})-(\ref{38}), satisfying (%
\ref{39})-(\ref{40}). Moreover, in this case 
\begin{equation}
\widetilde{F}y=-\int_{\Omega }\int_{\Omega }b(x)P_{0}(x,\xi )y(\xi )d\xi dx,%
\mbox{ }\forall y\in L^{2}(\Omega ),  \label{40-0}
\end{equation}%
is a feedback controller which solves the $H^{\infty }$-problem for system (%
\ref{21})-(\ref{21-3}).
\end{theorem}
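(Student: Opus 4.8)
The plan is to read this off as the concrete, integral‑kernel form of Theorem \ref{Th-main}, using that hypotheses $(i_1)$--$(i_4)$ have just been checked for the data (\ref{23}), (\ref{22}). A useful preliminary observation is that here $B_2\in L(\mathbb{R},L^2(\Omega))$ acts by $B_2u=bu$ with $b\in L^2(\Omega)$, so we are in the ``ideal'' situation: $B_2B_2^\ast P$ and $B_1B_1^\ast P=\chi_{\omega_1}P$ are bounded on $H=L^2(\Omega)$, whence $\Lambda_P=A-B_2B_2^\ast P+\gamma^{-2}B_1B_1^\ast P$ is a bounded perturbation of $A$ and in particular closed with $D(\Lambda_P)=D(A)$. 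By (\ref{224-1}) the space $\mathcal{X}$ of Theorem \ref{Th-main} is therefore $D(A)$, and since $A=A^\ast$ is self‑adjoint the assertion $P\in L(\mathcal{X},D(A^\ast))$ means $P\in L(D(A),D(A))$.

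For the direct implication I would argue as follows. Suppose $\widetilde F\in L(L^2(\Omega),\mathbb{R})$ solves the $H^\infty$-problem for (\ref{21})--(\ref{21-3}). Then $\widetilde F$ plays the role of $F$ in Theorem \ref{Th-main}: $A+B_2\widetilde F$ is a bounded perturbation of the analytic generator $A$, hence it again generates an analytic $C_0$-semigroup, which is exponentially stable because $\widetilde F$ solves the problem and (\ref{14-1}) holds. Theorem \ref{Th-main} then yields $P\in L(H,H)\cap L(D(A),D(A))$, $P=P^\ast\ge0$, solving the abstract Riccati equation (\ref{15}). By the L.~Schwartz kernel theorem $P$ is represented as in (\ref{36}) by a kernel $P_0\in L^2(\Omega\times\Omega)$, and the inclusion $P\in L(D(A),D(A))$ upgrades this to $P_0\in D(A)\times D(A)$ (i.e. $P_0(\cdot,\xi),P_0(x,\cdot)\in D(A)$ for a.e.\ $x,\xi$), which in particular carries the boundary condition (\ref{38}). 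Substituting the explicit kernels (\ref{36-0})--(\ref{36-3}) into (\ref{15}) and testing against tensor products $\varphi\otimes\psi$ with $\varphi,\psi\in C_0^\infty(\Omega)$ converts (\ref{15}) into the distributional identity (\ref{37}), while $P=P^\ast$ and $P\ge0$ become (\ref{39}) and (\ref{40}).

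For the converse, starting from $P_0\in D(A)\times D(A)$ solving (\ref{37})--(\ref{38}) with (\ref{39})--(\ref{40}), I would define $P$ by (\ref{36}); it extends to $P\in L(L^2(\Omega),L^2(\Omega))$ since $P_0\in L^2(\Omega\times\Omega)$, satisfies $P=P^\ast$ by (\ref{39}), $P\ge0$ by (\ref{40}) and $P\in L(D(A),D(A^\ast))$ by the assumed regularity of $P_0$; reversing the kernel calculus gives that $P$ solves (\ref{15}). To apply the converse part of Theorem \ref{Th-main} it then remains to see that $\Lambda_P$ and $\Lambda_P^1=A-B_2B_2^\ast P$ generate exponentially stable semigroups. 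Pairing (\ref{15}) with $y$ and using $P=P^\ast$, $A=A^\ast$ gives the Lyapunov identity $2(P\Lambda_P^1y,y)_2=-\|C_1y\|_2^2-\|B_2^\ast Py\|_U^2-\gamma^{-2}\|B_1^\ast Py\|_2^2\le0$; combined with the detectability established above for $(A,C_1)$ and the $L^1$-admissibility (\ref{12}), this yields exponential stability of $e^{\Lambda_P^1t}$ and, by the same computation used in the proof of Theorem \ref{Th-main} around (\ref{12-0}), of $e^{\Lambda_Pt}$. Theorem \ref{Th-main} then gives that $\widetilde F=-B_2^\ast P$ solves the $H^\infty$-problem, and $\widetilde Fy=-B_2^\ast Py=-\int_\Omega b(x)\bigl(\int_\Omega P_0(x,\xi)y(\xi)\,d\xi\bigr)dx$, which is (\ref{40-0}).

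The hard part will be making the two‑way passage between the operator equation (\ref{15}) on $H$ and the distributional PDE (\ref{37}) on $\Omega\times\Omega$ rigorous: pinning down the regularity class $P_0\in D(A)\times D(A)$ as a genuine bootstrap from $P\in L(D(A),D(A))$, giving precise meaning to $\Delta_xP_0+\Delta_\xi P_0$, to the source $-\delta(x-\xi)\chi_{\Omega_C}(\xi)$, and to (\ref{38}) as a trace condition on $\Gamma\times\Gamma$, and — in the converse direction — re‑deriving the exponential stability of $\Lambda_P$ and $\Lambda_P^1$, which the converse part of Theorem \ref{Th-main} takes as a hypothesis, from the Riccati equation together with the $\omega$-$m$-accretivity/Hardy estimates and the detectability already proved in the two lemmas above. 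The symmetry and sign conditions (\ref{39})--(\ref{40}) are immediate from $P=P^\ast\ge0$; the rest is bookkeeping once the kernel formulas (\ref{36-0})--(\ref{36-3}) are in hand.
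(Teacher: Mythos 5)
Your proposal is correct and follows essentially the same route as the paper, which states this theorem as an immediate consequence of Theorem \ref{Th-main} (hypotheses $(i_{1})$--$(i_{4})$ having been verified) together with the Schwartz kernel representation (\ref{36}) and the operator computations (\ref{36-0})--(\ref{36-3}) that convert the Riccati equation (\ref{15}) into the distributional equation (\ref{37}); your identification $\mathcal{X}=D(\Lambda_{P})=D(A)$ via boundedness of $B_{2}B_{2}^{\ast}P$ and $B_{1}B_{1}^{\ast}P$ is exactly the remark the paper makes after the theorem. Your additional care in the converse direction --- deriving exponential stability of $\Lambda_{P}$ and $\Lambda_{P}^{1}$ from the Lyapunov identity, detectability and (\ref{12-0}) rather than assuming it, since the statement of the theorem does not include it as a hypothesis --- is a point the paper leaves implicit, and your sketch of it matches the stability argument already used in the proof of Theorem \ref{Th-main}.
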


In this case it is easily seen that $\Lambda _{P}=A-B_{2}B_{2}^{\ast
}P+\gamma ^{-2}B_{1}B_{1}^{\ast }P$ has the domain $D(\Lambda _{P})=D(A),$
and since $\Lambda _{P}$ is closed it follows that $\mathcal{X}=D(A).$
Moreover, by (\ref{40-0}) we see that $\widetilde{F}\in L(L^{2}(\Omega ),%
\mathbb{R}).$

A\ direct approach of problem (\ref{37})-(\ref{40}) is an interesting
problem by itself but is beyond the objective of this work.

\section{Dirichlet boundary control\label{Boundary}}

As in the previous section let $\Omega $ be an open bounded subset of $%
\mathbb{R}^{N},$ $N>3$ with the boundary $\Gamma =\partial \Omega $
sufficiently smooth and such that $0\in \Omega .$ Consider the following
system%
\begin{eqnarray}
&&\left. y_{t}-\Delta y-\frac{\lambda y}{|x|^{2}}-a(x)y=B_{1}w,\right.
\left. \mbox{in }(0,\infty )\times \Omega ,\right.  \label{41} \\
&&\left. y=\widetilde{u},\right. \left. \mbox{ \ \ \ \ \ \ \ \ \ \ \ \ \ \ \
\ \ \ \ \ \ \ \ \ \ \ \ \ \ \ \ \ on }(0,\infty )\times \Gamma ,\right.
\label{42} \\
&&\left. y(0)=y_{0},\right. \left. \mbox{ \ \ \ \ \ \ \ \ \ \ \ \ \ \ \ \ \
\ \ \ \ \ \ \ \ \ \ in }\Omega ,\right.  \label{43} \\
&&\left. z=C_{1}y+D_{1}u,\right. \left. \mbox{ \ \ \ \ \ \ \ \ \ \ \ \ \ \ \
\ \ \ \ \ in }(0,\infty )\times \Omega ,\right.  \label{44}
\end{eqnarray}%
where $y_{0}\in L^{2}(\Omega )$, $a$ is again given by (\ref{21-4}) and 
\begin{eqnarray}
\widetilde{u}(t,x) &=&\sum\limits_{j=1}^{m}\alpha _{j}(x)u_{j}(t),\mbox{ }%
u_{j}(t)\in \mathbb{R}\mbox{ a.e. }t\in (0,\infty ),\mbox{ }j=1,...,m,
\label{44-1} \\
\alpha &=&(\alpha _{1},...,\alpha _{m})\in \left( L^{2}(\Gamma )\right) ^{m},%
\mbox{ }\alpha _{j}\geq 0\mbox{ a.e. }x\in \Gamma .  \notag
\end{eqnarray}%
We assume in addition that 
\begin{equation}
\frac{D_{0}\alpha _{j}}{x}\in L^{2}(\Omega ),\mbox{ }j=1,...,m.  \label{46}
\end{equation}%
The expression (\ref{44-1}) allows the possibility to consider combinations
of conditions on subsets of the boundary for the controls $u_{j}(t)\in 
\mathbb{R}.$ The hypothesis (\ref{46}) will be justified later.

$(i_{1})$ For this problem we choose%
\begin{equation}
H=W=Z=L^{2}(\Omega ),\mbox{ }U=\mathbb{R}^{m},  \label{46-1}
\end{equation}%
\begin{equation}
B_{1}w=\chi _{\omega _{1}}(x)w,\mbox{ }C_{1}y=\chi _{\Omega _{C}}(x)y,\mbox{
\ }D_{1}u=\sum\limits_{j=1}^{m}d_{j}(x)u_{j},\mbox{ }x\in \Omega ,
\label{44-2}
\end{equation}%
$u=(u_{1},...,u_{m}),$ with the conditions $\omega _{1}\sqsubseteq \Omega ,$ 
$\Omega _{0}\sqsubseteq \Omega _{C},$ and 
\begin{equation}
d_{j}\in L^{2}(\Omega ),\mbox{ }d_{j}(x)=0\mbox{ on }\Omega _{C},\mbox{ }%
\int_{\Omega \backslash \Omega _{C}}d_{j}d_{k}dx=\delta _{jk}.  \label{44-4}
\end{equation}%
Thus, $B_{1}\in L(L^{2}(\Omega ),L^{2}(\Omega )),$ $C_{1}\in L(L^{2}(\Omega
),L^{2}(\Omega ))$ and $D_{1}:U\rightarrow L^{2}(\Omega ).$ The operator $%
B_{2}$ will be further defined. The operator $A$ is the same as before, that
is%
\begin{equation}
A:D(A)\subset L^{2}(\Omega )\rightarrow L^{2}(\Omega ),\mbox{ }Ay=\Delta y+%
\frac{\lambda y}{|x|^{2}}+a(x)y,  \label{45-1}
\end{equation}%
\begin{equation}
D(A)=\left\{ y\in H_{0}^{1}(\Omega );\mbox{ }Ay\in L^{2}(\Omega )\right\} .
\label{45-2}
\end{equation}%
By Lemma 4.1, for $\lambda <H_{N}$ and $\omega >a_{0},$ it follows that $-A$
is $\omega $-$m$-accretive on $L^{2}(\Omega )$ and self-adjoint, so that $A$
generates a $C_{0}$ compact semigroup $e^{At}$ on $L^{2}(\Omega )$.
Moreover, as we shall see later, if $y\in D(A)$ then $y\in H^{2}(\Omega
\backslash \{0\}).$

In order to write equation\ (\ref{41}) in the operatorial form, we need some
preliminaries. Let us consider the problem 
\begin{equation}
\Delta \theta =0\mbox{ in }\Omega ,\mbox{ }\theta =v\mbox{ on }\Gamma ,\mbox{
for }t>0.  \label{47}
\end{equation}%
The boundary condition is meant in the sense of the trace of $\theta $ on $%
\Gamma ,$ generally denoted by $tr(\theta ).$ But, if any confusion is
avoided we shall no longer indicate the trace by the symbol $\mathit{tr}$.
The unique solution to this problem is the well-known Dirichlet map, $%
v\rightarrow \theta ,$ here denoted by $D_{0}v.$ If $v\in L^{2}(\Gamma ),$
then $D_{0}:L^{2}(\Gamma )\rightarrow H^{1/2}(\Omega )$ and it satisfies $%
\left\Vert D_{0}v\right\Vert _{H^{1/2}(\Omega )}\leq C\left\Vert
v\right\Vert _{L^{2}(\Gamma )}$ (see e.g. \cite{L-T}).

In our case, $v=\widetilde{u}\in L^{2}(\mathbb{R}_{+};L^{2}(\Gamma ))$ and
so $D_{0}\widetilde{u}(t)\in H^{1/2}(\Omega )$ and 
\begin{equation*}
\left\Vert D_{0}\widetilde{u}(t)\right\Vert _{H^{1/2}(\Omega )}\leq
C\left\Vert \widetilde{u}(t)\right\Vert _{L^{2}(\Gamma )},\mbox{ a.e. }t>0.
\end{equation*}%
Moreover, since $\widetilde{u}$ is given by (\ref{44-1}) and $D_{0}$ is
linear we have 
\begin{equation}
D_{0}\widetilde{u}(t)=\sum\limits_{j=1}^{m}u_{j}(t)D_{0}\alpha _{j},\mbox{ }%
t>0.  \label{47-1}
\end{equation}%
Let us introduce the operator 
\begin{equation}
A_{0}:D(A_{0})=D(A)\subset L^{2}(\Omega )\rightarrow L^{2}(\Omega ),\mbox{ }%
A_{0}y=\Delta y+\frac{\lambda y}{\left\vert x\right\vert ^{2}}.  \label{47-2}
\end{equation}%
This operator is $m$-dissipative on $L^{2}(\Omega )$ by a similar proof as
in Lemma 4.1. Let us determine the Dirichlet mapping $v\rightarrow Dv$
corresponding to $A_{0},$ that is 
\begin{equation}
\Delta Dv+\frac{\lambda Dv}{|x|^{2}}=0\mbox{ in }\Omega ,\mbox{ }Dv=v\mbox{
on }\Gamma .  \label{48}
\end{equation}

\begin{lemma}
For $\lambda <H_{N},$ $Dv$ associated to $A_{0}$ exists and it is unique for 
$v\in L^{2}(\Gamma )$ satisfying $\frac{D_{0}v}{x}\in L^{2}(\Omega ).$
Moreover, one has 
\begin{equation}
Dv\in H^{1/2}(\Omega )\mbox{ and }\left\Vert Dv\right\Vert _{H^{1/2}(\Omega
)}\leq C\left( \left\Vert v\right\Vert _{L^{2}(\Gamma )}+\left\Vert \frac{%
D_{0}v}{x}\right\Vert _{L^{2}(\Omega )}\right) .  \label{49}
\end{equation}
\end{lemma}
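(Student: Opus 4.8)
The plan is to seek $Dv$ in the form $Dv=D_{0}v+z$, where $D_{0}v$ is the harmonic extension already constructed (which satisfies $\left\Vert D_{0}v\right\Vert _{H^{1/2}(\Omega )}\leq C\left\Vert v\right\Vert _{L^{2}(\Gamma )}$) and $z\in H_{0}^{1}(\Omega )$ is a correction to be determined. Since $\Delta D_{0}v=0$, equation (\ref{48}) for $Dv$ is equivalent to asking $z$ to solve, in the weak sense,
\begin{equation*}
-\Delta z-\frac{\lambda z}{|x|^{2}}=\frac{\lambda D_{0}v}{|x|^{2}}\mbox{ in }\Omega ,\quad z=0\mbox{ on }\Gamma .
\end{equation*}
Thus it suffices to show this problem has a unique solution $z\in H_{0}^{1}(\Omega )$ with $\left\Vert z\right\Vert _{H_{0}^{1}(\Omega )}\leq C\left\Vert D_{0}v/x\right\Vert _{2}$; then $Dv=D_{0}v+z\in H^{1/2}(\Omega )$ (using the embedding $H_{0}^{1}(\Omega )\hookrightarrow H^{1/2}(\Omega )$), and the estimate (\ref{49}) follows from the triangle inequality. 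Uniqueness of $Dv$ itself is then immediate, since the difference of two solutions lies in $H_{0}^{1}(\Omega )$ and solves the homogeneous equation $-\Delta w-\lambda w/|x|^{2}=0$, hence vanishes by the coercivity estimate below.

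First I would check that the right-hand side $f:=\lambda D_{0}v/|x|^{2}$ defines an element of $H^{-1}(\Omega )$ under the hypothesis $D_{0}v/x\in L^{2}(\Omega )$. For $\varphi \in H_{0}^{1}(\Omega )$, writing $f\varphi =\lambda (D_{0}v/|x|)(\varphi /|x|)$ and applying the Hardy inequality (\ref{HN}) to $\varphi $,
\begin{equation*}
\left\vert \int_{\Omega }\frac{\lambda D_{0}v}{|x|^{2}}\varphi \,dx\right\vert \leq \lambda \left\Vert \frac{D_{0}v}{x}\right\Vert _{2}\left\Vert \frac{\varphi }{x}\right\Vert _{2}\leq \frac{\lambda }{\sqrt{H_{N}}}\left\Vert \frac{D_{0}v}{x}\right\Vert _{2}\left\Vert \nabla \varphi \right\Vert _{2},
\end{equation*}
so $\left\Vert f\right\Vert _{H^{-1}(\Omega )}\leq C\left\Vert D_{0}v/x\right\Vert _{2}$. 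The same computation shows each term in the weak formulation is finite, since $z/x,\varphi /x\in L^{2}(\Omega )$ for $z,\varphi \in H_{0}^{1}(\Omega )$, again by (\ref{HN}).

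Next I would solve for $z$ exactly as in the proof of Lemma 4.1: minimize over $H_{0}^{1}(\Omega )$ the strictly convex functional
\begin{equation*}
J(z)=\int_{\Omega }\left( \frac{1}{2}|\nabla z|^{2}-\frac{\lambda }{2}\frac{z^{2}}{|x|^{2}}-\frac{\lambda D_{0}v}{|x|^{2}}z\right) dx,
\end{equation*}
whose Euler--Lagrange equation (from variations $z+\sigma \eta $, $\sigma \rightarrow 0^{\pm }$) is precisely the weak form of the displayed boundary value problem. Since $\lambda <H_{N}$, the Hardy inequality (\ref{HN}) yields $J(z)\geq \frac{1}{2}(1-\lambda /H_{N})\left\Vert \nabla z\right\Vert _{2}^{2}-C\left\Vert D_{0}v/x\right\Vert _{2}\left\Vert \nabla z\right\Vert _{2}$, so $J$ is bounded below and coercive on $H_{0}^{1}(\Omega )$; a minimizing sequence is bounded in $H_{0}^{1}(\Omega )$, and weak lower semicontinuity (with the strong $L^{2}$-convergence and a.e.\ identification of $z_{n}/x$, as in Lemma 4.1) produces a minimizer, unique by strict convexity. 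Testing the Euler--Lagrange identity with $\eta =z$ and using (\ref{HN}) once more gives $(1-\lambda /H_{N})\left\Vert \nabla z\right\Vert _{2}^{2}\leq \frac{\lambda }{\sqrt{H_{N}}}\left\Vert D_{0}v/x\right\Vert _{2}\left\Vert \nabla z\right\Vert _{2}$, hence $\left\Vert z\right\Vert _{H_{0}^{1}(\Omega )}\leq C_{N}\left\Vert D_{0}v/x\right\Vert _{2}$, the bound needed above.

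The only real subtlety is the first step: making sense of the singular source $\lambda D_{0}v/|x|^{2}$ near $x=0$, and this is exactly what the hypothesis (\ref{46}), i.e.\ $D_{0}v/x\in L^{2}(\Omega )$, is tailored to provide. Once $f\in H^{-1}(\Omega )$ is established, the rest is the standard coercive variational argument for the operator $-A_{0}$; note that no $\omega $-shift is needed here, in contrast to Lemma 4.1, because the absence of the lower-order term $a(x)$ makes $-A_{0}$ itself coercive on $H_{0}^{1}(\Omega )$ for $\lambda <H_{N}$.
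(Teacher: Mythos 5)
Your proposal is correct and follows essentially the same route as the paper: the author likewise writes $Dv=D_{0}v+\varphi$ with $\varphi\in H_{0}^{1}(\Omega)$ solving the corrected problem (\ref{50}), obtains $\varphi$ by minimizing the same quadratic functional (coercive via the Hardy inequality since $\lambda<H_{N}$, with $D_{0}v/x\in L^{2}(\Omega)$ making the source term finite), and then deduces (\ref{49}) by the triangle inequality. Your write-up merely makes explicit a few details the paper leaves implicit (the $H^{-1}$ bound on the source and the testing-with-$z$ energy estimate), so no further comment is needed.
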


\begin{proof}
Let $t$ be fixed and denote $\varphi =Dv-D_{0}v$ and consider the equation 
\begin{equation}
\Delta \varphi +\frac{\lambda \varphi }{\left\vert x\right\vert ^{2}}=-\frac{%
\lambda D_{0}v}{\left\vert x\right\vert ^{2}}\mbox{ in }\Omega ,\mbox{ }%
\varphi =0\mbox{ on }\Gamma .  \label{50}
\end{equation}%
We assert that problem (\ref{50}) has a unique solution in $D(A)$ and prove
it via a variational technique, by showing that the solution to (\ref{50})
is given by the minimization of the functional $\Psi (\varphi ),$ 
\begin{equation}
\min_{\varphi \in H_{0}^{1}(\Omega )}\left\{ \Psi (\varphi )=\int_{\Omega
}\left( \frac{1}{2}\left\vert \nabla \varphi \right\vert ^{2}-\frac{1}{2}%
\frac{\lambda \varphi ^{2}}{\left\vert x\right\vert ^{2}}-\frac{\lambda
\varphi D_{0}v}{\left\vert x\right\vert ^{2}}\right) dx\right\} .  \label{51}
\end{equation}%
It is easily seen that 
\begin{equation*}
\left( \frac{1}{2}-\frac{\lambda }{H_{N}}\right) \int_{\Omega }\left\vert
\nabla \varphi \right\vert ^{2}dx-\lambda \int_{\Omega }\left\vert \frac{%
D_{0}v}{x}\right\vert ^{2}dx\leq \Psi (\varphi )<\infty ,
\end{equation*}%
so that $\Psi $ has an infimum $d.$ We note here the necessity of the
assumption $\frac{D_{0}v}{x}\in L^{2}(\Omega )$. Next, we proceed as in
Lemma 4.1 and show that $\varphi \in H_{0}^{1}(\Omega )$ is the unique weak
solution to the equation (\ref{50}). By (\ref{50}) we note that by
multiplying by $\varphi $ we get 
\begin{equation*}
\left\Vert \nabla \varphi \right\Vert _{2}^{2}+\frac{\lambda }{2}\left\Vert 
\frac{\varphi }{x}\right\Vert _{2}^{2}\leq \frac{\lambda }{2}\left\Vert 
\frac{D_{0}v}{x}\right\Vert _{2}^{2}.
\end{equation*}

Then, it follows that $Dv=\varphi +D_{0}v$ which is the Dirichlet map for (%
\ref{48}), has the properties $Dv\in H^{1/2}(\Omega ),$ $\frac{Dv}{x}=\frac{%
\varphi }{x}+\frac{D_{0}v}{x}\in L^{2}(\Omega )$ and $\left\Vert
Dv\right\Vert _{H^{1/2}(\Omega )}\leq \left\Vert \varphi +D_{0}v\right\Vert
_{H^{1/2}(\Omega )}\leq C\left( \left\Vert \varphi \right\Vert
_{H_{0}^{1}(\Omega )}+\left\Vert D_{0}v\right\Vert _{H^{1/2}(\Omega
)}\right) ,$ implying (\ref{49}).
\end{proof}

\noindent Lemma 5.1 implies that the operator $D:L^{2}(\Gamma )\rightarrow
L^{2}(\Omega )$ with the domain $\left\{ v\in L^{2}(\Gamma );\frac{D_{0}v}{x}%
\in L^{2}(\Omega )\right\} $ is closed and densely defined. We denote by $%
D^{\ast }:L^{2}(\Omega )\rightarrow L^{2}(\Gamma )$ its adjoint.

Now, we can write the operatorial form of the system. Let $%
u=(u_{1},...,u_{m})$ and assume for the beginning that 
\begin{equation*}
u\in W^{1,2}(0,T;\mathbb{R}^{m}),\mbox{ }w\in W^{1,2}(0,T;L^{2}(\Omega )),%
\mbox{ }T\geq 0
\end{equation*}%
and note that $D\widetilde{u}(t)$ is well defined due to (\ref{46}), $D%
\widetilde{u}(t)\in H^{1/2}(\Omega )$ and 
\begin{equation*}
\left\Vert D\widetilde{u}(t)\right\Vert _{H^{1/2}(\Omega )}\leq
C\sum\limits_{j=1}^{m}|u_{j}(t)|\left( \left\Vert \alpha _{j}\right\Vert
_{L^{2}(\Gamma )}+\left\Vert \frac{D_{0}\alpha _{j}}{x}\right\Vert
_{L^{2}(\Omega )}\right) ,\mbox{ a.e. }t>0.
\end{equation*}%
We and write the difference system (\ref{41}) and (\ref{48}),%
\begin{eqnarray*}
&&(y-D\widetilde{u})_{t}-\Delta (y-D\widetilde{u})-\frac{\lambda (y-D%
\widetilde{u})}{|x|^{2}}-a(x)(y-D\widetilde{u}) \\
&=&B_{1}w-(D\widetilde{u})_{t}+a(x)D\widetilde{u},\mbox{ in }(0,\infty
)\times \Omega , \\
y-D\widetilde{u} &=&0\mbox{, on }(0,\infty )\times \Gamma ,\mbox{ }(y-D%
\widetilde{u})(0)=y_{0}-\widetilde{\theta _{0}}\mbox{ in }\Omega ,
\end{eqnarray*}%
where $\widetilde{\theta _{0}}=D\widetilde{u}(0).$ The solution to the
previous system reads%
\begin{equation*}
(y-D\widetilde{u})(t)=e^{At}(y_{0}-\widetilde{\theta _{0}}%
)+\int_{0}^{t}e^{A(t-s)}(B_{1}w+aD\widetilde{u})(s)ds-%
\int_{0}^{t}e^{A(t-s)}(D\widetilde{u})_{t}(s)ds.
\end{equation*}%
Integrating by parts the last right-hand side term we obtain%
\begin{eqnarray*}
y(t)-D\widetilde{u}(t) &=&e^{At}y_{0}-e^{At}\widetilde{\theta _{0}}%
+\int_{0}^{t}e^{A(t-s)}(B_{1}w+a(x)D\widetilde{u})(s)ds \\
&&-D\widetilde{u}(t)+e^{At}\widetilde{\theta _{0}}-\int_{0}^{t}e^{A(t-s)}AD%
\widetilde{u}(s)dx
\end{eqnarray*}%
which yields 
\begin{equation*}
y(t)=e^{At}y_{0}-\int_{0}^{t}e^{A(t-s)}AD\widetilde{u}(s)dx+%
\int_{0}^{t}e^{A(t-s)}(B_{1}w+a(x)D\widetilde{u})(s)ds.
\end{equation*}%
The formula is preserved by density if $u\in L^{2}(0,T;\mathbb{R}^{m})$ and $%
w\in L^{2}(0,T;L^{2}(\Omega ))$ and this represents the solution to the
equation 
\begin{equation}
y^{\prime }(t)=Ay(t)+B_{1}w(t)-AD\widetilde{u}(t)+a(x)D\widetilde{u}(t),%
\mbox{ }y(0)=y_{0}.  \label{53}
\end{equation}%
Since $D\widetilde{u}(t)$ is not in $D(A)$ one must interpret $AD\widetilde{u%
}(t)$ by using the extension $\widetilde{A}$ of $A$ to the whole space $%
L^{2}(\Omega )$ by 
\begin{equation}
\widetilde{A}:L^{2}(\Omega )\rightarrow (D(A))^{\prime },\mbox{ }%
\left\langle \widetilde{A}y,\psi \right\rangle _{(D(A))^{\prime
},D(A)}=(y,A\psi ),\mbox{ }\forall \psi \in D(A),  \label{54}
\end{equation}%
see (\ref{401}). Now, we can define $B_{2}:U\rightarrow (D(A))^{\prime },$ 
\begin{equation}
B_{2}u=-\widetilde{A}\left( \sum\limits_{j=1}^{m}u_{j}D\alpha _{j}\right)
+a(x)\sum\limits_{j=1}^{m}u_{j}D\alpha
_{j}=-\sum\limits_{j=1}^{m}u_{j}A_{0}D\alpha _{j}\mbox{,}  \label{55}
\end{equation}%
where $u=(u_{1},...,u_{m})\in U=\mathbb{R}^{m}.$ Expression (\ref{55}) is
well defined since $D\alpha _{j}\in H^{1/2}(\Omega )\subset L^{2}(\Omega )$
and $a\in L^{\infty }(\Omega ).$ Eventually, we can express equations (\ref%
{41})-(\ref{42}) as%
\begin{eqnarray}
y^{\prime }(t) &=&Ay(t)+B_{1}w(t)+B_{2}\widetilde{u}(t),\mbox{ }t\geq 0,%
\mbox{ }  \label{56} \\
y(0) &=&y_{0}  \notag
\end{eqnarray}%
with $\widetilde{A}$ defined in (\ref{54}), $B_{2}$ defined in (\ref{55})
and $\widetilde{u}$ defined in (\ref{44-1}).

\medskip

$(i_{2})$ For verifying (\ref{12}) we need to calculate $B_{2}^{\ast }$ and $%
D^{\ast }.$ We denote by $\frac{\partial v}{\partial \nu }$ the normal
derivative of $v$ on the boundary $\Gamma .$ We give the following lemma.

\begin{lemma}
The operator $B_{2}^{\ast }:D(A)\rightarrow \mathbb{R}^{m}$ is given by 
\begin{equation}
(B_{2}^{\ast }v)_{j}=-\left( \alpha _{j},\frac{\partial v}{\partial \nu }%
\right) _{L^{2}(\Gamma )}\mbox{, for }v\in D(A),\mbox{ }j=1,...,m,
\label{60}
\end{equation}%
where $\frac{\partial v}{\partial \nu }\in L^{2}(\Gamma ).$

The operator $D^{\ast }:L^{2}(\Omega )\rightarrow L^{2}(\Gamma ),$ is
defined by%
\begin{equation}
D^{\ast }p=\frac{\partial }{\partial \nu }(A_{0}^{-1}p)\mbox{ on }\Gamma ,%
\mbox{ for }p\in L^{2}(\Omega ).  \label{59-1}
\end{equation}
\end{lemma}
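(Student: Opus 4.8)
The plan is to derive both formulas from a single generalized Green identity for the Hardy--Dirichlet map $D$. First I would collect the facts about $A_0$ that are needed. Since $A_0=A-a(x)$ with $a\in L^\infty(\Omega)$ real and $D(A_0)=D(A)$, the operator $A_0$ is self-adjoint on $L^2(\Omega)$, and for $\lambda<H_N$ the estimate $(-A_0q,q)_2=\|\nabla q\|_2^2-\lambda\int_\Omega|x|^{-2}|q|^2\,dx\ge(1-\lambda/H_N)\|\nabla q\|_2^2\ge c\|q\|_2^2$ (Hardy plus Poincar\'{e}) shows that $-A_0$ is coercive, so $A_0^{-1}\in L(L^2(\Omega),D(A))$ is well defined. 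Next I would observe that for $q\in D(A_0)$ the Hardy coefficient $\lambda/|x|^2$ is smooth in a neighbourhood of $\Gamma$ (the singularity sits at the interior point $0$), so that $\Delta q=A_0q-\lambda q/|x|^2\in L^2$ near $\Gamma$; combined with $q\in H^1_0(\Omega)$, boundary elliptic regularity gives $q\in H^2$ near $\Gamma$, hence the normal trace $q\mapsto\partial q/\partial\nu$ is continuous from $D(A_0)$ into $L^2(\Gamma)$ (indeed into $H^{1/2}(\Gamma)$). In particular $\partial v/\partial\nu\in L^2(\Gamma)$ for $v\in D(A)$, and $\partial(A_0^{-1}p)/\partial\nu\in L^2(\Gamma)$ for $p\in L^2(\Omega)$.

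The central step is the identity
\[
(Dw,g)_{L^2(\Omega)}=\Big(w,\tfrac{\partial}{\partial\nu}\big(A_0^{-1}g\big)\Big)_{L^2(\Gamma)},\qquad g\in L^2(\Omega),\ \ w\in\mathrm{dom}(D).
\]
To prove it I would set $q=A_0^{-1}g\in D(A_0)$, approximate $w$ within the graph norm of $D$ by smooth boundary data $w_n$ (so that $Dw_n$ is smooth up to $\Gamma$, smooth away from $0$, and $Dw_n\to Dw$ in $L^2(\Omega)$, using the closedness of $D$ recorded after Lemma~5.1), and apply the classical Green second identity on $\Omega_\varepsilon:=\Omega\setminus\overline{B(0,\varepsilon)}$. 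There all functions are smooth enough and $\Delta Dw_n=-\lambda Dw_n/|x|^2$, $\Delta q=g-\lambda q/|x|^2$, so the Hardy terms cancel and the volume contribution collapses to $\int_{\Omega_\varepsilon}(Dw_n)\,g\,dx$. On $\Gamma$ one uses $q=0$ and $Dw_n=w_n$, producing the right-hand side; the spurious surface integral over $\partial B(0,\varepsilon)$ is handled by the finite-energy bound, since $\int_0^{\varepsilon_0}\|\nabla q\|_{L^2(\partial B_r)}\|Dw_n\|_{L^2(\partial B_r)}\,dr<\infty$ (Cauchy--Schwarz, using $q,Dw_n\in H^1$ near $0$) forces that integrand to be $L^1$ near $0$, hence to vanish along a sequence $\varepsilon_m\to0$; since the other two terms converge, the surface term must tend to $0$. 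Letting $\varepsilon\to0$ and then $n\to\infty$ gives the identity.

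Granting it, the two conclusions are immediate. For $D^*$: the identity reads $(Dw,p)_{L^2(\Omega)}=(w,\partial_\nu(A_0^{-1}p))_{L^2(\Gamma)}$ for all $w\in\mathrm{dom}(D)$ and $p\in L^2(\Omega)$, and since $\partial_\nu(A_0^{-1}p)\in L^2(\Gamma)$ this says exactly $D^*p=\partial(A_0^{-1}p)/\partial\nu$ on $\Gamma$, i.e. (59-1). For $B_2^*$: writing $\widetilde A=\widetilde{A_0}+a$ in (55), with $\widetilde{A_0}\colon L^2(\Omega)\to(D(A))'$ the extension of $A_0$ defined as in (54), we get $B_2u=-\widetilde{A_0}\big(\sum_ju_jD\alpha_j\big)$, so for $v\in D(A^*)=D(A)$,
\[
\langle B_2u,v\rangle_{(D(A))',D(A)}=-\sum_{j=1}^m u_j\,\big\langle\widetilde{A_0}D\alpha_j,v\big\rangle=-\sum_{j=1}^m u_j\,(D\alpha_j,A_0v)_{L^2(\Omega)},
\]
by the defining relation of the extension and $A_0^*=A_0$. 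Applying the central identity with $g=A_0v$ (so $q=v$) and $w=\alpha_j$ (legitimate since (46) says $\alpha_j\in\mathrm{dom}(D)$) yields $(D\alpha_j,A_0v)_{L^2(\Omega)}=(\alpha_j,\partial v/\partial\nu)_{L^2(\Gamma)}$, whence $(B_2^*v)_j=-(\alpha_j,\partial v/\partial\nu)_{L^2(\Gamma)}$, which is (60).

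The main obstacle is the rigorous proof of the central Green identity: it must simultaneously cope with the interior Hardy singularity at $0$ — which forces excising $B(0,\varepsilon)$ and a careful argument that the resulting interior surface integral vanishes in the limit — and with the fact that $Dw$ (hence $D\alpha_j$) is only $H^{1/2}(\Omega)$ up to $\Gamma$, so that a naive integration by parts against $v$ is unavailable and one must pass through the smooth approximants $w_n$ together with the closedness of $D$. Everything else is bookkeeping.
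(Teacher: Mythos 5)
Your overall architecture coincides with the paper's: reduce both formulas to a single duality identity $(D\beta,p)_{L^2(\Omega)}=(\beta,\partial_\nu(A_0^{-1}p))_{L^2(\Gamma)}$, obtain the $L^2(\Gamma)$ regularity of the normal trace by localizing away from the interior singularity (your boundary elliptic regularity argument is exactly the paper's cutoff $\varphi\in C^4(\overline\Omega)$ vanishing near $0$), and then read off $D^*$ and $(B_2^*v)_j=(D\alpha_j,-A_0v)=-(\alpha_j,\partial_\nu v)$ exactly as in (\ref{57})--(\ref{59-0}). Where you genuinely diverge is in how the Hardy singularity is tamed when justifying Green's formula: the paper regularizes the potential, replacing $-A_0$ by $-A_{0,\varepsilon}=-\Delta-\lambda/(|x|^2+\varepsilon)$ with domain $H^2(\Omega)\cap H^1_0(\Omega)$, so that Green's identity holds globally on $\Omega$ with no interior boundary terms, and then passes to the limit $\varepsilon\to0$ (approximating the boundary datum in $H^1(\Gamma)$ first and concluding by density). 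You instead keep the singular operator and excise a ball $B(0,\varepsilon)$, which buys you classical smoothness of $Dw_n$ on $\partial B(0,\varepsilon)$ but creates the spurious surface integral that the paper's route avoids entirely.

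That surface integral is where your argument, as written, has a real gap. You claim that because $r\mapsto\|\nabla q\|_{L^2(\partial B_r)}\|Dw_n\|_{L^2(\partial B_r)}$ is in $L^1(0,\varepsilon_0)$, the surface integrand must "vanish along a sequence $\varepsilon_m\to0$." That inference is false: a nonnegative $L^1$ function of $r$ need not have $\liminf_{r\to0}$ equal to zero (the constant function $1$ is integrable). The standard trick only yields $\liminf_{r\to0}\,r\,g(r)=0$ for $g\ge0$ integrable, which gives the surface term as $o(1/\varepsilon_m)$ --- not enough. The step can be repaired, but it needs an extra factor of $r$, and this is exactly what Hardy's inequality supplies: since $q\in H^1_0(\Omega)$ and (by Lemma 5.1) $Dw_n/|x|\in L^2(\Omega)$, one bounds
\begin{equation*}
\Bigl|\int_{\partial B_r}\bigl(Dw_n\,\partial_\nu q-q\,\partial_\nu Dw_n\bigr)\,d\sigma\Bigr|
\le \frac{r}{2}\Bigl(\bigl\|\tfrac{Dw_n}{|x|}\bigr\|^2_{L^2(\partial B_r)}+\|\nabla q\|^2_{L^2(\partial B_r)}
+\bigl\|\tfrac{q}{|x|}\bigr\|^2_{L^2(\partial B_r)}+\|\nabla Dw_n\|^2_{L^2(\partial B_r)}\Bigr),
\end{equation*}
where the bracket is integrable in $r$; now $\liminf_{r\to0}r\,h(r)=0$ does apply and kills the term along a sequence. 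With that correction your route closes; alternatively, adopting the paper's potential regularization avoids the issue altogether.
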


\begin{proof}
We use the definition of $B_{2}$ and for $v\in D(A)$ we calculate 
\begin{eqnarray}
&&\left\langle B_{2}u,v\right\rangle _{(D(A))^{\prime },D(A)}=\left\langle -%
\widetilde{A}\left( \sum\limits_{j=1}^{m}u_{j}D\alpha _{j}\right)
+a\sum\limits_{j=1}^{m}u_{j}D\alpha _{j},v\right\rangle _{(D(A))^{\prime
},D(A)}  \label{57} \\
&=&-\sum\limits_{j=1}^{m}\left( u_{j}D\alpha _{j},Av\right) _{L^{2}(\Omega
)}+\sum\limits_{j=1}^{m}\left( u_{j}D\alpha _{j},av\right) _{L^{2}(\Omega
)}=\sum\limits_{j=1}^{m}u_{j}\left( D\alpha _{j},-Av+av\right)
_{L^{2}(\Omega )}  \notag \\
&=&u\cdot (D\alpha ,-Av+av)_{L^{2}(\Omega )}=u\cdot (D\alpha
,-A_{0}v)_{L^{2}(\Omega )},  \notag
\end{eqnarray}%
where $(D\alpha ,A_{0}v)_{L^{2}(\Omega )}$ denotes the vector with the
components $(D\alpha _{j},-A_{0}v)_{L^{2}(\Omega )}$ for $v\in D(A).$ Here
we took into account that $-Av+av=-A_{0}v$ with $A_{0}$ defined in (\ref%
{47-2}). Hence, we can define the components of $B_{2}^{\ast
}:D(A)\rightarrow U^{\ast }=U=\mathbb{R}^{m}$ by%
\begin{equation}
(B_{2}^{\ast }v)_{j}=\left( D\alpha _{j},-A_{0}v\right) _{L^{2}(\Omega )},%
\mbox{ }v\in D(A),\mbox{ }j=1,...,m.  \label{57-1}
\end{equation}

For the computation of $(D\alpha _{j},-A_{0}v)_{L^{2}(\Omega )}$ let us
consider the generic systems%
\begin{equation}
\Delta D\beta +\frac{\lambda D\beta }{\left\vert x\right\vert ^{2}}=0,\mbox{ 
}D\beta =\beta \mbox{ on }\Gamma ,\mbox{ }\beta \in L^{2}\left( \Gamma
\right) ,  \label{58}
\end{equation}%
\begin{equation}
-\Delta v-\frac{\lambda v}{\left\vert x\right\vert ^{2}}=p,\mbox{ }v=0\mbox{
on }\Gamma ,\mbox{ }p\in L^{2}(\Omega ).  \label{58-1}
\end{equation}%
The second system has a unique solution $v\in H_{0}^{1}(\Omega ).$ In order
to make a rigorous calculus we assume first that $\beta \in H^{1}\left(
\Gamma \right) $ and $-A_{0}$ is replaced by for $\varepsilon >0$ by 
\begin{equation}
-A_{0,\varepsilon }=-\Delta -\frac{\lambda }{\left\vert x\right\vert
^{2}+\varepsilon },\mbox{ }D(A_{0,\varepsilon })=H^{2}(\Omega )\cap
H_{0}^{1}(\Omega ).  \label{58-2}
\end{equation}%
Thus, the equation $-A_{0,\varepsilon }v=p$ has a unique solution $%
v_{\varepsilon }\in H^{2}(\Omega )\cap H_{0}^{1}(\Omega )$ and all
operations below make sense. We multiply the approximating equation for $%
D\beta $ by the solution $v_{\varepsilon }.$ By applying the Green's formula
we obtain%
\begin{equation*}
\int_{\Omega }\left( D\beta \Delta v_{\varepsilon }+\frac{\lambda
v_{\varepsilon }D\alpha _{j}}{\left\vert x\right\vert ^{2}+\varepsilon }%
\right) dx+\int_{\Gamma }\left( v_{\varepsilon }\frac{\partial D\beta }{%
\partial \nu }-D\beta \frac{\partial v_{\varepsilon }}{\partial \nu }\right)
dx=0
\end{equation*}%
which implies, by using (\ref{58-1}) and the boundary condition for $D\alpha
_{j},$ that 
\begin{equation*}
-\int_{\Omega }pD\beta dx=\int_{\Gamma }\beta \frac{\partial v_{\varepsilon }%
}{\partial \nu }d\sigma ,\mbox{ }\forall \beta \in L^{2}(\Gamma ).
\end{equation*}%
Therefore, we have for each $p\in L^{2}(\Omega )$ 
\begin{equation*}
(D\beta ,p)_{L^{2}(\Omega )}=\left( \beta ,\frac{\partial }{\partial \nu }%
(A_{0,\varepsilon }^{-1}p)\right) _{L^{2}(\Gamma )}\mbox{ }\forall \beta \in
H^{1}\left( \Gamma \right) ,
\end{equation*}%
which can be written also as%
\begin{equation*}
(D\beta ,-A_{0,\varepsilon }v_{\varepsilon })_{L^{2}(\Omega )}=-\left( \beta
,\frac{\partial v_{\varepsilon }}{\partial \nu }\right) _{L^{2}(\Gamma )}%
\mbox{ for }\beta \in H^{1}\left( \Gamma \right) ,\mbox{ }v_{\varepsilon
}\in D(A_{0,\varepsilon }).
\end{equation*}%
These remain true at limit as $\varepsilon \rightarrow 0,$ hence%
\begin{equation}
(D\beta ,p)_{L^{2}(\Omega )}=\left( \beta ,\frac{\partial }{\partial \nu }%
(A_{0}^{-1}p)\right) _{L^{2}(\Gamma )}\mbox{ for }\beta \in H^{1}\left(
\Gamma \right) ,\mbox{ }p\in L^{2}(\Omega ),  \label{58-0}
\end{equation}%
\begin{equation}
(D\beta ,-A_{0}v)_{L^{2}(\Omega )}=-\left( \beta ,\frac{\partial v}{\partial
\nu }\right) _{L^{2}(\Gamma )}\mbox{ for }\beta \in H^{1}\left( \Gamma
\right) ,\mbox{ }v\in D(A_{0})=D(A)  \label{59}
\end{equation}%
and the latter makes sense since $\frac{\partial v}{\partial \nu }\in
H^{-1/2}(\Gamma ).$ We note that both $A_{0,\varepsilon }$ and $A_{0}$ are
surjective, because they are $m$-accretive and coercive. Then, by (\ref{58-0}%
) we can define $D^{\ast }:L^{2}(\Omega )\rightarrow L^{2}(\Gamma ),$ by (%
\ref{59-1}).

Going back to (\ref{57-1}) and using (\ref{59}) in which we set $\beta
:=\alpha _{j}$ it turns out that we can define $B_{2}^{\ast
}:D(A)\rightarrow U$ by 
\begin{equation}
(B_{2}^{\ast }v)_{j}=(D\alpha _{j},-A_{0}v)_{L^{2}(\Gamma )}=-\left( \alpha
_{j},\frac{\partial v}{\partial \nu }\right) _{L^{2}(\Gamma )}\mbox{, for }%
v\in D(A).  \label{59-0}
\end{equation}%
It remains to show that $\frac{\partial v}{\partial \nu }$ belongs to $%
L^{2}(\Gamma )$ if $v\in D(A).$ Indeed, there exists $(v_{\varepsilon
})_{\varepsilon }\subset H^{2}(\Omega )\cap D(A)$ such that $v_{\varepsilon
}\rightarrow v$ strongly in $D(A)$, $\frac{\partial v_{\varepsilon }}{%
\partial \nu }\rightarrow \frac{\partial v}{\partial \nu }$ strongly in $%
H^{-1/2}(\Gamma )$ as $\varepsilon \rightarrow 0$ and 
\begin{equation}
(B_{2}^{\ast }v_{\varepsilon })_{j}=-\left( \alpha _{j},\frac{\partial
v_{\varepsilon }}{\partial \nu }\right) _{L^{2}(\Gamma )}.  \label{60-00}
\end{equation}%
We recall that $0\in \Omega $. We consider $\varphi \in C^{4}(\overline{%
\Omega })$ defined by 
\begin{equation*}
\varphi (x)=\left\{ 
\begin{array}{l}
0,\mbox{ if }x\in \Omega _{\delta } \\ 
1,\mbox{ if }x\in \Omega \backslash \Omega _{2\delta }%
\end{array}%
\right.
\end{equation*}%
where $\delta >0$ is such that $\Omega _{\delta }=\{x\in \Omega ;$ $%
\left\Vert x\right\Vert <\delta \}$ and $0\in \Omega _{\delta }.$ The
function $\varphi v_{\varepsilon }\in H^{2}(\Omega \backslash \Omega
_{2\delta }).$ Indeed, since $v_{\varepsilon }\in H^{2}(\Omega )$ it follows
that there exists $f\in L^{2}(\Omega )$ such that $f=Av_{\varepsilon }$ and
so $\Delta v_{\varepsilon }=f-\frac{\lambda v_{\varepsilon }}{\left\vert
x\right\vert ^{2}}\in L^{2}(\Omega \backslash \Omega _{2\delta }).$ We have%
\begin{equation*}
\Delta (\varphi v_{\varepsilon })=\varphi \Delta v_{\varepsilon }+2\nabla
\varphi \cdot \nabla v_{\varepsilon }+v_{\varepsilon }\Delta \varphi \in
L^{2}(\Omega ).
\end{equation*}%
This together with the boundary condition $\varphi v_{\varepsilon }=0$ on $%
\Gamma $ implies that $\varphi v_{\varepsilon }\in H^{2}(\Omega \backslash
\Omega _{2\delta })$ and so $v_{\varepsilon }\in H^{2}(\Omega \backslash
\Omega _{2\delta }),$ too, because $\varphi =1$ on $\Omega \backslash \Omega
_{2\delta }.$ Consequently, $\frac{\partial v_{\varepsilon }}{\partial \nu }%
\in H^{1/2}(\Gamma )\subset L^{2}(\Gamma ).$ This is preserved by density
nearby the boundary. Finally, (\ref{59-0}) remains true by density for $%
\alpha _{j}\in L^{2}(\Gamma )$ and so this implies (\ref{60}).
\end{proof}

\medskip

Now, we pass to the proof of $(i_{2}).$ Such a result is proved for the
Laplace operator in \cite{vbp}, p. 320, Proposition 4.39, but here we give a
complete different proof under our hypotheses.

To this end, we recall that $Ay=A_{0}y+ay$ with $A_{0}$ defined in (\ref%
{47-2}) and consider the problem 
\begin{equation}
\frac{dy}{dt}(t)+B_{0}y(t)-ay=0,\mbox{ in }(0,T)\times \Omega ,\mbox{ }%
y(0)=y_{0}\in L^{2}(\Omega )  \label{60-0}
\end{equation}%
where 
\begin{equation}
B_{0}=-A_{0},\mbox{ }B_{0}=-\Delta -\frac{\lambda }{\left\vert x\right\vert
^{2}},\mbox{ }B_{0}:D(B_{0})=D(A_{0})\rightarrow L^{2}(\Omega ).
\label{60-B0}
\end{equation}%
The operator $B_{0}$ is $m$-accretive, $B_{0}=B_{0}^{\ast }$ and $B_{0}-aI$
is $\omega $-$m$-accretive. The unique solution to problem (\ref{60-0}) has
also the property $y(t)\in D(A)=D(A_{0})$ a.e. $t\in (0,T)$ by the
regularizing effect (see \cite{VB-book-2010}, p. 158 Theorem 4.11).

First, we determine two estimates. We multiply equation (\ref{60-0}) first
by $y(t)$ and integrate over $(0,t).$ We obtain, using Gronwall's lemma 
\begin{equation}
\left\Vert y(t)\right\Vert
_{2}^{2}+\int_{0}^{t}(B_{0}y(s),y(s))_{2}ds=C_{T}\left\Vert y_{0}\right\Vert
_{2}^{2},\mbox{ }\forall t\in \lbrack 0,T].  \label{60-1}
\end{equation}%
Then, we multiply (\ref{60-0}) by $tB_{0}y(t)$ which yields%
\begin{equation}
\frac{1}{2}\frac{d}{dt}\left( tB_{0}y(t),y(t)\right) _{2}+t\left\Vert
B_{0}y(t)\right\Vert _{2}^{2}=\frac{1}{2}\left( B_{0}y(t),y(t)\right)
_{2}+(ay(t),B_{0}y(t))_{2}.  \label{60-2}
\end{equation}%
We integrate this and by (\ref{60-1}) we get 
\begin{equation}
t(B_{0}y(t),y(t))_{2}+\int_{0}^{t}s\left\Vert B_{0}y(s)\right\Vert
_{2}^{2}ds\leq C\int_{0}^{t}(B_{0}y(s),y(s))_{2}ds\leq C_{T}\left\Vert
y_{0}\right\Vert _{2}^{2}.  \label{60-3}
\end{equation}%
To prove $(i_{2})$ we have to estimate 
\begin{eqnarray}
\left\Vert B_{2}^{\ast }e^{At}y_{0}\right\Vert _{\mathbb{R}^{m}}
&=&\left\Vert B_{2}^{\ast }y(t)\right\Vert _{\mathbb{R}^{m}}=\left\Vert
\left( -\left( \alpha _{j},\frac{\partial y(t)}{\partial \nu }\right)
_{L^{2}(\Gamma )}\right) _{j=1}^{m}\right\Vert _{\mathbb{R}^{m}}
\label{60-01} \\
&\leq &\sum\limits_{j=1}^{m}\left\Vert \alpha _{j}\right\Vert _{L^{2}(\Gamma
)}\left\Vert \frac{\partial y(t)}{\partial \nu }\right\Vert _{L^{2}(\Gamma
)},  \notag
\end{eqnarray}%
thus, actually we have to estimate $\left\Vert \frac{\partial y(t)}{\partial
\nu }\right\Vert _{L^{2}(\Gamma )}$ for $t>0.$ Since we shall relate this to
the fractional powers of the operator $B_{0},$ for a rigorous computation
involving its fractional powers we shall rely again on the approximation, $%
B_{0,\varepsilon }=-A_{0,\varepsilon },$ see (\ref{58-2}). We proceed with
all calculations for the approximating equation (\ref{60-0}) with $%
B_{0,\varepsilon }$ instead of $B_{0}$ and pass to the limit at the end.
Thus, $D(B_{0,\varepsilon })=H^{2}(\Omega )\cap H_{0}^{1}(\Omega ),$ $%
B_{0,\varepsilon }:D(B_{0,\varepsilon })\subset L^{2}(\Omega )\rightarrow
L^{2}(\Omega )$ and it is $m$-accretive and self-adjoint.

Therefore, we recall that the fractional powers are defined by $%
B_{0,\varepsilon }^{s}:D(B_{0,\varepsilon }^{s})\subset L^{2}(\Omega
)\rightarrow L^{2}(\Omega )$, $s\geq 0,$ see \cite{Pazy}$.$ Then, $%
D(B_{0,\varepsilon }^{s})\subset H^{2s}(\Omega )$ with equality iff $2s<3/2,$
see e.g., \cite{Fujiwara}. We have the interpolation inequality%
\begin{equation}
\left\Vert B_{0,\varepsilon }^{s}w\right\Vert _{2}\leq C\left\Vert
B_{0,\varepsilon }^{s_{1}}w\right\Vert _{2}^{\lambda }\left\Vert
B_{0,\varepsilon }^{s_{2}}w\right\Vert _{2}^{1-\lambda },\mbox{ for }%
s=\lambda s_{1}+(1-\lambda )s_{2},  \label{300}
\end{equation}%
and the relations%
\begin{equation}
\left\Vert B_{0,\varepsilon }^{s}w\right\Vert _{2}\leq C\left\Vert
B_{0,\varepsilon }^{s_{1}}w\right\Vert _{2}\mbox{ if }s<s_{1},  \label{301}
\end{equation}%
\begin{equation}
\left\Vert B_{0,\varepsilon }^{s}w\right\Vert _{H^{m}(\Omega )}\leq
C\left\Vert B_{0,\varepsilon }^{s+m/2}w\right\Vert _{2}.  \label{302}
\end{equation}

Now, we come back to $\frac{\partial y}{\partial \nu }(t)$ and using the
trace theorem and (\ref{302}) applied to $B_{0,\varepsilon }$ we write for
the approximating solution 
\begin{equation}
\left\Vert \frac{\partial y_{\varepsilon }}{\partial \nu }(t)\right\Vert
_{L^{2}(\Gamma )}\leq C\left\Vert y_{\varepsilon }(t)\right\Vert
_{H^{3/2}(\Omega )}\leq C\left\Vert B_{0,\varepsilon }^{3/4}y(t)\right\Vert
_{L^{2}(\Omega )},  \label{60-02}
\end{equation}%
so that we must estimate $\left\Vert B_{0,\varepsilon }^{3/4}y(t)\right\Vert
_{H}.$

Next, we use (\ref{300}) and write 
\begin{equation}
\left\Vert B_{0,\varepsilon }^{3/4}y_{\varepsilon }(t)\right\Vert _{2}\leq
C\left\Vert B_{0,\varepsilon }y_{\varepsilon }(t)\right\Vert
_{2}^{3/4}\left\Vert y_{\varepsilon }(t)\right\Vert _{2}^{1/4}.  \label{60-4}
\end{equation}%
Further, we calculate via H\"{o}lder's inequality%
\begin{eqnarray}
&&\int_{0}^{t}\left\Vert B_{0,\varepsilon }y_{\varepsilon }(s)\right\Vert
_{2}^{3/4}ds=\int_{0}^{t}s^{p}\left\Vert B_{0,\varepsilon }y_{\varepsilon
}(s)\right\Vert _{2}^{3/4}s^{-p}ds  \label{60-5} \\
&\leq &\left( \int_{0}^{t}s^{8p/3}\left\Vert B_{0,\varepsilon
}y_{\varepsilon }(t)\right\Vert _{2}^{2}ds\right) ^{3/8}\left(
\int_{0}^{t}s^{-8p/5}ds\right) ^{5/8}  \notag \\
&=&\left( \int_{0}^{t}s\left\Vert B_{0,\varepsilon }y_{\varepsilon
}(s)\right\Vert _{2}^{2}ds\right) ^{3/8}\left( \int_{0}^{t}s^{-3/5}ds\right)
^{5/8}  \notag \\
&\leq &C\left( \int_{0}^{t}s\left\Vert B_{0,\varepsilon }y_{\varepsilon
}(s)\right\Vert _{2}^{2}ds\right) ^{3/8}\left( t^{2/5}\right) ^{5/8},  \notag
\end{eqnarray}%
where we chose $p=\frac{3}{8}.$ This together with (\ref{60-01}), (\ref%
{60-02}), (\ref{60-4}) and (\ref{60-1}) implies%
\begin{eqnarray}
&&\int_{0}^{t}\left\Vert B_{2,\varepsilon }^{\ast }e^{As}y_{0}\right\Vert _{%
\mathbb{R}^{m}}ds\leq C\int_{0}^{t}\left\Vert \frac{\partial y_{\varepsilon }%
}{\partial \nu }(s)\right\Vert _{L^{2}(\Gamma )}ds\leq
C\int_{0}^{t}\left\Vert B_{0,\varepsilon }y_{\varepsilon }(s)\right\Vert
_{L^{2}(\Omega )}^{3/4}ds  \label{60-7} \\
&\leq &C\int_{0}^{t}\left\Vert B_{0,\varepsilon }y_{\varepsilon
}(s)\right\Vert _{2}^{3/4}\left\Vert y_{\varepsilon }(s)\right\Vert
_{2}^{1/4}ds\leq C_{T}\left\Vert y_{0}\right\Vert
_{2}^{1/4}\int_{0}^{t}\left\Vert B_{0,\varepsilon }y_{\varepsilon
}(s)\right\Vert _{2}^{3/4}ds  \notag \\
&\leq &C_{T}\left\Vert y_{0}\right\Vert _{2}^{1/4}\left\Vert
y_{0}\right\Vert _{2}^{3/4}\left( t^{2/5}\right) ^{5/8}\leq C_{T}\left\Vert
y_{0}\right\Vert _{2},\mbox{ }\forall t\in \lbrack 0,T].  \notag
\end{eqnarray}%
Passing to the limit by recalling (\ref{59}) we get $(i_{2})$ as claimed.

This hypothesis has also an important consequence. We note that (\ref{56})
with the initial condition $y(0)=y_{0}\in L^{2}(\Omega )$ has a unique
solution $y\in C([0,T];(D(A))^{\prime }),$%
\begin{equation}
y(t)=e^{At}y_{0}+\int_{0}^{t}e^{A(t-s)}(B_{1}w(s)+B_{2}u(s))ds,\mbox{ }t\in
\lbrack 0,\infty ).  \label{60-6}
\end{equation}
We are going to show first that $(i_{2})$ ensures in addition that $y\in
L^{2}(0,T;L^{2}(\Omega )).$

Actually, we shall prove the following assertion: if (\ref{12}) takes place
then the solution $y$ to (\ref{56}) belongs to $L^{2}(0,T;L^{2}(\Omega ))$
if $u\in L^{2}(0,T;U).$ Since in (\ref{60-6}) the sum between the first and
the last term corresponding to the contribution of $w$ is already in $%
C([0,T];L^{2}(\Omega ))$ we focus only on the term $Y(t):=%
\int_{0}^{t}e^{A(t-s)}B_{2}u(s)ds$ and show as in (\ref{16-0}) that $%
\left\Vert Y\right\Vert _{L^{2}(0,T;L^{2}(\Omega ))}\leq C\left\Vert
u\right\Vert _{L^{2}(0,T;U)}.$ In conclusion, equation (\ref{56}) with the
initial condition $y_{0}\in L^{2}(\Omega )$ has a mild solution $y\in
L^{2}(0,T;L^{2}(\Omega ))$.

\medskip

$(i_{3})$ The first part of hypothesis $(i_{3}),$ that is the detectability
of the pair $(A,C_{1})$ follows as in Lemma 4.2. Now we prove (\ref{12-0}).
We recall that 
\begin{equation*}
A_{1}y=A_{0}y+a_{0}\chi _{\Omega _{0}}(x)y-k\chi _{\Omega _{C}}(x)y
\end{equation*}%
with $A_{0}$ defined in (\ref{47-2}) and consider the problem 
\begin{equation}
\frac{dy}{dt}(t)+B_{0}y(t)=a_{0}\chi _{\Omega _{0}}(x)y-k\chi _{\Omega
_{C}}(x)y,\mbox{ in }(0,T)\times \Omega ,\mbox{ }y(0)=y_{0}\in L^{2}(\Omega )
\label{61}
\end{equation}%
where $B_{0}=-A_{0}$ is $m$-accretive, $B_{0}=B_{0}^{\ast }$ and $A_{1}$ is $%
m$-accretive. Then, problem (\ref{61}) has a unique solution $%
y(t)=S_{1}(t)y_{0},$ where $S_{1}(t)$ is the $C_{0}$-semigroup generated by $%
A_{1}$. The solution $y\in L^{2}(0,T;H_{0}^{1}(\Omega ))$ and $y(t)\in D(A)$
a.e. $t\in (0,T)$.

Since $A_{1}=A+KC_{1}$ generates an exponentially stable semigroup we have 
\begin{equation}
\left\Vert y(t)\right\Vert _{2}\leq e^{-\alpha t}\left\Vert y_{0}\right\Vert
_{2},\mbox{ }\alpha =k-a_{0}.  \label{62}
\end{equation}%
Moreover, $S_{1}(t)$ is analytic and so 
\begin{equation}
\left\Vert A_{1}y(t)\right\Vert _{2}\leq \frac{C_{T}}{t}\left\Vert
y(t)\right\Vert _{2},\mbox{ }\forall t\in (0,T).  \label{63-0}
\end{equation}%
Since $\left\Vert B_{0}y\right\Vert _{H}\leq \left\Vert A_{1}y\right\Vert
_{H}+C\left\Vert y\right\Vert _{H}$ it follows that 
\begin{equation}
\left\Vert B_{0}y(t)\right\Vert _{2}\leq \frac{C_{T}}{t}\left\Vert
y(t)\right\Vert _{2},\mbox{ }\forall t\in (0,T).  \label{63}
\end{equation}%
The previous calculations for proving point $(i_{2})$ hold here too, and by (%
\ref{60-7}) we have 
\begin{eqnarray}
&&\left\Vert B_{2}^{\ast }e^{(A+KC_{1})t}y_{0}\right\Vert _{\mathbb{R}%
^{m}}=\left\Vert B_{2}^{\ast }y(t)\right\Vert _{\mathbb{R}^{m}}=\left\Vert
\left( -\left( \alpha _{j},\frac{\partial y(t)}{\partial \nu }\right)
_{L^{2}(\Gamma )}\right) _{j=1}^{m}\right\Vert _{\mathbb{R}^{m}}  \label{64}
\\
&\leq &\sum\limits_{j=1}^{m}\left\Vert \alpha _{j}\right\Vert _{L^{2}(\Gamma
)}\left\Vert \frac{\partial y(t)}{\partial \nu }\right\Vert _{L^{2}(\Gamma
)}\leq C\left\Vert B_{0}y(t)\right\Vert _{2}^{3/4}\left\Vert
y_{0}\right\Vert _{2}^{1/4}e^{-\alpha t/4},  \notag
\end{eqnarray}%
where $y(t)=S_{1}(t)y_{0}$ is the solution to (\ref{61}). Thus, 
\begin{equation}
\int_{0}^{T}\left\Vert B_{2}^{\ast }e^{(A+KC_{1})t}y_{0}\right\Vert _{%
\mathbb{R}^{m}}dt\leq C_{T}\left\Vert y_{0}\right\Vert _{2},\mbox{ for }%
T\geq 0.  \label{64-0}
\end{equation}%
On the other hand, for $t>T$ we have 
\begin{equation*}
\left\Vert A_{1}y(t)\right\Vert _{2}=\left\Vert
A_{1}S_{1}(T)S_{1}(t-T)y(t)\right\Vert _{2}\leq \frac{C_{T}}{T}\left\Vert
S_{1}(t-T)y(t)\right\Vert _{2}\leq \frac{C_{T}}{T}e^{-\alpha
(t-T)}\left\Vert y_{0}\right\Vert _{2}.
\end{equation*}%
Then we calculate 
\begin{eqnarray*}
&&\left\Vert B_{0}y(t)\right\Vert _{H}^{3/4}\leq \left( \left\Vert
A_{1}y(t)\right\Vert _{H}+C\left\Vert y(t)\right\Vert _{H}\right) ^{3/4}\leq
C\left\Vert A_{1}y(t)\right\Vert _{H}^{3/4}+C\left\Vert y(t)\right\Vert
_{H}^{3/4} \\
&\leq &\frac{C_{T}}{T^{3/4}}e^{-3\alpha (t-T)/4}\left\Vert y_{0}\right\Vert
_{2}^{3/4}+C\left\Vert y_{0}\right\Vert _{2}^{3/4},
\end{eqnarray*}%
hence, by (\ref{64})%
\begin{eqnarray}
\left\Vert B_{2}^{\ast }e^{(A^{\ast }+KC_{1})t}y_{0}\right\Vert _{U} &\leq
&\left( \frac{C_{T}}{T^{3/4}}e^{-3\alpha (t-T)/4}+1\right) \left\Vert
y_{0}\right\Vert _{2}^{3/4}\left\Vert y_{0}\right\Vert _{2}^{1/4}e^{-\alpha
t/4}  \label{64-2} \\
&=&\left( \frac{C_{T}}{T^{3/4}}e^{-3\alpha (t-T)/4}+e^{-\alpha t/4}\right)
\left\Vert y_{0}\right\Vert _{2},\mbox{ for }t>T.  \notag
\end{eqnarray}%
In particular, let $T=1$ and by (\ref{64-0}) and (\ref{64-2}) we finally get 
\begin{eqnarray}
&&\int_{0}^{\infty }\left\Vert B_{2}^{\ast }e^{(A^{\ast
}+KC_{1})t}y_{0}\right\Vert _{U}dt  \label{64-1} \\
&=&\int_{0}^{1}\left\Vert B_{2}^{\ast }e^{(A^{\ast
}+KC_{1})t}y_{0}\right\Vert _{U}dt+\int_{1}^{\infty }\left\Vert B_{2}^{\ast
}e^{(A^{\ast }+KC_{1})t}y_{0}\right\Vert _{U}dt  \notag \\
&\leq &C_{1}\left\Vert y_{0}\right\Vert _{2}+\left\Vert y_{0}\right\Vert
_{2}\int_{1}^{\infty }\left( C_{1}e^{-3\alpha (t-1)/4}+e^{-\alpha
t/4}\right) dt\leq C\left\Vert y_{0}\right\Vert _{2}\mbox{,}  \notag
\end{eqnarray}%
for all $y_{0}\in L^{2}(\Omega )$. In conclusion, we have obtained (\ref%
{12-0}) as claimed.

\medskip

$(i_{4})$ The adjoint of $D_{1}$ is $D_{1}^{\ast }:L^{2}(\Omega )\rightarrow 
\mathbb{R}^{m}$ 
\begin{equation*}
D_{1}^{\ast }v=\left( \int_{\Omega }d_{1}(x)v(x)dx,...,\int_{\Omega
}d_{m}(x)v(x)dx\right) .
\end{equation*}%
Then, by (\ref{44-4}), $\left\Vert D_{1}u\right\Vert _{L^{2}(\Omega
)}^{2}=\int_{\Omega }\left( \sum\limits_{j=1}^{m}d_{j}(x)\right) ^{2}dx=1,$
and $\int_{\Omega }d_{j}(x)\chi _{\Omega _{C}}(x)ydx=0,$ hence $D_{1}^{\ast
}C_{1}y(\xi )=0.$

\medskip

Then, calculating the operators in (\ref{15}) we see that formulae (\ref%
{36-0}), (\ref{36-2})-(\ref{36-3}) are the same and using (\ref{59-0}) we
get 
\begin{equation*}
PB_{2}B_{2}^{\ast }P\varphi (x)=\int_{\Omega }\varphi (\xi )\left(
\sum\limits_{j=1}^{m}A_{j}(\xi )A_{j}(x)\right) d\xi
\end{equation*}%
where 
\begin{equation*}
A_{j}(\xi )=\int_{\Gamma }\alpha _{j}(\sigma )\frac{\partial P_{0}}{\partial
\nu _{\sigma }}(\sigma ,\xi )d\sigma .
\end{equation*}%
Proceedings with all calculations as in Section \ref{Distributed} we have

\begin{theorem}
Let $\gamma >0$ and let $A,$ $B_{1},$ $C_{1}$ and $D_{1}$ be given by (\ref%
{45-1}) and (\ref{44-2}), respectively and $B_{2},$ $B_{2}^{\ast }$ be given
by (\ref{55}) and (\ref{60}). Assume that $P_{0}\in D(A)\times D(A)$ is a
solution to equation 
\begin{eqnarray}
&&\Delta _{x}P_{0}(x,\xi )+\Delta _{\xi }P_{0}(x,\xi )+\lambda P_{0}(x,\xi
)\left( \frac{1}{|x|^{2}}+\frac{1}{|\xi |^{2}}\right) +(a(x)+a(\xi
))P_{0}(x,\xi )  \notag \\
&&-\sum\limits_{j=1}^{m}A_{j}(x)A_{j}(\xi )+\gamma ^{-2}\int_{\Omega }\chi
_{\omega _{1}}(\overline{\xi })P_{0}(x,\overline{\xi })P_{0}(\overline{\xi }%
,\xi )d\overline{\xi }  \label{64-4} \\
&=&-\delta (x-\xi )\chi _{\Omega _{C}}(\xi ),\mbox{ in }\mathcal{D}^{\prime
}(\Omega \times \Omega ),  \notag
\end{eqnarray}%
with conditions (\ref{38})-(\ref{40}). Then, the feedback control $%
\widetilde{F}\in L(L^{2}(\Omega ),\mathbb{R}^{m}),$ 
\begin{equation}
(\widetilde{F}y)_{j}=\int_{\Omega }y(\xi )\left( \alpha _{j},\frac{\partial
P_{0}}{\partial \nu }(\cdot ,\xi )\right) _{L^{2}(\Gamma )}d\xi ,\mbox{ }%
j=1,...m,\mbox{ }\forall y\in L^{2}(\Omega )  \label{64-5}
\end{equation}%
solves the $H^{\infty }$-problem.
\end{theorem}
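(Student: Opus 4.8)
\noindent The plan is to reduce the statement to the converse part of Theorem \ref{Th-main}. Hypotheses $(i_{1})$--$(i_{4})$ have already been verified for system (\ref{41})--(\ref{44}): $A$ in (\ref{45-1}) generates an analytic compact $C_{0}$-semigroup by Lemma 4.1; $B_{1},C_{1}\in L(L^{2}(\Omega),L^{2}(\Omega))$ and $B_{2}$ is defined by (\ref{55}) with adjoint (\ref{60}); $(i_{2})$ is (\ref{60-7}); the detectability of $(A,C_{1})$ and the bound (\ref{12-0}) are (\ref{64-1}); and $(i_{4})$ follows from (\ref{44-4}). Hence it suffices to manufacture, out of the kernel $P_{0}$, a bounded operator $P$ satisfying (\ref{14-2}) and solving the operatorial Riccati equation (\ref{15}), and to check that the operators $\Lambda_{P},\Lambda_{P}^{1}$ of (\ref{15-prim}) generate exponentially stable semigroups; Theorem \ref{Th-main} then gives $\|G_{\widetilde{F}}\|<\gamma$ for $\widetilde{F}=-B_{2}^{\ast}P$, and it will only remain to identify this operator with (\ref{64-5}).

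\noindent First I would set $P\varphi(x)=\int_{\Omega}P_{0}(x,\xi)\,\varphi(\xi)\,d\xi$. Since $P_{0}\in D(A)\times D(A)\subset L^{2}(\Omega\times\Omega)$ and $A$ acts in each variable, $P$ and $AP$ are Hilbert--Schmidt, so $P\in L(L^{2}(\Omega),L^{2}(\Omega))\cap L(L^{2}(\Omega),D(A^{\ast}))$ (recall $A=A^{\ast}$), whence $B_{2}^{\ast}P\in L(L^{2}(\Omega),\mathbb{R}^{m})$ by (\ref{60}); the symmetry condition (\ref{39}) gives $P=P^{\ast}$ and (\ref{40}) gives $P\geq 0$, so (\ref{14-2}) holds. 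To see that $P$ solves (\ref{15}), I would test (\ref{15}) against $\varphi\in C_{0}^{\infty}(\Omega)$ and substitute the kernel formulas already recorded: (\ref{36-0}) for the $B_{1}B_{1}^{\ast}$-terms, the identity $PB_{2}B_{2}^{\ast}P\varphi(x)=\int_{\Omega}\varphi(\xi)\sum_{j=1}^{m}A_{j}(x)A_{j}(\xi)\,d\xi$ with $A_{j}(\xi)=\int_{\Gamma}\alpha_{j}(\sigma)\frac{\partial P_{0}}{\partial\nu_{\sigma}}(\sigma,\xi)\,d\sigma$ coming from (\ref{59-0}), the expressions (\ref{36-2})--(\ref{36-3}) for $A^{\ast}P$ and $PA$, and $C_{1}^{\ast}C_{1}\varphi(x)=\chi_{\Omega_{C}}(x)\varphi(x)=\int_{\Omega}\delta(x-\xi)\chi_{\Omega_{C}}(\xi)\varphi(\xi)\,d\xi$. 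Collecting the terms shows that (\ref{15}), read as an identity of distributions in $\xi$, is precisely (\ref{64-4}), while the membership $P_{0}(\cdot,\xi)\in D(A)$ encodes the Dirichlet condition (\ref{38}); thus a solution $P_{0}$ of (\ref{64-4}), (\ref{38}) with (\ref{39})--(\ref{40}) is exactly an operator solution $P$ of (\ref{15}) with (\ref{14-2}).

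\noindent Next I would prove exponential stability of $\Lambda_{P}$ and $\Lambda_{P}^{1}$, with the domains described in Lemma \ref{operators}, following the argument in the proof of Theorem \ref{Th-main}. Setting $y^{\ast}(t)=e^{\Lambda_{P}t}y_{0}$ and $p(t)=-Py^{\ast}(t)$, one has $p\in L^{2}(\mathbb{R}_{+};L^{2}(\Omega))$ and $p$ solves (\ref{217}), with the regularity established in the proof of Theorem \ref{Th-main}; the energy identity attached to (\ref{15}) together with $P=P^{\ast}\geq 0$ forces $C_{1}y^{\ast}$ and $B_{2}^{\ast}Py^{\ast}$ into $L^{2}(\mathbb{R}_{+};L^{2}(\Omega))$, after which the detectability bound $(i_{3})$ proved in (\ref{64-1}) — used exactly as in the proof of (\ref{225-1}) — places the convolution term, hence $e^{\Lambda_{P}t}y_{0}$ itself, in $L^{2}(\mathbb{R}_{+};L^{2}(\Omega))$; Datko's theorem gives exponential stability of $e^{\Lambda_{P}t}$. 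Since $\Lambda_{P}^{1}=\Lambda_{P}-\gamma^{-2}B_{1}B_{1}^{\ast}P$ differs from $\Lambda_{P}$ by the bounded operator $\gamma^{-2}B_{1}B_{1}^{\ast}P$, the mild representation of $y^{\ast}$ through $e^{\Lambda_{P}^{1}t}$ gives $e^{\Lambda_{P}^{1}t}y_{0}\in L^{2}(\mathbb{R}_{+};L^{2}(\Omega))$ as well. With (\ref{14-2}), (\ref{15}) and both stabilities in hand, the converse part of Theorem \ref{Th-main} applies and $\widetilde{F}=-B_{2}^{\ast}P$ solves the $H^{\infty}$-problem. Finally, by (\ref{60}), $(\widetilde{F}y)_{j}=-(B_{2}^{\ast}Py)_{j}=\left(\alpha_{j},\frac{\partial (Py)}{\partial\nu}\right)_{L^{2}(\Gamma)}=\int_{\Omega}y(\xi)\left(\alpha_{j},\frac{\partial P_{0}}{\partial\nu}(\cdot,\xi)\right)_{L^{2}(\Gamma)}\,d\xi$, which is (\ref{64-5}); the normal traces lie in $L^{2}(\Gamma)$ by the $H^{2}$-regularity of $D(A)$ away from the origin used before (\ref{60-00}), so indeed $\widetilde{F}\in L(L^{2}(\Omega),\mathbb{R}^{m})$.

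\noindent The main obstacle is the rigorous passage between the operatorial equation (\ref{15}) and the distributional kernel equation (\ref{64-4}): one must justify differentiating under the integral in (\ref{36-2})--(\ref{36-3}) and, above all, verify that $B_{2}^{\ast}P$ — a priori meaningful only on $D(A)$ through the normal-trace formula (\ref{60}) — extends to a bounded operator on all of $L^{2}(\Omega)$, which hinges on the measurability and $L^{2}(\Gamma)$-integrability of $\xi\mapsto\frac{\partial P_{0}}{\partial\nu}(\cdot,\xi)$, itself a consequence of the interior-to-boundary $H^{2}$-regularity of $D(A)$ exploited in the proof of $(i_{2})$. A secondary subtlety is that the operator inequality $P\geq 0$, needed for the energy estimate above, must be read off from condition (\ref{40}) rather than from pointwise nonnegativity of the kernel alone; this is the intended reading of (\ref{40}).
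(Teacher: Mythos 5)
Your proposal is correct and follows essentially the same route as the paper, which proves this theorem by computing the kernel form of $PB_{2}B_{2}^{\ast}P$ via (\ref{59-0}) and then invoking the converse part of Theorem \ref{Th-main} with the operator $P$ induced by the kernel $P_{0}$, the hypotheses $(i_{1})$--$(i_{4})$ having already been verified in Section \ref{Boundary}. You supply more detail than the paper's one-line reduction (the Hilbert--Schmidt bound for $P$, the testing argument identifying (\ref{15}) with (\ref{64-4}), and the stability of $\Lambda_{P}$, $\Lambda_{P}^{1}$ via detectability), and your sign bookkeeping in passing from $\widetilde{F}=-B_{2}^{\ast}P$ through (\ref{60}) to (\ref{64-5}) is right.
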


In this case, by (\ref{54}), (\ref{55}) and (\ref{57-1}) we have%
\begin{equation*}
\Lambda _{P}y=A_{0}\left( y+\int_{\Omega }y(\xi )\sum\limits_{j=1}^{m}\left(
\int_{\Gamma }\alpha _{j}(\sigma )\frac{\partial P_{0}}{\partial \nu
_{\sigma }}(\sigma ,\xi )d\sigma \right) D\alpha _{j}d\xi \right) +ay+\chi
_{\omega _{1}}\int_{\Omega }P_{0}(x,\xi )y(\xi )d\xi
\end{equation*}%
and we see that 
\begin{equation*}
D(\Lambda _{P})=\left\{ y\in H;\mbox{ }y+\int_{\Omega }y(\xi
)\sum\limits_{j=1}^{m}\left( \int_{\Gamma }\alpha _{j}(\sigma )\frac{%
\partial P_{0}}{\partial \nu _{\sigma }}(\sigma ,\xi )d\sigma \right)
D\alpha _{j}d\xi \in D(A)\right\} .
\end{equation*}%
Moreover, $\Lambda _{P}$ is closed because if $y_{n}\rightarrow y$ in $H$,
since $A_{0}$ is closed we see that $\Lambda _{P}y_{n}\rightarrow \Lambda
_{P}y$ in $H.$ Then, by Lemma \ref{operators} we deduce that $\mathcal{X}%
=D(\Lambda _{P}).$

\section{Dirichlet boundary control in an $1D$ domain with a boundary
singularity\label{1D}}

\setcounter{equation}{0}

We briefly discuss here the $H^{\infty }$-boundary control problem for an
one-dimensional parabolic equation with the singularity on the boundary.
Namely, let $\Omega =(0,1)$ and consider the system%
\begin{eqnarray}
&&\left. y_{t}-\Delta y-\frac{\lambda y}{|x|^{2}}-a(x)y=B_{1}w,\right.
\left. \mbox{in }(0,\infty )\times \Omega ,\right.  \label{70} \\
&&\left. y(t,0)=0,\mbox{ }y(t,1)=u\right. \left. \mbox{ \ \ \ \ \ \ \ \ \ \
\ for }t\geq 0,\right.  \label{71} \\
&&\left. y(0)=y_{0},\right. \left. \mbox{ \ \ \ \ \ \ \ \ \ \ \ \ \ \ \ \ \
\ \ \ \ \ \ \ \ \ in }\Omega ,\right.  \label{72} \\
&&\left. z=C_{1}y+D_{1}u,\right. \left. \mbox{ \ \ \ \ \ \ \ \ \ \ \ \ \ \ \
\ \ \ \ in }(0,\infty )\times \Omega ,\right.  \label{73}
\end{eqnarray}%
where $y_{0}\in L^{2}(\Omega ),$ $u\in \mathbb{R}.$

$\medskip $

$(i_{1})$ For this problem we choose $H=W=Z=L^{2}(\Omega ),$ $U=\mathbb{R},$%
\begin{equation}
B_{1}w=\chi _{\omega _{1}}(x)w,\mbox{ }C_{1}y=\chi _{\Omega _{C}}(x)y,\mbox{
\ }D_{1}u=d(x)u,\mbox{ }x\in \Omega ,  \label{73-2}
\end{equation}%
with the conditions $\omega _{1}\sqsubseteq \Omega ,$ $\Omega _{0}\subset
\Omega _{C},$ and 
\begin{equation}
d\in L^{2}(\Omega ),\mbox{ }d(x)=0\mbox{ on }\Omega _{C},\mbox{ }%
\int_{\Omega \backslash \Omega _{C}}d^{2}(x)dx=1.  \label{73-4}
\end{equation}%
Thus, $B_{1}\in L(L^{2}(\Omega ),L^{2}(\Omega )),$ $C_{1}\in L(L^{2}(\Omega
),L^{2}(\Omega ))$ and $D_{1}:U\rightarrow L^{2}(\Omega ).$

We deal again with the operator $A:D(A)\subset L^{2}(\Omega )\rightarrow
L^{2}(\Omega ),$ $Ay=\Delta y+\frac{\lambda y}{|x|^{2}},$ which is $\omega $-%
$m$-accretive on $L^{2}(\Omega )$ and generates a compact $C_{0}$-semigroup
on $L^{2}(\Omega ).$ The difference here is that in the calculus of the
accretivity of $-A$ we use the Hardy inequality (\ref{HN0}) instead of (\ref%
{HN}). Next, we define 
\begin{equation}
B:\mathbb{R\rightarrow R\times R}\mbox{, }Bu=(0,u)  \label{74}
\end{equation}%
and consider problem $\Delta \theta =0,$ $\theta =Bu$ on $\Gamma =\{0,1\}$
which provides the Dirichlet map $D_{0}u,$ associated to $\Delta $ and $Bu,$
expressed in this case by 
\begin{equation}
D_{0}u=ux.  \label{76}
\end{equation}%
Next, the problem%
\begin{equation}
\Delta Du+\frac{\lambda Du}{|x|^{2}}=0,\mbox{ }Du=Bu\mbox{ on }\Gamma ,
\label{77}
\end{equation}%
provides the Dirichlet map associated to $A_{0}$ defined in (\ref{47-2}).
Making the difference $\varphi =Du-D_{0}u$ we write the equation%
\begin{equation*}
\Delta \varphi +\frac{\lambda \varphi }{\left\vert x\right\vert ^{2}}=-\frac{%
\lambda u}{x},\mbox{ }\varphi =0\mbox{ on }\Gamma .
\end{equation*}%
By a similar calculus as in Lemma 5.1, where we note that in this case while
solving (\ref{51}) we have 
\begin{equation*}
\left( \frac{1}{2}-\frac{\lambda }{H_{N}}\right) \int_{\Omega }\left\vert
\nabla \varphi \right\vert ^{2}dx-\left\vert u\right\vert ^{2}\leq \Psi
(\varphi )<\infty ,
\end{equation*}%
we deduce that $\Psi $ has a minimum. Thus, we find that $\varphi \in
H_{0}^{1}(\Omega ),$ $\frac{\varphi }{x}\in L^{2}(\Omega )$ and 
\begin{equation}
Du=\varphi +ux\in H^{1}(\Omega ),\mbox{ }\frac{Du}{x}\in L^{2}(\Omega ).
\label{78}
\end{equation}

We define 
\begin{equation}
B_{2}:U=\mathbb{R}\rightarrow L^{2}(\Omega ),\mbox{ }B_{2}u=-\widetilde{A}%
Du+a(x)Du=-uA_{0}D(0,1)  \label{79}
\end{equation}%
where $\widetilde{A}$ is defined as in (\ref{54}) and $D(0,1)$ is the
Dirichlet map corresponding to the boundary data $y(t,0)=1,$ $y(t,1)=1.$
Then, $B_{2}^{\ast }:D(A)\rightarrow \mathbb{R}$ and Lemma 5.2 implies that 
\begin{equation}
B_{2}^{\ast }v=-v^{\prime }(1),\mbox{ }v\in D(A),\mbox{ }D^{\ast
}p=p^{\prime }(1),  \label{80}
\end{equation}%
where $D^{\ast }:L^{2}(\Omega )\rightarrow \mathbb{R}.$ We recall that $p$
is in $H^{2}$ in the neighborhood of the boundary $x=1.$

Hypotheses $(i_{2}),$ $(i_{3})$ and $(i_{4})$ are proved as in Section \ref%
{Boundary}.

Finally, we calculate the term $PB_{2}B_{2}^{\ast }P\varphi (x)$, the other
terms being the same as in the previous sections,%
\begin{equation*}
PB_{2}B_{2}^{\ast }P\varphi (x)=\int_{\Omega }\int_{\Omega }\frac{\partial
P_{0}}{\partial x}(1,\xi )\frac{\partial P_{0}}{\partial \xi }(x,1)\varphi
(\xi )d\xi
\end{equation*}%
and replacing in (\ref{15}) we get

\begin{theorem}
Let $\gamma >0$ and let $A,$ $B_{1},$ $C_{1}$ and $D_{1}$ be given by (\ref%
{45-1}) and (\ref{73-2}), respectively and $B_{2},$ $B_{2}^{\ast }$ be given
by (\ref{74}) and (\ref{80}). Assume that $P_{0}\in D(A)\times D(A)$ is a
solution to equation 
\begin{eqnarray}
&&\Delta _{x}P_{0}(x,\xi )+\Delta _{\xi }P_{0}(x,\xi )+\lambda P_{0}(x,\xi
)\left( \frac{1}{|x|^{2}}+\frac{1}{|\xi |^{2}}\right) +(a(x)+a(\xi
))P_{0}(x,\xi )  \notag \\
&&+\int_{\Omega }\frac{\partial P_{0}}{\partial x}(1,\xi )\frac{\partial
P_{0}}{\partial \xi }(x,1)d\xi +\gamma ^{-2}\int_{\Omega }\chi _{\omega
_{1}}(\overline{\xi })P_{0}(x,\overline{\xi })P_{0}(\overline{\xi },\xi )d%
\overline{\xi }  \label{81} \\
&=&-\delta (x-\xi )\chi _{\Omega _{C}}(\xi ),\mbox{ }(x,\xi )\in \Omega
\times \Omega ,  \notag
\end{eqnarray}%
with the boundary conditions $P_{0}(x,0)=P_{0}(x,1)=0$ for $x\in (0,1)$ and
by symmetry $P_{0}(0,\xi )=P_{0}(1,\xi )=0.$ Then, the feedback control $%
\widetilde{F}\in L(D(A),\mathbb{R}),$ 
\begin{equation}
\widetilde{F}y=\int_{\Omega }y(\xi )\frac{\partial P_{0}}{\partial x}(1,\xi
)d\xi ,\mbox{ }y\in L^{2}(\Omega )  \label{82}
\end{equation}%
solves the $H^{\infty }$-problem.
\end{theorem}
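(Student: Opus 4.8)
The plan is to identify equation (\ref{81}) as the kernel form of the abstract Riccati equation (\ref{15}) for the present data, and then to invoke the converse part of Theorem \ref{Th-main}. First I would check that hypotheses $(i_{1})$--$(i_{4})$ hold for the operators of (\ref{45-1}), (\ref{73-2}), (\ref{74}) and (\ref{80}): the operator $A$ is $\omega$-$m$-accretive and self-adjoint on $L^{2}(0,1)$ --- the only change with respect to Lemma~4.1 being the use of the one-dimensional Hardy inequality (\ref{HN0}) in place of (\ref{HN}) --- so it generates an analytic, compact $C_{0}$-semigroup, and $B_{2}$ maps $U=\mathbb{R}$ into $(D(A))'$; the operators $B_{2},B_{2}^{\ast}$ are those of (\ref{79})--(\ref{80}); and the $L^{1}$-admissibility estimate $(i_{2})$, the detectability estimate (\ref{12-0}) in $(i_{3})$ and the normalisation $(i_{4})$ are obtained exactly as in Section \ref{Boundary}, using that functions in $D(A)$ are $H^{2}$ in a neighbourhood of the regular endpoint $x=1$, so that $v'(1)$ makes sense as a real number for $v\in D(A)$.

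Next, given a symmetric nonnegative kernel $P_{0}\in D(A)\times D(A)$ solving (\ref{81}) with $P_{0}(x,0)=P_{0}(x,1)=0$, I would define $P$ on $L^{2}(\Omega)$ by the Schwartz kernel representation $P\varphi(x)=\int_{\Omega}P_{0}(x,\xi)\varphi(\xi)\,d\xi$. Symmetry of $P_{0}$ gives $P=P^{\ast}$, nonnegativity gives $P\geq0$, and $P_{0}\in D(A)\times D(A)$ together with the self-adjointness of $A$ yields $P\in L(H,H)\cap L(\mathcal{X},D(A^{\ast}))$ and $B_{2}^{\ast}P\in L(\mathcal{X},U)$; here the point is that for $\varphi\in\mathcal{X}$ the function $x\mapsto P\varphi(x)$ lies in $D(A)$, hence is $H^{2}$ near $x=1$, so that $B_{2}^{\ast}P\varphi=-\partial_{x}(P\varphi)(1)=-\int_{\Omega}\partial_{x}P_{0}(1,\xi)\varphi(\xi)\,d\xi$. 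Computing the kernels of $A^{\ast}P$, $PA$, $PB_{2}B_{2}^{\ast}P$ (as in the display preceding the statement), $PB_{1}B_{1}^{\ast}P$ and $C_{1}^{\ast}C_{1}$ (the last giving $\chi_{\Omega_{C}}(\xi)\delta(x-\xi)$) exactly as in Sections \ref{Distributed} and \ref{Boundary}, and substituting them into (\ref{15}), I would read off that the resulting identity in $\mathcal{D}'(\Omega\times\Omega)$ is precisely (\ref{81}), the boundary conditions encoding $P\varphi\in H^{1}_{0}(\Omega)$ and, by symmetry, $P^{\ast}\varphi\in H^{1}_{0}(\Omega)$. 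Thus $P$ solves (\ref{15}) with the properties (\ref{14-2}); moreover $\Lambda_{P}$ is closed here (since $A_{0}$ is closed and the remaining terms are of lower order), so $\mathcal{X}=D(\Lambda_{P})$ by Lemma \ref{operators}.

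Finally I would verify that $\Lambda_{P}=A-B_{2}B_{2}^{\ast}P+\gamma^{-2}B_{1}B_{1}^{\ast}P$ and $\Lambda_{P}^{1}=A-B_{2}B_{2}^{\ast}P$ generate exponentially stable semigroups on $H$: for $\Lambda_{P}^{1}$ this follows as in the proof of Theorem \ref{Th-main}, by testing $y'=\Lambda_{P}^{1}y$ with $Py$ and using (\ref{15}) to get that $t\mapsto(Py(t),y(t))_{H}$ is nonincreasing with $\int_{0}^{\infty}(\|C_{1}y(t)\|_{H}^{2}+\|B_{2}^{\ast}Py(t)\|_{U}^{2})\,dt<\infty$, after which the detectability representation $y'=(A+KC_{1})y-KC_{1}y-B_{2}B_{2}^{\ast}Py$ combined with (\ref{12-0}) and Young's inequality gives $y\in L^{2}(\mathbb{R}_{+};H)$, hence exponential stability by Datko's theorem; for $\Lambda_{P}$ one argues in the same spirit (this is the counterpart, in the present concrete setting, of the stability requirements imposed in the converse part of Theorem \ref{Th-main}). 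Then the converse part of Theorem \ref{Th-main} yields that $\widetilde{F}=-B_{2}^{\ast}P$ solves the $H^{\infty}$-problem, and since $\widetilde{F}y=-B_{2}^{\ast}Py=\partial_{x}(Py)(1)=\int_{\Omega}\partial_{x}P_{0}(1,\xi)y(\xi)\,d\xi$, this is exactly (\ref{82}). The main obstacle is the rigorous kernel computation of $B_{2}^{\ast}P$ and $PB_{2}B_{2}^{\ast}P$: because $B_{2}$ maps $U$ into $(D(A))'$ rather than $L^{2}(\Omega)$, one must justify that $P\varphi$ has a well-defined normal derivative at the regular endpoint $x=1$, which is where the $H^{2}$-near-the-boundary regularity of $D(A)$ and the $A_{0,\varepsilon}$-approximation used in Lemma~5.2 are essential.
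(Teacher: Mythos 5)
Your proposal is correct and follows essentially the same route as the paper, which itself only sketches this proof by reducing everything to the converse part of Theorem \ref{Th-main}: verify $(i_{1})$--$(i_{4})$ as in Section \ref{Boundary} with the one-dimensional Hardy inequality (\ref{HN0}) replacing (\ref{HN}), compute the kernel of $PB_{2}B_{2}^{\ast}P$ via $B_{2}^{\ast}v=-v'(1)$, and identify (\ref{81}) with the Riccati equation (\ref{15}). The one place you go beyond the paper is in attempting to \emph{derive} the exponential stability of $\Lambda_{P}$ and $\Lambda_{P}^{1}$ from the Riccati equation and detectability rather than treating it as an implicit hypothesis inherited from the converse part of Theorem \ref{Th-main}; that step is only sketched in your plan, but the overall strategy matches the paper's.
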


In this case 
\begin{equation*}
\Lambda _{P}y=A_{0}\left( y+\int_{\Omega }\frac{\partial P_{0}}{\partial x}%
(1,\xi )D(0,1)y(\xi )d\xi \right) +ay+\chi _{\omega _{1}}\int_{\Omega
}P_{0}(x,\xi )y(\xi )d\xi 
\end{equation*}%
which is closed, so that 
\begin{equation*}
\mathcal{X}=D(\Lambda _{P})=\left\{ y\in L^{2}(\Omega );\mbox{ }%
y+\int_{\Omega }\frac{\partial P_{0}}{\partial x}(1,\xi )D(0,1)y(\xi )d\xi
\in D(A)\right\} .
\end{equation*}

\bigskip

\noindent \textbf{Acknowledgment.} This work was supported by a grant of the Ministry of Research,
Innovation and Digitization, CNCS - UEFISCDI, project number
PN-III-P4-PCE-2021-0006, within PNCDI III.


\begin{thebibliography}{99}
\bibitem{Baras-Goldstein} {\small P. Baras, J.A. Goldstein, Remark on the
inverse square potential in quantum mechanics, North-Holland Math. Studies,
92 (C), 31-35, North-Holland, Amsterdam, 1984.}

\bibitem{VB-H-92} {\small V. Barbu, }$H_{\infty }${\small \ Boundary control
with state feedback; The hyperbolic case, International Series of Numerical
Mathematics, 107, 141-148, Birkh\"{a}user Verlag Basel, 1992.}

\bibitem{VB-H-95-hyp} {\small V. Barbu, }$H_{\infty }${\small \ Boundary
control with state feedback: The hyperbolic case, SIAM J. Control Optim. 33,
684-701, 1995.}

\bibitem{VB-H-95} {\small V. Barbu, The }$H_{\infty }${\small -problem for
infinite dimensional semilinear systems, SIAM J. Control Optim. 33,
1017-1027, 1995.}

\bibitem{VB-S} {\small V. Barbu, S. Sritharan, }$H^{\infty }${\small %
-control theory of fluid dynamics, Proc. R. Soc. Lond. Ser. A, 454,
3009-3033, 1998.}

\bibitem{VB-book-94} {\small V. Barbu, Mathematical Methods in Optimization
of Differential Systems, Kluwer Academic Publishers, Dordrecht, 1994.}

\bibitem{VB-book-2010} {\small V. Barbu, Nonlinear Differential Equations of
Monotone Type in Banach Spaces, Springer, New York, 2010.}

\bibitem{vbp} {\small V. Barbu, T. Precupanu, Convexity and Optimization in
Banach Spaces, Springer, Dordrecht, 2012.}

\bibitem{Bas-Ber} {\small T. Ba\c{s}ar, P. Bernhard, }$H^{\infty }${\small %
-Optimal Control and Related Minimax Design Problems. A Dynamic Game
Approach, 2nd Edition, Birkh\"{a}user, Boston, Basel, Berlin, 1995.}

\bibitem{Bebernes} {\small J. Bebernes, D. Eberly, Mathematical Problems
from Combustion Theory, Applied Math. Sci. 83, Springer-Verlag, New York,
1989.}

\bibitem{Brezis-Vazquez} {\small H. Brezis, J.L. V\'{a}zquez, BIow-up
solutions of some nonlinear elliptic problems, Rev. Mat. Complut. 10,
443-469, 1997.}

\bibitem{Brezis-Marcus} {\small H. Brezis, M. Marcus, Hardy's inequalities
revisited, Ann. Scuola Norm. Sup. Pisa Cl. Sci. (4), XXV, 217-237, 1997.}

\bibitem{Curtain-Zwart} {\small R. F. Curtain, H. J. Zwart, An Introduction
to Infinite Dimensional Linear Systems Theory, Springer, 1995.}

\bibitem{Datko} {\small R. Datko, Uniform asymptotic stability of
evolutionary processes in a Banach space, SIAM J. Math. Anal. 3, 428-445,
1972.}

\bibitem{Ervedoza} {\small S. Ervedoza, Control and stabilization properties
for a singular heat equation with an inverse-square potential, Comm. Partial
Diff. Equ. 33, 1996-2019, 2008.}

\bibitem{Fragnelli-Mugnai} {\small G. Fragnelli, D. Mugnai, Carleman
estimates for singular parabolic equations with interior degeneracy and non
smooth coeffcients, Advances in Nonlinear Analysis, 6, 61-84, 2017.}

\bibitem{Fujiwara} {\small D. Fujiwara, Concrete characterization of the
domains o fractional powers of some elliptic differential operators of the
second order, Proc. Japan Acad. 43, 82-86, 1967.}

\bibitem{Glover} {\small K. Glover, All optimal Hankel-norm approximations
of linear multivariable systems and their }$L_{\infty }${\small \ error
bounds, Internat. J. Control, 39(6):1115--1193, 1984.}

\bibitem{L-T} {\small I. Lasiecka, R. Triggiani, Control Theory for Partial
Differential Equations: Continuous and Approximation Theories, Volume 1:
Abstract Parabolic Systems, vol. 74 of Encyclopedia of Mathematics and its
Applications, Cambridge University Press, Cambridge, New York, 2000.}

\bibitem{Lions-control} {\small J. L. Lions, Optimal Control of Systems
Governed by Partial Differential Equations, Springer Berlin, Heidelberg,
1971.}

\bibitem{Opmeer-Staffans-2010} {\small M. R. Opmeer, O. J. Staffans, Optimal
input-output stabilization of infinite-dimensional discrete time-invariant
linear systems by output injection, SIAM J. Control Optimiz. 48, 5084-5107,
2010.}

\bibitem{Opmeer-Staffans-2019} {\small M. R. Opmeer, O. J. Staffans, Optimal
control on the doubly infinite time axis for well-posed linear systems, SIAM
J. Control Optimiz. 57, 1985-2015, 2019. }

\bibitem{Packard-Doyle} {\small A. Packard, M.K.H. Fan, J. Doyle. A power
method for the structured singular value. In Decision and Control,
Proceedings of the 27th IEEE Conference, 2132--2137, IEEE, 1988.}

\bibitem{Pazy} {\small A. Pazy, Semigroups of Linear Operators and
Applications to Partial Differential Equations, Springer-Verlag, New York,
1983.}

\bibitem{Pri-Tow} {\small A. J. Pritchard, S. Townley, Robustness
optimization for uncertain infinite dimensional systems with unbounded
inputs, IMA J. Math. Control Inform. 8, 121-133, 1991.}

\bibitem{Staffans} {\small O. J. Staffans, Well-Posed Linear Systems,
Cambridge University Press, Cambridge, 2005.}

\bibitem{Staffans-Weiss} {\small O. J. Staffans, G. Weiss, Transfer
functions of regular linear systems. Part III: Inversions and Duality,
Integral Equations and Operator Theory 49, 517-558, 2004.}

\bibitem{Tucsnak-Weiss} {\small M. Tucsnak, G. Weiss, Observation and
Control for Operator Semigroups, Birkh\"{a}user Verlag AG, 2009.}

\bibitem{Vancostenoble-Zuazua} {\small J. Vancostenoble, E. Zuazua, Hardy
inequalities, observability, and control for the wave and Schr\"{o}dinger
equations with singular potentials, SIAM J. Math. Anal. 41, 1508-1532, 2009.}

\bibitem{vanK-93} {\small B. Van Keulen, }$H^{\infty }${\small -Control for
Distributed Parameter Systems: A State-Space Approach, Boston, MA,
Birkhauser, 1993.}

\bibitem{vanK-al-93} {\small B. Van Keulen, M. A. Peters, R. Curtain, }$%
H^{\infty }${\small -control with state feedback: the infinite dimensional
case, J. Math. Syst. Estim. Control 3, 1-39, 1993.}

\bibitem{vanS-91} {\small A. J. van der Schaft, A state space approach to
nonlinear }$H^{\infty }${\small \ control. Syst. Cont. Lett. 16, 1-8, 1991.}

\bibitem{vaS-92} {\small A. J. van der Schaft, }$L_{2}${\small \ gain
analysis of nonlinear systems and nonlinear state feedback }$H^{\infty }$%
{\small \ control. IEEE Trans. Automat. Control 37, 770-784, 1992.}

\bibitem{Zames} {\small G. Zames, Feedback and optimal sensitivity: model
reference transformations, multiplicative seminorms and approximate
inverses. IEEE Trans. Automat. Control 26, 301-320, 1981.}
\end{thebibliography}
\end{document}